\title[The Canonical Quadratic Pair on Clifford Algebras over Schemes]{The Canonical Quadratic Pair on \\ Clifford Algebras over Schemes}
\newcounter{fonts}
\let\eeee\edef
\csname \Alph{fonts}\Alph{fonts}\endcsname{\noexpand\mathbb{\Alph{fonts}}}
\mathchardef\mdash="2D
\newtheorem{thm}{Theorem}[section]
\newtheorem{defn}[thm]{Definition}
\newtheorem{prop}[thm]{Proposition}
\newtheorem{cor}[thm]{Corollary}
\newtheorem{lem}[thm]{Lemma}
\newtheoremstyle{remarkstyle}{\topsep}{\topsep}{\rm}{}{\bfseries}{.}{.5em}{}
\theoremstyle{remarkstyle}
\newtheorem{rem}[thm]{Remark}
\newtheorem{ex}[thm]{Example}
\newtheoremstyle{solutionstyle}{\topsep}{\topsep}{\rm}{}{\it}{.}{.5em}{}
\theoremstyle{solutionstyle}
\newtheorem*{sol*}{Solution}
\newcommand{\und}{\underline{\hspace{2ex}}}
\newcommand{\iso}{\xrightarrow{\sim}}
\newcommand{\inj}{\hookrightarrow}
\newcommand{\surj}{\twoheadrightarrow}
\newcommand{\runity}{\boldsymbol{\mu}}
\newcommand{\GL}{\mathbf{GL}}
\newcommand{\PGL}{\mathbf{PGL}}
\newcommand{\PGO}{\mathbf{PGO}}
\newcommand{\slLie}{\mathfrak{sl}}
\DeclareMathOperator{\Grp}{\mathfrak{Grp}}
\DeclareMathOperator{\Ab}{\mathfrak{Ab}}
\DeclareMathOperator{\Sets}{\mathfrak{Sets}}
\DeclareMathOperator{\Rings}{\mathfrak{Rings}}
\DeclareMathOperator{\Sch}{\mathfrak{Sch}}
\DeclareMathOperator{\Aff}{\mathfrak{Aff}}
\newcommand{\cHom}{\mathcal{H}\hspace{-0.4ex}\textit{o\hspace{-0.2ex}m}}
\newcommand{\cEnd}{\mathcal{E}\hspace{-0.4ex}\textit{n\hspace{-0.2ex}d}}
\newcommand{\cAut}{\mathbf{Aut}}
\newcommand{\bAut}{\mathbf{Aut}}
\newcommand{\cSym}{\mathcal{S}\hspace{-0.5ex}\textit{y\hspace{-0.3ex}m}}
\newcommand{\cSymd}{\mathcal{S}\hspace{-0.5ex}\textit{y\hspace{-0.3ex}m\hspace{-0.2ex}d}}
\newcommand{\cSkew}{\mathcal{S}\hspace{-0.4ex}\textit{k\hspace{-0.2ex}e\hspace{-0.3ex}w}}
\newcommand{\cAlt}{\mathcal{A}\hspace{-0.1ex}\textit{$\ell$\hspace{-0.3ex}t}}
\newcommand{\cIsom}{\mathcal{I}\hspace{-0.35ex}\textit{s\hspace{-0.15ex}o\hspace{-0.15ex}m}}
\DeclareMathOperator{\Img}{Img}
\DeclareMathOperator{\Ker}{Ker}
\DeclareMathOperator{\Hom}{Hom}
\DeclareMathOperator{\Isom}{Isom}
\DeclareMathOperator{\Aut}{Aut}
\DeclareMathOperator{\Id}{Id}
\DeclareMathOperator{\Trd}{Trd}
\DeclareMathOperator{\Char}{char}
\DeclareMathOperator{\Spec}{Spec}
\DeclareMathOperator{\Inn}{Inn}
\DeclareMathOperator{\Sym}{Sym}
\DeclareMathOperator{\Mat}{M}
\newcommand{\op}{^\mathrm{op}}
\newcommand{\Cl}{\mathcal{C}\hspace{-0.3ex}\ell}
\begin{document}
\author[C. Ruether]{Cameron Ruether}
\address{Department of Mathematics and Statistics, Memorial University of Newfoundland, St. John's, NL, Canada, A1C 5S7}
\email{cameronruether@gmail.ca}

\thanks{This work was supported by the NSERC grants of Mikhail Kotchetov (Discovery Grant 2018-04883) and Yorck Sommerh\"auser (RGPIN-2017-06543), as well as by the AARMS collaborative research group ``Groups, Rings, Lie and Hopf Algebras" and the Atlantic Algebra Centre.}

\date{September 6, 2023.}

\maketitle

\noindent{\bf Abstract:} {Working over an arbitrary base scheme $S$, we define the canonical quadratic pair on the Clifford algebra associated to an Azumaya algebra with quadratic pair. Given an Azumaya algebra $\cA$ with quadratic pair, i.e., with an orthogonal involution and a semi-trace, its associated Clifford algebra's canonical involution is only orthogonal in certain cases, namely when $\deg(\cA)$ is divisible by $8$ or when both $2=0$ over $S$ and $\deg(\cA)$ is divisible by $4$. When $\deg(\cA) \geq 8$, our definition of the canonical quadratic pair on the Clifford algebra is extended from previous work of Dolphin and Qu\'eguiner-Mathieu, who worked over fields of characteristic $2$. When $\deg(\cA)=4$, we show that no canonical quadratic pair exists.}
\medskip

\noindent{\bf Keywords: Clifford Algebras, Quadratic Pairs, Quadratic Forms, Involutions, Azumaya Algebras.}\\
\medskip
\noindent {\em MSC 2020: Primary 15A66, 11E81. Secondary 14F20, 16H05, 16W10, 20G35.}
\bigskip

\section*{Introduction}
{%
\renewcommand{\thethm}{\Alph{thm}}
In The Book of Involutions (\cite{KMRT}), the final chapters are dedicated to the phenomenon of \emph{triality} which arises from the order three symmetries of the Dynkin diagram $D_4$. However, these final chapters assume that the base field is of characteristic different from $2$, an assumption which is avoided throughout the rest of the book. The assumption that the characteristic is not $2$ is common when studying quadratic forms and related objects, such as symmetric bilinear forms and central simple algebras with orthogonal involution, since in this case there is the well known bijective correspondence between quadratic forms and their polar symmetric bilinear forms. However, this correspondence does not hold when the characteristic is $2$, and so to handle this case the authors of \cite{KMRT} introduce and develop the notion of a \emph{quadratic pair}. Given a central simple algebra $A$ over a field $\FF$ of any characteristic, a quadratic pair on $A$ is $(\sigma,f)$ where $\sigma$ is an orthogonal involution and $f\colon \Sym(A,\sigma) \to \FF$ is a linear function satisfying $f(a+\sigma(a)) = \Trd_A(a)$ for all $a\in A$. Here $\Sym(A,\sigma)$ are the symmetric elements of $A$ and $\Trd_A$ is the reduced trace. Such functions $f$ are called \emph{semi-traces}. These pairs $(\sigma,f)$ play the role of orthogonal involutions in characteristic $2$, for example the group of algebra automorphism of $A$ which preserve $\sigma$ may fail to be smooth, but those which preserve $(\sigma,f)$ will be a smooth semisimple group of type $D$. 

The exposition of triality in \cite{KMRT} (assuming characteristic not $2$) relies on the fact that for a central simple algebra of degree $8$ with orthogonal involution $(A,\sigma)$ of trivial discriminant, the Clifford algebra decomposes as a Cartesian product $\mathrm{Cl}(A,\sigma) \cong (B,\sigma_B)\times(C,\sigma_C)$ of two central simple algebras of degree $8$, each with an orthogonal involution of trivial discriminant. However, when discussing triality in characteristic $2$, the algebra $A$ will have a quadratic pair, say $(A,\sigma,f)$, and the appropriate objects to appear on the right should be algebras with quadratic pairs as well, $(B,\sigma_B,f_B)$ and $(C,\sigma_C,f_C)$. Therefore, one should also have a quadratic pair on the Clifford algebra. Triality in characteristic $2$ was avoided by the authors of \cite{KMRT} since, in their words, ``we did not succeed in giving a rational definition of the quadratic pair on $\mathrm{Cl}(A,\sigma,f)$" \cite[p. 571]{KMRT}. Recently in \cite{DQ21}, Dolphin and Qu\'eguiner-Mathieu gave a rational definition of the canonical quadratic pair on the Clifford algebra when working over a field of characteristic $2$. Their construction uses an element $a\in A$ with $\Trd_A(a)=1$ and defines the quadratic pair using the image of $a$ under the canonical mapping $c \colon A \to \mathrm{Cl}(A,\sigma,f)$. 

In this paper, we generalize the construction of \cite{DQ21} to define the canonical quadratic pair on the Clifford algebra when working over an arbitrary base scheme $S$. The notion of quadratic pair was generalized to the setting over a scheme in \cite{CF}, and then further studied in \cite{GNR}. In this setting we work primarily with sheaves on the category $\Sch_S$ of schemes over $S$ equipped with the fppf topology. The role of the base field is played by the sheaf of global sections $\cO$, which sends a scheme $T \mapsto \cO_T(T)$, and central simple algebras are replaced by Azumaya algebras, which are sheaves locally isomorphic to finite rank matrix algebras over $\cO$. Unlike in the setting over fields, or even over rings, an Azumaya algebra $\cA$ may fail to have a global section $a\in \cA(S)$ for which $\Trd_{\cA}(a) = 1$, and therefore some modification of the definition from \cite{DQ21} is needed. 

Let $(\cA,\sigma,f)$ be an Azumaya algebra of even rank $2n$ with a quadratic pair. As we review in the preliminaries, the Clifford algebra $\Cl(\cA,\sigma,f)$, now denoted in cursive since it is also a sheaf, has a canonical involution $\underline{\sigma}$ which is orthogonal if and only if $n \equiv 0 \pmod{4}$ or both $n \equiv 2 \pmod{4}$ and $2=0 \in \cO(S)$. Assume we are in one of these cases. Let
\begin{align*}
\cSymd_{(\Cl(\cA,\sigma,f),\underline{\sigma})} &= \Img(\Id+\underline{\sigma}) \subseteq \Cl(\cA,\sigma,f) \text{, and}\\
\cAlt_{(\Cl(\cA,\sigma,f),\underline{\sigma})} &= \Img(\Id - \underline{\sigma}) \subseteq \Cl(\cA,\sigma,f)
\end{align*}
be the image subsheaves of symmetrized and alternating elements respectively. Since alternating elements are skew-symmetric, we have a commutative diagram
\[
\begin{tikzcd}
\Cl(\cA,\sigma,f) \arrow{r}{\Id + \underline{\sigma}} \arrow{d} & \cSymd_{(\Cl(\cA,\sigma,f),\underline{\sigma})} \arrow[equals]{d} \\
\Cl(\cA,\sigma,f)/\cAlt_{(\Cl(\cA,\sigma,f),\underline{\sigma})} \arrow{r}{\xi} & \cSymd_{(\Cl(\cA,\sigma,f),\underline{\sigma})}
\end{tikzcd}
\]
where $\xi$ is the induced map. We use \cite[4.17(b)]{GNR} which states that a quadratic pair $(\underline{\sigma},f_{\ell})$ may be constructed from any $\ell \in (\Cl(\cA,\sigma,f)/\cAlt_{(\Cl(\cA,\sigma,f),\underline{\sigma})})(S)$ for which $\xi(\ell)=1_{\cA}$. Our construction of the canonical quadratic pair is as follows.

\begin{thm}\label{intro_thm_construction}
Let $(\cA,\sigma,f)$ be an Azumaya algebra of degree $2n$ with quadratic pair such that $(\Cl(\cA,\sigma,f),\underline{\sigma})$ is an algebra with orthogonal involution. Assume that $n\geq 4$. Then, there is a commutative diagram
\[
\begin{tikzcd}[column sep=3ex]
0 \arrow{r} & \slLie_{\cA} \arrow[hookrightarrow]{r} \arrow{d}{c} & \cA \arrow{r}{\Trd_{\cA}} \arrow{d}{c} & \cO \arrow{d}{\rho} \arrow{r} & 0 \\
0 \arrow{r} & \cAlt_{(\Cl(\cA,\sigma,f),\underline{\sigma})} \arrow[hookrightarrow]{r} & \Cl(\cA,\sigma,f) \arrow{r} & \Cl(\cA,\sigma,f)/\cAlt_{(\Cl(\cA,\sigma,f),\underline{\sigma})} \arrow{r} & 0
\end{tikzcd}
\]
where $\slLie_{\cA}$ is the submodule of trace $0$ sections and $c\colon \cA \to \Cl(\cA,\sigma,f)$ is the canonical mapping. The element $\rho(1_{\cO})$ satisfies $\xi(\rho(1_{\cO})) = 1_{\cA}$ and therefore we define $\underline{f}=f_{\rho(1_{\cO})}$ which gives the \emph{canonical quadratic pair} on the Clifford algebra $(\Cl(\cA,\sigma,f),\underline{\sigma},\underline{f})$.

The canonical quadratic pair has the property that an isomorphism of algebras with quadratic pairs $(\cA,\sigma,f) \iso (\cA',\sigma',f')$ induces an isomorphism of Clifford algebras with canonical quadratic pairs
\[
(\Cl(\cA,\sigma,f),\underline{\sigma},\underline{f}) \iso (\Cl(\cA',\sigma',f'),\underline{\sigma'},\underline{f'}).
\]

Furthermore, the construction is stable under base change and therefore the canonical group homomorphism $\PGO_{(\cA,\sigma,f)} \to \bAut(\Cl(\cA,\sigma,f),\underline{\sigma})$ factors through $\bAut(\Cl(\cA,\sigma,f),\underline{\sigma},\underline{f})$, yielding a homomorphism
\[
\PGO_{(\cA,\sigma,f)} \to \bAut(\Cl(\cA,\sigma,f),\underline{\sigma},\underline{f}).
\]
\end{thm}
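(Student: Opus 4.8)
The plan is to produce the commutative diagram by checking that the canonical mapping $c\colon \cA \to \Cl(\cA,\sigma,f)$ carries trace-zero sections into the alternating subsheaf $\cAlt_{(\Cl(\cA,\sigma,f),\underline{\sigma})}$, and that the two horizontal sequences are exact, so that the snake-type argument produces the dotted arrow $\rho$ on the quotients. Since everything in sight is a sheaf on $\Sch_S\fppf$, it suffices to verify these statements after a faithfully flat base change, hence we may assume $\cA = \Mat_{2n}(\cO_T)$ is split with a hyperbolic quadratic pair over some affine $T = \Spec R$; the functoriality and base-change claims at the end of the theorem are then automatic once the construction is shown to be canonical. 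First I would recall from the preliminaries that $\slLie_{\cA} = \Ker(\Trd_{\cA})$ is exactly the trace-zero submodule and that $0 \to \slLie_{\cA} \to \cA \xrightarrow{\Trd_{\cA}} \cO \to 0$ is exact (this uses $\Trd_{\cA}$ being a split surjection locally, since $e_{11}$ has reduced trace $1$ in the split model, even though no such global section need exist). The bottom row is exact by the very definition of $\cSymd$, $\cAlt$ and the quotient. What must be checked is that $c(\slLie_{\cA}) \subseteq \cAlt_{(\Cl(\cA,\sigma,f),\underline{\sigma})}$, i.e. that for a trace-zero section $a$ the element $c(a)$ lies in $\Img(\Id - \underline{\sigma})$.

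The key computation is the behaviour of $\underline{\sigma}$ on the image of $c$. I would invoke the standard identity $\underline{\sigma}(c(a)) = c(\sigma(a))$ on the even Clifford algebra together with the description of $c(a) + c(\sigma(a))$ and $c(a) - c(\sigma(a))$ in terms of $\Trd_{\cA}(a)$ and the canonical element $c(1_{\cA})$, as developed in \cite{DQ21} over a field of characteristic $2$ and recalled in the preliminaries here over $S$. Concretely, for a section $a$ with $\sigma(a) = a$ one has $c(a) \in \cSymd$ and the semi-trace condition pins down $c(a) + \underline{\sigma}(c(a))$; polarizing, for arbitrary $a$ one gets a formula for $c(a) - \underline{\sigma}(c(a))$ whose right-hand side is controlled by $\Trd_{\cA}(a)$ times $c(1_{\cA})$ plus alternating terms. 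Since $\Id - \underline{\sigma}$ lands in $\cAlt$ by definition, the point is to see that when $\Trd_{\cA}(a) = 0$ the element $c(a)$ itself — not merely $c(a) - \underline{\sigma}(c(a))$ — is alternating; this is where the hypotheses $n \geq 4$ and the orthogonality of $\underline{\sigma}$ (so $n \equiv 0 \bmod 4$, or $n \equiv 2 \bmod 4$ and $2 = 0$) enter, guaranteeing that $c(1_{\cA})$ is not a unit obstruction and that the relevant degree count $\binom{2n}{2}$ versus $\dim \Cl$ behaves correctly. I expect this verification — matching the parity/divisibility bookkeeping from the split model against the abstract statement — to be the main obstacle, and it is essentially the content of the lemma whose proof the author is about to give.

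Granting $c(\slLie_{\cA}) \subseteq \cAlt$, the left square of the diagram commutes by construction and the middle square is the tautology $c = c$, so there is a unique induced map $\rho\colon \cO \to \Cl(\cA,\sigma,f)/\cAlt_{(\Cl(\cA,\sigma,f),\underline{\sigma})}$ making the right square commute. Chasing $1_{\cO}$ through the diagram: choose fppf-locally a section $a$ with $\Trd_{\cA}(a) = 1$, then $\rho(1_{\cO})$ is the class of $c(a)$, and $\xi(\rho(1_{\cO}))$ is the image of $c(a)$ under $\Id + \underline{\sigma}$ read in $\cSymd$, which by the semi-trace identity for the quadratic pair on $\cA$ equals $\Trd_{\cA}(a)\cdot 1_{\cA} = 1_{\cA}$; independence of the local choice of $a$ follows because two such sections differ by a trace-zero section, whose $c$-image is alternating, so they give the same class — hence $\rho$ and $\rho(1_{\cO})$ are genuinely global and canonical. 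Then \cite[4.17(b)]{GNR} applies to $\ell = \rho(1_{\cO})$ and yields the quadratic pair $(\underline{\sigma}, f_{\rho(1_{\cO})})$; we set $\underline{f} = f_{\rho(1_{\cO})}$. The functoriality under isomorphisms $(\cA,\sigma,f) \iso (\cA',\sigma',f')$ is immediate from the naturality of $c$, $\Trd$, $\underline{\sigma}$ and the formation of $\cAlt$, which together force the induced isomorphism $\Cl(\cA,\sigma,f) \iso \Cl(\cA',\sigma',f')$ to carry $\rho(1_{\cO})$ to $\rho'(1_{\cO'})$ and hence $\underline{f}$ to $\underline{f'}$; the same naturality against pullback along $S' \to S$ gives stability under base change. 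Finally, $\PGO_{(\cA,\sigma,f)}$ is by definition the automorphism sheaf of $(\cA,\sigma,f)$, so the previous paragraph shows every such automorphism preserves $\underline{f}$ in addition to $\underline{\sigma}$; therefore the canonical map $\PGO_{(\cA,\sigma,f)} \to \bAut(\Cl(\cA,\sigma,f),\underline{\sigma})$ factors through the subgroup sheaf $\bAut(\Cl(\cA,\sigma,f),\underline{\sigma},\underline{f})$, which is the asserted homomorphism.
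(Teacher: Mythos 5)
Your scaffolding matches the paper's: reduce the containment statement to the split hyperbolic model, use $c(\slLie_{\cA})\subseteq\cAlt_{(\Cl(\cA,\sigma,f),\underline{\sigma})}$ to induce $\rho$ on the quotients of the two exact rows, verify $\xi(\rho(1_{\cO}))=1$ locally from $c(a)+\underline{\sigma}(c(a))=c(a+\sigma(a))=f(a+\sigma(a))\cdot 1=\Trd_{\cA}(a)\cdot 1$ for a local trace-one section $a$ (this is exactly \Cref{rho_xi_compatible}), invoke \cite[4.17(b)]{GNR} to get $\underline{f}=f_{\rho(1_{\cO})}$, and deduce functoriality, base-change stability and the factorization of $\PGO_{(\cA,\sigma,f)}$ from naturality of the construction (the paper argues base change and functoriality instead via the local description by trace-one sections, \Cref{canonical_f_local}--\Cref{canonical_group_sheaf_morphism}, but your naturality argument is acceptable).

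The genuine gap is the containment itself, which is the technical heart of the theorem (the paper's \Cref{sl_into_Alt}): you explicitly leave it unproved (``essentially the content of the lemma whose proof the author is about to give''), and the route you sketch would not establish it. Polarizing the semi-trace identity only gives information about $c(a)-\underline{\sigma}(c(a))$, which lies in $\cAlt_{(\Cl(\cA,\sigma,f),\underline{\sigma})}$ tautologically; what must be shown is that $c(a)$ itself is alternating whenever $\Trd_{\cA}(a)=0$, and no ``parity/divisibility bookkeeping'' or comparison of $\binom{2n}{2}$ with the rank of the Clifford algebra produces that. The paper's argument is an explicit computation in the split model: writing a trace-zero basis element of $\HH(\cV)\otimes_{\cO}\HH(\cV)$ as $x\otimes y$ (or as $x\otimes x^* - y\otimes y^*$), one chooses an index $k$ avoiding the indices occurring in $x$ and $y$ and checks, using $v_kv_k^*=1-v_k^*v_k$ and the anticommutation relations, that $xy = xyv_kv_k^* - \underline{\sigma}(xyv_kv_k^*)$, exhibiting $c(x\otimes y)$ as an alternating element; the existence of such a $k$ is precisely where the degree hypothesis enters ($n\geq 3$ suffices for the lemma). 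Relatedly, you misattribute the role of the hypotheses: orthogonality of $\underline{\sigma}$ plays no part in the containment (which holds for all $n\geq 3$ and genuinely fails for $n=2$, see \Cref{degree_4_trace_remark}); it is needed only so that the class $\rho(1_{\cO})$ can be converted into a semi-trace at all. Without the explicit verification above, the dotted arrow $\rho$ is not defined and the rest of your argument has nothing to stand on.
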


\Cref{intro_thm_construction} covers all cases when $\underline{\sigma}$ is orthogonal except the case when $n=2$ and $2=0 \in \cO(S)$. We also address this case, however we show that no canonical quadratic pair exists.
\begin{thm}\label{intro_thm_degree_4}
Assume that $2=0 \in \cO(S)$ and let $(\cA,\sigma,f)$ be an Azumaya algebra of degree $4$ with quadratic pair. Then, $(\Cl(\cA,\sigma,f),\underline{\sigma})$ is an algebra with orthogonal involution, however it cannot be equipped with a canonical quadratic pair. By which we mean that, for any choice of semi-trace $f' \colon \cSym_{(\Cl(\cA,\sigma,f),\underline{\sigma})} \to \cO$, the canonical group homomorphism $\PGO_{(\cA,\sigma,f)} \to \bAut(\Cl(\cA,\sigma,f),\underline{\sigma})$ does not factor through $\bAut(\Cl(\cA,\sigma,f),\underline{\sigma},f')$.
\end{thm}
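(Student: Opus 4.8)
The plan is to work étale-locally over $S$ and reduce the non-existence statement to a concrete computation about the degree-$4$ split case, then promote the obstruction to a cohomological one. First I would pass to a suitable fppf (in fact étale) cover $T \to S$ over which $(\cA,\sigma,f)$ becomes split, so that $\cA_T \cong \Mat_4(\cO_T)$ with the hyperbolic quadratic pair; here one can compute $\Cl(\cA_T,\sigma_T,f_T)$ explicitly. Since $n=2$, we are in the regime where $\underline\sigma$ is orthogonal precisely because $2=0$, and the split Clifford algebra decomposes as a product $\Cl(\cA,\sigma,f)_T \cong \cC_1 \times \cC_2$ of two Azumaya algebras of degree $2$ (rank $4$), each carrying the restriction of $\underline\sigma$. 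The key structural point to establish is that on this product, the candidate semi-traces do \emph{not} form a torsor under the additive group one expects, or more precisely that the set of semi-traces compatible with the $\PGO$-action is empty. The obstruction I would isolate: a quadratic pair on a rank-$4$ algebra forces the discriminant/Arf-type invariant to take a fixed value, and the two factors $\cC_1,\cC_2$ get swapped (or acted on) by a part of $\PGO_{(\cA,\sigma,f)}$ that is incompatible with any single choice of $f'$.

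The main steps, in order, would be: \textbf{(1)} Show $(\Cl(\cA,\sigma,f),\underline\sigma)$ is genuinely an algebra with orthogonal involution when $2=0$ and $\deg\cA=4$ — this is already asserted in the excerpt's discussion of when $\underline\sigma$ is orthogonal (the case $n\equiv 2\pmod 4$ with $2=0$), so it may be quoted. \textbf{(2)} Using \cite[4.17(b)]{GNR} in reverse: a semi-trace $f'$ on $(\Cl(\cA,\sigma,f),\underline\sigma)$ corresponds to a section $\ell \in \big(\Cl(\cA,\sigma,f)/\cAlt_{(\Cl(\cA,\sigma,f),\underline\sigma)}\big)(S)$ with $\xi(\ell)=1$, and such $\ell$ exists locally; the question of a \emph{canonical} one is whether it can be chosen $\PGO$-equivariantly, equivalently whether the $\PGO_{(\cA,\sigma,f)}$-action on the (nonempty, affine-space) sheaf of such $\ell$ has a global fixed section after descent. \textbf{(3)} Run the degree-$4$ local computation: identify $\Cl(\cA,\sigma,f)/\cAlt$ and the affine-$\cO$-space $\xi^{-1}(1)$ explicitly, compute the $\PGO$-action, and exhibit an element $g \in \PGO_{(\cA,\sigma,f)}(S)$ (or, working locally, a section of the group sheaf) whose action on $\xi^{-1}(1)$ has no fixed point — this is where the degree-$4$ obstruction, invisible for $n\ge 4$ because there $\slLie_\cA \to \cAlt$ is surjective enough to split things off, actually bites. \textbf{(4)} Conclude: if $\PGO_{(\cA,\sigma,f)} \to \bAut(\Cl,\underline\sigma)$ factored through $\bAut(\Cl,\underline\sigma,f')$, then $g$ would fix $f'$, hence fix the corresponding $\ell$, contradicting step (3); since this holds even for the split algebra over a base with $2=0$ (e.g. $S=\Spec\FF_2$), it holds in general.

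The heart of the matter — and the main obstacle — is step (3): producing the explicit automorphism witnessing non-factorization. The conceptual reason it should work is exactly the failure in \Cref{intro_thm_construction} when $n=2$: the reduced trace sequence $0 \to \slLie_\cA \to \cA \to \cO \to 0$ no longer maps to the sequence $0 \to \cAlt \to \Cl \to \Cl/\cAlt \to 0$ in a way that produces a well-defined $\rho$, because in degree $4$ the canonical map $c$ interacts with $\cAlt_{(\Cl,\underline\sigma)}$ differently — dimension count: $\dim\Cl = 2^{2n-1} = 8$, so $\Cl$ is too small for the averaging trick, and the relevant cokernel $\cO$ maps into something where the image of $\slLie_\cA$ is not contained in $\cAlt$. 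I would make this precise by computing, in the split model, that $\xi^{-1}(1)$ is a torsor under a sheaf on which $\PGO_{(\cA,\sigma,f)}$ acts through a nontrivial character or swap, so no fixed point exists; the cleanest route is probably to reduce to the center of $\Cl$ and observe that $\PGO$ acts on the two primitive central idempotents by a possibly-nontrivial $\ZZ/2$-action (the discriminant map), under which no semi-trace on a single factor is preserved. The routine verification that this $\ZZ/2$-action is indeed nontrivial for a suitable $(\cA,\sigma,f)$ — and hence that even the "naïve" candidates fail — is the one computation I would actually carry out in full.
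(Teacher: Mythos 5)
Your overall framework is the same as the paper's (translate a semi-trace $f'$ into a class $\ell$ with $\xi(\ell)=1$, restrict to a splitting cover, and exhibit a section of $\PGO_4$ that moves every such class modulo alternating elements), but the step you yourself identify as the heart of the matter, step (3), is exactly the part you have not done, and the route you say you would actually carry out would not prove the theorem. Your proposed ``cleanest route'' is to look at the action of $\PGO_{(\cA,\sigma,f)}$ on the two primitive central idempotents of $\Cl(\cA,\sigma,f)$ and argue that a nontrivial $\ZZ/2$ (discriminant) swap is incompatible with any $f'$. That mechanism is insufficient: a semi-trace that is symmetric in the two factors can perfectly well be invariant under the swap, so no contradiction is obtained for \emph{all} $f'$, which is what the statement requires. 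The genuine obstruction in degree $4$ is different and is invisible at the level of the center: the paper first computes (Lemma \ref{degree_4_Alt}) that $\cAlt_{(\Cl_0(\HH(\cV),q_4),\underline{\sigma})}$ is only the rank-$2$ submodule $\{a_0+a_1(v_1v_1^*+v_2v_2^*)\}$, and then (Lemma \ref{no_stable_l}) exhibits an explicit \emph{unipotent} $B\in\bO_4(S)$ (acting trivially on the center, so untouched by any discriminant consideration) such that for every $\ell$ with $\ell+\underline{\sigma}(\ell)=1$ the difference $\ell-\Cl(\Inn(B))(\ell)$ has a $v_1v_2^*$-component $t+t^2a_5$ that cannot lie in this small $\cAlt$ unless $t^2=t$. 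So the fixed-point failure comes from the connected part of the group and the smallness of $\cAlt$, not from the swap of factors.

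A second, smaller gap is your closing reduction ``since this holds even for the split algebra over $S=\Spec\FF_2$, it holds in general.'' Over $\FF_2$ (or any Boolean ring of sections) every scalar satisfies $t^2=t$, so the explicit witness above does not exist over the base itself; the paper handles this by restricting to an affine splitting $U$ and, if necessary, passing to the faithfully flat extension $U'=\Spec(\cO(U)[x]/\langle x^2-1\rangle)$, where a non-idempotent section exists, and using that factorization of the sheaf homomorphism must persist over every $T\in\Sch_S$ together with the affine description $f'|_U=\Trd_{\Cl_0}(\ell'\,\und)$ from \cite[4.12]{GNR}. Your argument needs both of these ingredients made explicit: the concrete computation of $\cAlt$ and of the action of a specific unipotent element, and the base-change maneuver that guarantees a scalar $t$ with $t^2\neq t$.
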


The paper is arranged as follows. \Cref{prelim} reviews the technicalities of our setting over a scheme and reviews the definitions and basic properties of the objects we will work with. In particular, \Cref{prelim_Clifford} contains a detailed description of the split Clifford algebra in \Cref{split_Clifford}, which we use later for explicit calculations. This example also includes a description of a natural quadratic form $q_\wedge$ whose adjoint involution is the canonical involution of the Clifford algebra. \Cref{Clifford_triples} contains the main results. The many parts of \Cref{intro_thm_construction} are found throughout \Cref{A_degree_8}. \Cref{sl_into_Alt} shows that $c\colon A \to \Cl(\cA,\sigma,f)$ restricts to $c\colon \slLie_{\cA} \to \cAlt_{(\Cl(\cA,\sigma,f),\underline{\sigma})}$, the diagram \eqref{eq_rho_diagram} defines $\rho$, \Cref{rho_xi_compatible} shows that $\xi(\rho(1_{\cO}))=1_{\cA}$, and the final claims about isomorphisms and the group homomorphism are \Cref{canonical_functorial} and \Cref{canonical_group_sheaf_morphism} respectively. We also show in \Cref{canonical_Phi_correspondence} that in the split case the canonical quadratic pair corresponds to the natural quadratic form $q_\wedge$ of \Cref{split_Clifford}. Finally, the short \Cref{A_degree_4} contains some preliminary lemmas involving calculations with the split Clifford algebra before proving \Cref{intro_thm_degree_4}, which is \Cref{degree_4_no_canonical}.
}

\section{Preliminaries}\label{prelim}
\subsection{Setting over a Scheme}
We work over an arbitrary fixed base scheme $S$. Most objects will be sheaves on the category of schemes over $S$, $\Sch_S$, equipped with the fppf topology. In particular, we work with a big fppf site as in \cite[Tag 021R]{Stacks}. If $\{T_i \to T\}_{i\in I}$ is an fppf cover, we will use the notation $T_{ij}=T_i \times_T T_j$. We also refer to $\Aff_S$, the big affine fppf site over $S$ of \cite[Tag 021S (2)]{Stacks}, commonly denoting affine schemes with $U$ or $V$. Fixing some notation, we let
\begin{enumerate}[label={\rm(\roman*)}]
\item $\Sets$ be the category of sets,
\item $\Grp$ be the category of groups,
\item $\Ab$ be the category of abelian groups, and
\item $\Rings$ be the category of unital commutative associative rings.
\end{enumerate}
If $\cF$ is a sheaf on $\Sch_S$, by abuse of notation we will refer to ``a section $f\in \cF$" by which we mean a section $f\in \cF(T)$ for some $T\in \Sch_S$. Similarly, a phrase like ``for appropriate sections $f \in \cF$ and $f'\in \cF'$" will mean that $f \in \cF(T)$ and $f'\in \cF'(T)$ are both sections over some common $T \in \Sch_S$. If $T\in \Sch_S$, then $\cF|_T$ will be the sheaf $\cF$ restricted to the site $\Sch_T$. If $T'\to T$ is a morphism in $\Sch_S$ and $f \in \cF(T)$, then $f|_{T'}$ will denote the image of $f$ under the restriction map $\cF(T) \to \cF(T')$.

When working with modules and algebras, they will be with respect to the \emph{structure sheaf} of \cite[Tag 03DU]{Stacks},
\begin{align*}
\cO \colon \Sch_S &\to \Rings \\
T &\mapsto \Gamma(T,\cO_T),
\end{align*}
which sends a scheme to its ring of global sections. We then work on the ringed site $(\Sch_S,\cO)$ with $\cO$--modules as in \cite[Tag 03CW]{Stacks} or with $\cO$--algebras, which are $\cO$--modules equipped with a $\cO$--bilinear multiplication. 

Given two $\cO$--modules $\cM_1$ and $\cM_2$, their sheaf of internal homomorphisms
\begin{align*}
\cHom_{\cO}(\cM_1,\cM_2) \colon \Sch_S &\mapsto \Ab \\
T &\mapsto \Hom_{\cO|_T}(\cM_1|_T,\cM_2|_T)
\end{align*}
is again an $\cO$--module. We write $\cEnd_{\cO}(\cM) = \cHom_{\cO}(\cM,\cM)$, and for the subsheaf of automorphisms we write $\bAut_{\cO}(\cM)$. For an $\cO$--module $\cM$, its \emph{dual} is $\cM^* = \cHom_{\cO}(\cM,\cO)$.

Following \cite[Tag 03DL (2)]{Stacks} and applying \cite[Tag 03DN]{Stacks} because $S\in \Sch_S$ is a final object, we call an $\cO$--module $\cM$ \emph{finite locally free} if there exists an fppf cover $\{T_i \to S\}_{i\in I}$ such that for each $i\in I$ there is an isomorphism $\cM|_{T_i}\cong \cO^{n_i}$ for some $n_i \in \NN$. We say $\cM$ is of \emph{everywhere positive rank} if all $n_i > 0$, and we say it is of \emph{constant rank $d$} if all $n_i = d$. If a module is both finite locally free and everywhere of positive rank we will call it positive finite locally free.

Let $\cA$ be an $\cO$--algebra and let $\underline{\cA}$ be the underlying $\cO$--module. The algebra $\cA$ is an \emph{Azumaya $\cO$--algebra} (or simply \emph{Azumaya algebra}) if it is positive finite locally free and satisfies the following equivalent conditions.
\begin{enumerate}[label={\rm(\roman*)}]
\item The enveloping morphism
\begin{align}
\varphi_e \colon \cA \otimes_{\cO} \cA\op &\iso \cEnd_{\cO}(\underline{\cA}) \label{eq_enveloping} \\
a\otimes b\op &\mapsto (x \mapsto axb) \nonumber
\end{align}
is an isomorphism of $\cO$--algebras.
\item For any affine scheme $U\in \Aff_S$, $\cA(U)$ is an Azumaya $\cO(U)$--algebra.
\end{enumerate}
These conditions come from \cite[5.1]{Gro} and \cite[2.5.3.4]{CF} respectively. For Azumaya algebras over rings we refer to \cite{Ford}. Since Azumaya algebras over rings become matrix algebras after faithfully flat extension, we also have the following conditions equivalent to the first two.
\begin{enumerate}[label={\rm(\roman*)}]
\setcounter{enumi}{2}
\item There exists a cover $\{T_i \to S\}_{i\in I}$ such that for each $i\in I$, $\cA|_{T_i} \cong \cEnd_{\cO|_{T_i}}(\cM_i)$ for some $\cO|_{T_i}$--module $\cM_i$ which is positive finite locally free.
\item There exists a cover $\{T_i \to S\}_{i\in I}$ such that for each $i\in I$, $\cA|_{T_i} \cong \Mat_{n_i}(\cO|_{T_i})$ for some $n_i>0$.
\end{enumerate}

An Azumaya $\cO$--algebra $\cA$ is equipped with a linear map $\Trd_{\cA} \colon \cA \to \cO$ called the \emph{reduced trace}, which is locally the trace of matrix algebras. We set $\slLie_{\cA} = \Ker(\Trd_{\cA})$ to be the submodule of trace zero sections.

Let $\cB$ be an $\cO$--algebra which is locally a finite cartesian product of Azumaya algebras. In particular, there exists a cover $\{T_i \to S\}_{i\in I}$ such that
\[
\cB|_{T_i} \cong \cA_{i,1}\times \ldots \times \cA_{i,n}
\]
for Azumaya $\cO|_{T_i}$--algebras $\cA_{i,1},\ldots,\cA_{i,n}$. The local linear maps
\[
\Trd_{\cA_{i,1}} + \ldots + \Trd_{\cA_{i,n}} \colon \cA_{i,1}\times \ldots \times \cA_{i,n} \to \cO|_{T_i}
\]
will glue into a linear map denoted $\Trd_{\cB} \colon \cB \to \cO$ and also called the reduced trace of $\cB$.

\subsection{Quadratic Forms and Quadratic Triples}\label{quad_forms_and_triples}
For the theory of quadratic forms over rings we refer to \cite{Knus}. For quadratic forms and quadratic triples over schemes we refer to \cite{CF} and \cite{GNR}.

A \emph{quadratic form} is a natural transformation $q\colon \cM \to \cO$ from an $\cO$--module $\cM$ such that
\begin{enumerate}[label={\rm(\roman*)}]
\item $q(cm)=c^2q(m)$ for all appropriate sections $c\in \cO$ and $m\in \cM$, and
\item the polar form $b_q(m_1,m_2)=q(m_1+m_2)-q(m_1)-q(m_2)$ is an $\cO$--bilinear form $b_q \colon \cM \times \cM \to \cO$.
\end{enumerate}
The pair $(\cM,q)$ may also be called a \emph{quadratic module}.

A bilinear form $b\colon \cM \times \cM \to \cO$ is called \emph{regular} if the map
\begin{align*}
b^* \colon \cM &\to \cM^* \\
m &\mapsto b(m,\und)
\end{align*}
is an isomorphism. A quadratic form $q$ is called \emph{regular} if $b_q$ is regular. A regular bilinear form has an adjoint involution $\sigma \colon \cEnd_{\cO}(\cM) \to \cEnd_{\cO}(\cM)$ defined uniquely by requiring that
\[
b(m_1,B(m_2)) = b(\sigma(B)(m_1),m_2)
\]
for all appropriate sections $m_1,m_2 \in \cM$ and $B\in \cEnd_{\cO}(\cM)$. A regular bilinear form also induces an isomorphism
\begin{align}
\varphi_b \colon \cM\otimes_{\cO}\cM &\iso \cEnd_{\cO}(\cM) \label{eq_bilinear_iso} \\
m_1\otimes m_2 &\mapsto b(m_1,\und)\cdot m_2. \nonumber
\end{align}

If $\cA$ is an Azumaya $\cO$--algebra, an involution $\sigma$ on $\cA$ will always mean an $\cO$--linear anti-automorphism. Sufficiently locally, any involution $\sigma$ is the adjoint involution of some regular bilinear forms. We call $\sigma$
\begin{enumerate}[label={\rm(\roman*)}]
\item \emph{orthogonal} if the bilinear forms are symmetric,
\item \emph{weakly-symplectic} if the bilinear forms are skew-symmetric, and
\item \emph{symplectic} if the bilinear forms are alternating.
\end{enumerate}
Symplectic involutions are always weakly-symplectic. If $\frac{1}{2}\in \cO$, then weakly-symplectic and symplectic are the same and they are disjoint from orthogonal. However, in the case when $2=0\in \cO$, the notions of orthogonal and weakly-symplectic coincide.

Let $(\cA,\sigma)$ be an Azumaya $\cO$--algebra with involution. We have two maps $\Id \pm \sigma \colon \cA \to \cA$ and we define the submodules
\begin{align*}
\cSym_{(\cA,\sigma)} &= \Ker(\Id - \sigma) & \cSkew_{(\cA,\sigma)} &= \Ker(\Id + \sigma) \\
\cSymd_{(\cA,\sigma)}&= \Img(\Id+\sigma) & \cAlt_{(\cA,\sigma)} &= \Img(\Id - \sigma)
\end{align*}
where $\cSymd_{(\cA,\sigma)}$ and $\cAlt_{(\cA,\sigma)}$ are image sheaves and therefore involve sheafification. On the left, these are the \emph{symmetric} and \emph{symmetrized} elements, while on the right we have the \emph{skew-symmetric} and \emph{alternating} elements.

A \emph{quadratic triple} is $(\cA,\sigma,f)$ where $(\cA,\sigma)$ is an Azumaya $\cO$--algebra with orthogonal involution and $f\colon \cSym_{(\cA,\sigma)} \to \cO$ is an $\cO$--linear map satisfying
\[
f(a+\sigma(a)) = \Trd_{\cA}(a)
\]
for all $a\in \cA$. This is also referred to as a \emph{quadratic pair} $(\sigma,f)$ on $\cA$. The function $f$ is called a \emph{semi-trace}. A morphism of quadratic triples $\varphi \colon (\cA,\sigma,f) \to (\cA',\sigma',f')$ is a morphism $\varphi \colon \cA \to \cA'$ between the Azumaya algebras such that $\varphi \circ \sigma = \sigma'\circ \varphi$ and $f'\circ \varphi = f$.

By \cite[3.6(iii)]{GNR}, if $\sigma$ is an orthogonal involution, then the submodules $\cSym_{(\cA,\sigma)}$ and $\cAlt_{(\cA,\sigma)}$ are mutually orthogonal with respect to the trace form $(a,b)\mapsto \Trd_{\cA}(ab)$ on $\cA$. Therefore, any element $\lambda \in (\cA/\cAlt_{(\cA,\sigma)})(S)$ gives a well defined map $\Trd_{\cA}(\lambda \cdot \und)\colon \cSym_{(\cA,\sigma)} \to \cO$. By \cite[4.18]{GNR}, given a quadratic triple $(\cA,\sigma,f)$, there exists a unique element $\lambda_f\in (\cA/\cAlt_{(\cA,\sigma)})(S)$ such that $f = \Trd_{\cA}(\lambda_f \cdot \und)$.

If $(\cM,q)$ is a regular quadratic module, then by \cite[4.4]{GNR} it corresponds to a unique quadratic triple $(\cEnd_{\cO}(\cM),\sigma_q,f_q)$ where $\sigma_q$ is the adjoint involution of $b_q$ and the semi-trace satisfies
\[
f_q(\varphi_b(m\otimes m))=q(m)
\]
for all $m\in \cM$. Conversely, let $b\colon \cM\times \cM \to \cO$ be a regular symmetric bilinear form with adjoint involution $\sigma_b$. If $(\cEnd_{\cO}(\cM),\sigma_b,f)$ is a quadratic triple then it corresponds to a unique quadratic form
\[
q_f(m) = f(\varphi_b(m\otimes m))
\]
on $\cM$ and the polar of $q_f$ is $b$. The triple $(\cEnd_{\cO}(\cM),\sigma_q,f_q)$ is called the \emph{adjoint} of $(\cM,q)$.

Let $\cB$ be an $\cO$--algebra which is locally a Cartesian product of Azumaya algebras. An \emph{orthogonal involution} on $\cB$ will be an order two anti-automorphism of $\cB$ which acts trivially on the center, is therefore locally a product of involutions on each Azumaya algebra, and those local involutions are orthogonal. A \emph{weakly-symplectic} or \emph{symplectic involution} on $\cB$ is defined similarly. Let $(\cB,\sigma)$ be such an algebra with orthogonal involution. A linear map
$f\colon \cSym_{(\cB,\sigma)} \to \cO$ satisfying $f(b+\sigma(b)) = \Trd_{\cB}(b)$ for all $b\in \cB$ will also be called a \emph{semi-trace}. The triple $(\cB,\sigma,f)$ will be called an \emph{algebra with semi-trace}. Our terminology differs slightly from \cite{DQ21}, where $(\sigma,f)$ would also be called a quadratic pair on $\cB$ despite $\cB$ not being an Azumaya $\cO$--algebra. Such an algebra with semi-trace will locally be a cartesian product of quadratic triples. The process of defining a semi-trace using an element $\lambda \in \cB/\cAlt_{(\cB,\sigma)}$ detailed above works analogously here by virtue of local considerations.

\subsection{Clifford Algebras over Schemes}\label{prelim_Clifford}
We recall the definition of the Clifford algebras associated to a quadratic form $(\cM,q)$ and to a quadratic triple $(\cA,\sigma,f)$ over our base scheme $S$ from \cite[4.2]{CF}. 

\begin{defn}\label{defn_form_Clifford}
Let $(\cM,q)$ be an $\cO$--module with a quadratic form and let $\cT(\cM)$ be the tensor $\cO$--algebra. Then, the \emph{Clifford algebra} of $(\cM,q)$ is
\[
\Cl(\cM,q) = \cT(\cM)/\cI
\]
where $\cI$ is the subsheaf of two-sided ideals generated by elements of the form $m\otimes m - q(m)$.
\end{defn}
Analogous to the theory of Clifford algebras over fields or rings, $\Cl(\cM,q)$ decomposes into even and odd components, denoted $\Cl_0(\cM,q)$ and $\Cl_1(\cM,q)$ respectively. It also has a submodule isomorphic to $\cM$ which is the image of the map $m\mapsto m$, and a canonical involution $\underline{\sigma}$ which is the identity on $\cM$. This involution also restricts to the even and odd portions of the Clifford algebra and is denoted $\underline{\sigma}_0$ and $\underline{\sigma}_1$ respectively. By \cite[4.2.0.7]{CF}, if $U\in \Aff_S$, then $\Cl(\cM,q)(U) \cong \mathrm{Cl}(\cM(U),q(U))$ where the right hand side is the Clifford algebra over the ring $\cO(U)$ as in \cite[IV.1.1.2]{Knus}.

Next, let $(\cA,\sigma,f)$ be a quadratic triple over $S$. Let $\underline{\cA}$ be the $\cO$--module underlying $\cA$ and let $\cT(\underline{\cA})$ be the tensor algebra. Since $\cA$ is an Azumaya $\cO$--algebra, the enveloping map $\varphi_e$ of \eqref{eq_enveloping} is an algebra isomorphism, so in particular gives an $\cO$--module isomorphism
\[
\underline{\varphi_e} \colon \underline{\cA}\otimes_{\cO} \underline{\cA} \iso \underline{\cEnd_{\cO}(\underline{\cA})}.
\]
Now, for $T\in \Sch_S$ and a section $u\in (\underline{\cA}\otimes_{\cO} \underline{\cA})(T)$, the map
\begin{align*}
\underline{\cA}|_T &\mapsto \underline{\cA}|_T \\
x &\mapsto \underline{\varphi_e}(u)|_T(\sigma(x))
\end{align*}
is an $\cO|_T$--module morphism and is therefore equal to $\underline{\varphi_e}(v)$ for some unique $v\in (\underline{\cA}\otimes_{\cO} \underline{\cA})(T)$. Setting $\sigma_2(u)=v$ defines an order two automorphism $\sigma_2 \colon \underline{\cA}\otimes_{\cO} \underline{\cA} \to \underline{\cA}\otimes_{\cO} \underline{\cA}$.

At this point we deviate slightly from the exposition in \cite{CF}, but utilize an equivalent construction.
\begin{lem}
Let $u\in \underline{\cA}\otimes_{\cO} \underline{\cA}$ with $\sigma_2(u)=u$. Then $(\underline{\varphi_e}(u))(a)=0$ for all appropriate sections $a\in \cAlt_{(\cA,\sigma)}$.
\end{lem}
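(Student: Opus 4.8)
The plan is to reduce the assertion to the single endomorphism identity $\underline{\varphi_e}(u)\circ(\Id-\sigma)=0$ and then read that off from the defining property of $\sigma_2$. First I would recall that $\cAlt_{(\cA,\sigma)}=\Img(\Id-\sigma)$ is the image \emph{sheaf}, i.e.\ the sheafification of the presheaf image of the $\cO$--linear map $\Id-\sigma\colon\cA\to\cA$. Given a section $u\in(\underline{\cA}\otimes_{\cO}\underline{\cA})(T)$ with $\sigma_2(u)=u$, the element $\underline{\varphi_e}(u)$ is a morphism of sheaves $\underline{\cA}|_T\to\underline{\cA}|_T$, so its kernel is a subsheaf of $\cA|_T$; such a subsheaf contains $\cAlt_{(\cA,\sigma)}|_T$ as soon as it contains the presheaf image of $\Id-\sigma$. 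Hence it suffices to show $\underline{\varphi_e}(u)\circ(\Id-\sigma)=0$, which in particular forces $(\underline{\varphi_e}(u))(a)=0$ for every appropriate section $a\in\cAlt_{(\cA,\sigma)}$. (Alternatively one may pass to an fppf cover $\{T_i\to T\}$ on which the given $a$ literally has the form $a|_{T_i}=b_i-\sigma(b_i)$, check the vanishing of $(\underline{\varphi_e}(u))(a)|_{T_i}$ there, and glue, using that both $\sigma_2$ and $\underline{\varphi_e}$ commute with restriction; this is equivalent but slightly less clean.)

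Second I would carry out the short computation. Expanding, $\underline{\varphi_e}(u)\circ(\Id-\sigma)=\underline{\varphi_e}(u)-\underline{\varphi_e}(u)\circ\sigma$. But $\underline{\varphi_e}(u)\circ\sigma$ is precisely the endomorphism $x\mapsto\underline{\varphi_e}(u)(\sigma(x))$ of $\underline{\cA}|_T$, which by the very definition of $\sigma_2$ equals $\underline{\varphi_e}(\sigma_2(u))$. Therefore $\underline{\varphi_e}(u)\circ(\Id-\sigma)=\underline{\varphi_e}(u)-\underline{\varphi_e}(\sigma_2(u))=\underline{\varphi_e}\big(u-\sigma_2(u)\big)$, and this is $0$ because $\sigma_2(u)=u$ by hypothesis and $\underline{\varphi_e}$ is linear (indeed an isomorphism). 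This completes the argument.

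The only genuine subtlety I anticipate is bookkeeping around the fact that $\cAlt_{(\cA,\sigma)}$ is defined by sheafification, so an arbitrary local section of it need not be globally of the form $b-\sigma(b)$ over its whole base of definition; the ``kernel is a subsheaf'' observation in the first paragraph (or, equivalently, localizing) disposes of this cleanly. Otherwise the statement is a purely formal consequence of how $\sigma_2$ was constructed, and uses nothing about the semi-trace $f$ or about $\sigma$ being orthogonal.
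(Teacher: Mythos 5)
Your proposal is correct and is essentially the paper's argument: both rest on the defining identity $\underline{\varphi_e}(u)\circ\sigma=\underline{\varphi_e}(\sigma_2(u))$, and your ``kernel of $\underline{\varphi_e}(u)$ is a subsheaf containing the presheaf image of $\Id-\sigma$'' observation is just a slightly more formal packaging of the paper's reduction to sections locally of the form $a'-\sigma(a')$ (your parenthetical alternative is literally the paper's proof).
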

\begin{proof}
Since $a \in \cAlt_{(\cA,\sigma)}$, it is locally of the form $a'-\sigma(a')$. We can then compute that $(\underline{\varphi_e}(u))(a)$ is locally of the form
\begin{align*}
(\underline{\varphi_e}(u))(a'-\sigma(a'))&=(\underline{\varphi_e}(u))(a')-(\underline{\varphi_e}(u))(\sigma(a')) \\
&=(\underline{\varphi_e}(u))(a')-(\underline{\varphi_e}(\sigma_2(u)))(a') = 0
\end{align*}
and therefore $(\underline{\varphi_e}(u))(a)=0$ as claimed.
\end{proof}
This means that for every $u\in \underline{\cA}\otimes_{\cO} \underline{\cA}$ with $\sigma_2(u)=u$, the map $\underline{\varphi_e}(u)$ descends to a well defined function
\[
\phi_u \colon \cA/\cAlt_{(\cA,\sigma)} \to \underline{\cA}.
\]
We use these functions to define the Clifford algebra.
\begin{defn}\label{defn_triple_Clifford}
Let $(\cA,\sigma,f)$ be a quadratic triple over $S$ and let $\lambda_f \in (\cA/\cAlt_{(\cA,\sigma)})(S)$ be the unique element corresponding to $f$ as in \cite[4.18]{GNR}. The \emph{Clifford algebra} of $(\cA,\sigma,f)$ is
\[
\Cl(\cA,\sigma,f) = \frac{\cT(\underline{\cA})}{\cJ_1(\sigma,f)+\cJ_2(\sigma,f)}
\]
where
\begin{enumerate}[label={\rm(\roman*)}]
\item $\cJ_1(\sigma,f)$ is the subsheaf of two-sided ideals generated by all sections of the form $s-f(s)$ for $s\in \cSym_{(\cA,\sigma)}$, and
\item $\cJ_2(\sigma,f)$ is the subsheaf of two-sided ideals generated by all sections of the form $u-\phi_u(\lambda_f)$ where $u\in \underline{\cA}\otimes_{\cO}\underline{\cA}$ with $\sigma_2(u)=u$.
\end{enumerate}
\end{defn}
We have a composition of canonical $\cO$--module maps
\[
\cA \inj \underline{\cA} \inj \cT(\underline{\cA}) \to \Cl(\cA,\sigma,f),
\]
which we denote by
\begin{equation}\label{eq_c_A_inclusion}
c \colon \cA \to \Cl(\cA,\sigma,f)
\end{equation}
and call the \emph{canonical mapping}. The image of $c$ generates $\Cl(\cA,\sigma,f)$ as an algebra. The Clifford algebra is also equipped with a canonical involution $\underline{\sigma}$ which is defined by $\underline{\sigma}(c(a)) = c(\sigma(a))$. The following theorem is a key fact.
\begin{thm}[{\cite[4.2.0.14]{CF}}] \label{Clifford_iso}
If $(\cM,q)$ is a regular quadratic module of constant even rank, then there is an isomorphism of algebras with involution
\[
\Phi \colon (\Cl_0(\cM,q),\underline{\sigma}_0) \iso (\Cl(\cEnd_{\cO}(\cM),\sigma_q,f_q),\underline{\sigma_q})
\]
This isomorphism is induced by the isomorphism of modules $\varphi_b \colon \cM\otimes_{\cO} \cM \iso \cEnd_{\cO}(\cM)$ of \eqref{eq_bilinear_iso}.
\end{thm}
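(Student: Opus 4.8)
The plan is to exhibit $\Phi$ on generators using $\varphi_b$, to verify its defining relations and its bijectivity after passing to affine charts where $\cM$ is free, and to treat the involutions formally. Recall that, identifying the even tensor subalgebra $\bigoplus_{k\ge 0}\cM^{\otimes 2k}\subseteq\cT(\cM)$ with the tensor algebra $\cT(\cM\otimes_\cO\cM)$, the even Clifford algebra $\Cl_0(\cM,q)$ is its quotient by the two--sided ideal generated by the elements $m\otimes m-q(m)$ and $(m_1\otimes m)(m\otimes m_2)-q(m)(m_1\otimes m_2)$, for local sections $m,m_1,m_2\in\cM$. I would take $\Phi$ to be the $\cO$--algebra homomorphism determined on these generators by $m_1\otimes m_2\mapsto c(\varphi_b(m_2\otimes m_1))$, where $c\colon\cEnd_\cO(\cM)\to\Cl(\cEnd_\cO(\cM),\sigma_q,f_q)$ is the canonical mapping; up to the reversal of the two tensor factors this is exactly ``the homomorphism induced by $\varphi_b$'', and the reversal is what makes the relations below hold. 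Since $c(\cEnd_\cO(\cM))$ generates the target and $\varphi_b$ is surjective, $\Phi$ is automatically surjective.

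Verifying well--definedness is the heart of the matter. For the first family of relations: $b_q$ is symmetric, so $\varphi_b$ intertwines the switch of tensor factors with $\sigma_q$, whence $\varphi_b(m\otimes m)\in\cSym_{(\cEnd_\cO(\cM),\sigma_q)}$ and $c(\varphi_b(m\otimes m))=f_q(\varphi_b(m\otimes m))\cdot 1=q(m)\cdot 1$ by the relations $\cJ_1$ and the defining property $f_q(\varphi_b(m\otimes m))=q(m)$ of the semi--trace. For the second family one must prove, in $\Cl(\cEnd_\cO(\cM),\sigma_q,f_q)$,
\[
c(\varphi_b(m\otimes m_1))\,c(\varphi_b(m_2\otimes m))=q(m)\,c(\varphi_b(m_2\otimes m_1)),
\]
which is an instance of the relations $\cJ_2$. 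Working affine--locally with $\cM$ free, one identifies $\cEnd_\cO(\cM)\otimes_\cO\cEnd_\cO(\cM)$ with $\cM^{\otimes 4}$ via $\varphi_b\otimes\varphi_b$ in such a way that $\sigma_2$ becomes the transposition of the two outer tensor factors; then $u:=\varphi_b(m\otimes m_1)\otimes\varphi_b(m_2\otimes m)$ is $\sigma_2$--fixed, and for a local representative $\ell=\sum_k s_k\otimes t_k$ of $\lambda_{f_q}$ one computes $\phi_u(\lambda_{f_q})=\varphi_b(m\otimes m_1)\circ\varphi_b(\ell)\circ\varphi_b(m_2\otimes m)$ by iterating the composition rule $\varphi_b(a\otimes b)\circ\varphi_b(c\otimes d)=b_q(a,d)\varphi_b(c\otimes b)$. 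The scalar that falls out is $\sum_k b_q(s_k,m)b_q(m,t_k)=\Trd_{\cEnd_\cO(\cM)}(\varphi_b(\ell)\circ\varphi_b(m\otimes m))=f_q(\varphi_b(m\otimes m))=q(m)$, the middle equality because $\varphi_b(m\otimes m)$ is symmetric and $f_q=\Trd(\lambda_{f_q}\cdot\und)$; hence $\phi_u(\lambda_{f_q})=q(m)\varphi_b(m_2\otimes m_1)$, which is the displayed relation. I expect this step — correctly pinning $\sigma_2$ down under $\varphi_b$, and keeping track of the factor reversal — to be the main obstacle: it is where the breakdown of the bilinear/quadratic correspondence in characteristic two is felt (the value $q(m)$ is produced by the semi--trace $f_q$, not by halving $b_q$), so it must be carried out with no division by $2$ and with care about the $\cEnd_\cO(\cM)\otimes\cEnd_\cO(\cM)\op$ bookkeeping built into $\sigma_2$.

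It remains to see that $\Phi$ respects the involutions and is an isomorphism. For the first, $\Phi\circ\underline\sigma_0$ and $\underline{\sigma_q}\circ\Phi$ are $\cO$--algebra anti--homomorphisms out of $\Cl_0(\cM,q)$ that both send a generator $m_1m_2$ to $c(\varphi_b(m_1\otimes m_2))$ — using $\underline\sigma_0(m_1m_2)=m_2m_1$ on one side and $\underline{\sigma_q}\,c=c\,\sigma_q$, $\sigma_q\varphi_b(m_2\otimes m_1)=\varphi_b(m_1\otimes m_2)$ on the other — so they coincide. For the isomorphism I would build the inverse directly: the $\cO$--module map $g\colon\cEnd_\cO(\cM)\to\Cl_0(\cM,q)$ with $g(\varphi_b(m_1\otimes m_2))=m_2m_1$ kills $\cJ_1$ (since $g(\varphi_b(m\otimes m))=m^2=q(m)=f_q(\varphi_b(m\otimes m))$ and $g(\alpha+\sigma_q\alpha)=\Trd\alpha$) and kills $\cJ_2$ (for $u$ ranging over a local generating set of the $\sigma_2$--fixed tensors, namely those of the shape $\varphi_b(p_1\otimes p_2)\otimes\varphi_b(p_3\otimes p_1)$, for which $\tilde g(u)=(p_2p_1)(p_1p_3)=q(p_1)p_2p_3=g(\phi_u(\lambda_{f_q}))$, together with the $v+\sigma_2 v$, handled by the same $\phi_u(\lambda_{f_q})$ computation). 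It therefore induces an algebra homomorphism $\Psi\colon\Cl(\cEnd_\cO(\cM),\sigma_q,f_q)\to\Cl_0(\cM,q)$ with $\Psi c=g$, and $\Phi\circ\Psi$ and $\Psi\circ\Phi$ are the identity because they fix generators. (Alternatively, if one takes as given that $\Cl(\cEnd_\cO(\cM),\sigma_q,f_q)$ is finite locally free of rank $2^{2n-1}$, the surjection $\Phi$ of finite locally free sheaves of that common constant rank is already bijective.) Every step is stable under base change and visibly natural in $(\cM,q)$, which is the functoriality the statement records.
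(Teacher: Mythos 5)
The paper does not prove this statement at all: it is imported directly from \cite[4.2.0.14]{CF}, so there is no internal argument to compare yours against. Your proof is the standard construction (the classical argument of \cite[\S 8.B]{KMRT} transported to sheaves) and it checks out: the presentation of $\Cl_0(\cM,q)$ as a quotient of $\cT(\cM\otimes_{\cO}\cM)$ by the two families of relations, the verification that $c(\varphi_b(m\otimes m_1))\,c(\varphi_b(m_2\otimes m))=q(m)\,c(\varphi_b(m_2\otimes m_1))$ is an instance of the generators of $\cJ_2(\sigma_q,f_q)$, and the inverse built from $\varphi_b(m_1\otimes m_2)\mapsto m_2m_1$ are all correct, and you are right that with this paper's conventions for $\varphi_b$ and $\varphi_e$ the factor reversal is genuinely forced (the unreversed assignment violates the second family of relations unless $2=0$), so the statement's phrase ``induced by $\varphi_b$'' holds only up to the switch of factors. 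Three points deserve to be made explicit rather than implicit. First, ``$g$ kills $\cJ_1$'' needs that $\cSym_{(\cEnd_{\cO}(\cM),\sigma_q)}$ is locally spanned by the elements $\varphi_b(m\otimes m)$ together with symmetrized elements; this follows from a local basis of $\cM$, but it is precisely where $\cSym\neq\cSymd$ matters when $2$ is not invertible, so it merits a sentence. Second, your handling of the generators $v+\sigma_2(v)$ is not literally ``the same computation'' as the case $p_1=p_4$: it additionally uses that a local representative $\ell$ of $\lambda_{f_q}$ satisfies $\ell+\sigma_q(\ell)=1$ (equivalently, the semi-trace identity $f_q(\alpha+\sigma_q(\alpha))=\Trd_{\cEnd_{\cO}(\cM)}(\alpha)$), which is available from \cite{GNR} and should be invoked. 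Third, the identification of $\sigma_2$ with the transposition of the outer factors of $\cM^{\otimes 4}$, and the claim that your two families locally span the $\sigma_2$-fixed tensors, should be verified from the paper's definition of $\sigma_2$ via $\varphi_e$ and a free local basis; both do hold with the stated conventions. With these details spelled out, your argument is complete, stable under base change, and gives the asserted isomorphism of algebras with involution.
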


The Clifford algebra construction is functorial. Given a homomorphism of quadratic triples $(\cA,\sigma,f) \to (\cA',\sigma',f')$ there is a homomorphism of algebras with involution
\begin{equation}\label{eq_Clifford_functorial}
\Cl(\varphi)\colon (\Cl(\cA,\sigma,f),\underline{\sigma}) \to (\Cl(\cA',\sigma',f'),\underline{\sigma'})
\end{equation}
defined by $\bC(\varphi)(c(a)) = c'(\varphi(a))$ for all $a\in \cA$ where $c\colon \cA \to \Cl(\cA,\sigma,f)$ and $c'\colon \cA' \to \Cl(\cA',\sigma',f')$ are the respective canonical mappings.

In what follows, we will use the Clifford algebra of a hyperbolic quadratic form for calculations. We outline the construction of this algebra and some of its key properties in the following example.
\begin{ex}\label{split_Clifford}
We let $\cV = \cO^n$ be the free $\cO$--module of rank $n$, and set $\HH(\cV) = \cV \oplus \cV^*$. Let $\{v_1,\ldots,v_n\}$ be the standard basis of $\cV$, and $\{v_1^*,\ldots,v_n^*\}$ the dual basis of $\cV^*$. We define a hyperbolic quadratic form on $\HH(\cV)$ by setting $q_{2n}(x+g)=g(x)$ for $x\in \cV$ and $g\in \cV^*$. This is a regular quadratic form. The Clifford algebra $\Cl(\HH(\cV),q_{2n})$ is generated by the elements $v_i$ and $v_j^*$ subject to the relations
\begin{align*}
v_i^2 &= 0 & (v_i^*)^2 &= 0 \\
v_iv_j &= -v_jv_i & v_i^* v_j^* &= -v_j^* v_i^* \\
v_iv_i^* &= 1-v_i^*v_i & v_iv_j^* &= -v_j^*v_i
\end{align*}
for all $1\leq i,j \leq n$ with $i\neq j$.

Let $\wedge \cV$ be the exterior algebra of $\cV$, and let $\wedge_0 \cV$ and $\wedge_1 \cV$ be the even and odd parts respectively. We define two families of endomorphisms of $\wedge\cV$. For $x\in \cV$, let $\ell_x \colon \wedge \cV \to \wedge\cV$ be the left multiplication by $x$. For $g\in \cV^*$, define $d_g \colon \wedge \cV \to \wedge \cV$ by
\[
d_g(x_1\wedge \ldots \wedge x_k) = \sum_{i=1}^k (-1)^{i+1}g(x_i)\cdot x_1\wedge\ldots\wedge \hat{x_i} \wedge \ldots \wedge x_k
\]  
where hat denotes omission. Then by \cite[4.2.0.10]{CF} (which follows from \cite[IV.2.1.1]{Knus}), there is an isomorphism of algebras
\begin{align*}
\Phi \colon \Cl(\HH(\cV),q_{2n}) &\iso \cEnd_{\cO}(\wedge \cV) \\
v_i &\mapsto \ell_{v_i} \\
v_i^* &\mapsto d_{v_i^*}
\end{align*}
which restricts to an isomorphism
\begin{equation}\label{eq_Phi_not}
\Phi_0 \colon \Cl_0(\HH(\cV),q_{2n}) \iso \cEnd_{\cO}(\wedge_0 \cV)\times \cEnd_{\cO}(\wedge_1 \cV).
\end{equation}
To track the canonical involution across this isomorphism, we equip $\wedge \cV$ with its own bilinear form following \cite[pg.90]{KMRT}. Let $[n]=\{1,\ldots,n\}$ be the set of integers from $1$ to $n$, and then for $I=\{i_1,\ldots,i_k\}\subseteq [n]$, written in increasing order, we set $v_I = v_{i_1}\wedge v_{i_2} \wedge \ldots \wedge v_{i_k}$. These $v_I$ form a basis of $\wedge\cV$. Now, for $x\in \wedge\cV$, let $\overline{x}$ be the element with the wedge factors reversed, i.e., $\overline{x_1\wedge x_2 \wedge \ldots \wedge x_k}=x_k\wedge \ldots \wedge x_2 \wedge x_1$ and extended linearly. For $I\subseteq [n]$ we have that
\[
\overline{v_I} = (-1)^{\frac{(|I|-1)|I|}{2}}v_I = \begin{cases} v_I & |I|\equiv 0,1 \pmod{4} \\ -v_I &|I|\equiv 2,3 \pmod{4}. \end{cases}
\]
Let $\pi \colon \wedge\cV \to \cO$ be the map coming from the projection onto the submodule spanned by $e_{[n]}$. Define a bilinear form $b_\wedge \colon \wedge \cV \times \wedge\cV \to \cO$ by
\[
b_{\wedge}(x,y) = \pi(\overline{x}\wedge y).
\]
\begin{lem}\label{b_wedge_formula}
We have the following formula for the bilinear form $b_\wedge$.
\[
b_\wedge\left(\sum_{I\subseteq [n]}\alpha_I v_I , \sum_{I\subseteq [n]}\beta_I v_I\right) = \sum_{I\subseteq [n]} (-1)^{(\Sigma I)-|I|} \alpha_I \beta_{I^c}.
\]
where $\Sigma I = \sum_{i\in I} i$ and $\alpha_I,\beta_I \in \cO$ are coefficients.
\end{lem}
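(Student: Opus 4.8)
The plan is to evaluate $b_\wedge$ on the basis $\{v_I : I \subseteq [n]\}$ of the free module $\wedge\cV$ and then expand by bilinearity. The sheaf-theoretic setting contributes nothing here: $\wedge\cV = \wedge\cO^n$ is free on the (constant) family $v_I$, and $b_\wedge$ is defined by the same formula $\pi(\overline{x}\wedge y)$ over every $T\in\Sch_S$, so the claim is the identity of coefficients
$$b_\wedge(v_I, v_J) = (-1)^{(\Sigma I) - |I|}\,\delta_{J, I^c},$$
checked uniformly over $\cO(T)$. Substituting into $b_\wedge\!\left(\sum_I \alpha_I v_I, \sum_J \beta_J v_J\right) = \sum_{I,J}\alpha_I\beta_J\, b_\wedge(v_I,v_J)$ then gives the stated formula.

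First I would apply the identity $\overline{v_I} = (-1)^{(|I|-1)|I|/2}v_I$ recorded just above the statement, reducing matters to $b_\wedge(v_I, v_J) = (-1)^{(|I|-1)|I|/2}\,\pi(v_I \wedge v_J)$. Since $v_I \wedge v_J = 0$ whenever $I \cap J \neq \emptyset$, and $\pi$ projects onto $\cO\cdot v_{[n]}$, the product vanishes unless $I$ and $I^c$ partition $[n]$, i.e.\ unless $J = I^c$; in that case $v_I \wedge v_{I^c} = \varepsilon(I)\, v_{[n]}$ for a sign $\varepsilon(I) \in \{\pm 1\}$, so $b_\wedge(v_I, v_{I^c}) = (-1)^{(|I|-1)|I|/2}\varepsilon(I)$ and $b_\wedge(v_I, v_J) = 0$ for $J \neq I^c$.

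The only genuine computation is $\varepsilon(I)$. Writing $I = \{i_1 < \cdots < i_k\}$ and $I^c = \{j_1 < \cdots < j_{n-k}\}$, the sign $\varepsilon(I)$ is the signature of the permutation of $[n]$ listing $i_1, \ldots, i_k, j_1, \ldots, j_{n-k}$; as each block is already increasing, its inversions are exactly the pairs with $i_a > j_b$, and for fixed $a$ there are $i_a - a$ such $b$ (of the $i_a - 1$ integers below $i_a$, exactly $a-1$ lie in $I$, so $i_a - a$ lie in $I^c$). Hence the number of inversions is $\sum_{a=1}^k (i_a - a) = (\Sigma I) - k(k+1)/2$, so $\varepsilon(I) = (-1)^{(\Sigma I) - k(k+1)/2}$ with $k = |I|$. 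Simplifying the exponent,
$$\frac{(k-1)k}{2} + (\Sigma I) - \frac{k(k+1)}{2} = (\Sigma I) - k = (\Sigma I) - |I|,$$
gives $b_\wedge(v_I, v_{I^c}) = (-1)^{(\Sigma I) - |I|}$, as needed.

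I do not anticipate a real obstacle: the content is entirely a sign count, and the only place to slip is the inversion bookkeeping for $\varepsilon(I)$, together with a momentary care that $\overline{v_I}$ contributes the further factor $(-1)^{(|I|-1)|I|/2}$ which is precisely what collapses the two binomial terms above into $(\Sigma I) - |I|$.
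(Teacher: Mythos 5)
Your proof is correct and takes essentially the same approach as the paper: both reduce by bilinearity to the basis computation $\pi(\overline{v_I}\wedge v_J) = (-1)^{(\Sigma I)-|I|}\,\delta_{J,I^c}$. The only difference is sign bookkeeping — the paper commutes the factors of $\overline{v_I}$ through $v_{I^c}$ one at a time (each $v_{i}$ contributing $(-1)^{i-1}$), whereas you first extract the reversal sign $(-1)^{(|I|-1)|I|/2}$ and then count inversions of the shuffle permutation; both counts agree and yield the stated exponent $(\Sigma I)-|I|$.
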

\begin{proof}
First, we argue that $\overline{v_I}\wedge v_{I^c} = (-1)^{(\Sigma I)-|I|} v_{[n]}$. Let $I=\{i_1,\ldots,i_k\}$, written in increasing order. Then
\[
\overline{v_I}\wedge v_{I^c} = v_{i_k}\wedge \ldots \wedge v_{i_1}\wedge v_{I^c}.
\]
If we commute the leading factor $v_{i_k}$ to the right until it is in the correct position, it must pass exactly those $v_{\ell}$ (either in $\overline{v_I}$ or in $v_{I^c}$) for which $\ell < i_k$. Therefore, this process produces a factor of $(-1)^{i_k-1}$ and we have
\[
\overline{v_I}\wedge v_{I^c} = (-1)^{i_k - 1} \overline{v_{I\backslash \{v_{i_k}\}}} \wedge v_{I^c \cup \{v_{i_k}\}}.
\]
Repeating this process for all factors in $v_I$, we obtain
\[
\overline{v_I}\wedge v_{I^c} = (-1)^{\sum_{i\in I}(i-1)} v_{[n]} = (-1)^{\Sigma I - |I|}v_{[n]}.
\]
Now we may simply compute.
\begin{align*}
b_\wedge\left(\sum_{I\subseteq [n]}\alpha_I v_I , \sum_{I\subseteq [n]}\beta_I v_I\right) &= \pi\left( \left( \sum_{I\subseteq [n]}\alpha_I \overline{v_I}\right)\wedge \left( \sum_{I\subseteq [n]}\beta_I v_I\right)\right) \\
&= \pi\left( \sum_{I\subseteq [n]} \alpha_I \beta_{I^c} \overline{v_I}\wedge v_{I^c} \right) \\
&= \sum_{I\subseteq [n]} (-1)^{(\Sigma I)-|I|} \alpha_I \beta_{I^c}.
\end{align*}
In the second equality, we both expanded the product of the two sums and dropped any term for which $\pi(\overline{v_I}\wedge v_J)=0$, which are those where $J\neq I^c$. This justifies the claimed formula.
\end{proof}

\begin{lem}\label{b_wedge_properties}
We have the following.
\begin{enumerate}[label={\rm(\roman*)}]
\item \label{b_wedge_properties_i} $b_\wedge$ is symmetric when $n\equiv 0,1 \pmod{4}$ and it is alternating when $n\equiv 2,3 \pmod{4}$. 
\item \label{b_wedge_properties_ii} $b_\wedge$ is regular. 
\item \label{b_wedge_properties_iii} Since $b_\wedge$ is regular, it has an adjoint involution $\tau$. This involution corresponds to $\underline{\sigma}$ via $\Phi$, i.e., $\tau = \Phi\circ\underline{\sigma}\circ \Phi^{-1}$.
\end{enumerate}
\end{lem}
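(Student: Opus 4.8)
The plan is to treat the three parts in order, using the explicit formula of \Cref{b_wedge_formula} for (i) and (ii) and a check on algebra generators for (iii). Since $\cV = \cO^n$ is free and $b_\wedge$, $\ell_x$, $d_g$, and $\Phi$ are all given by universal formulas in the standard basis, each identity below may be verified on sections over affine $U \in \Aff_S$, reducing it to a statement over the commutative ring $\cO(U)$; thus one may argue as though over a base ring, and the sign identities invoked are valid in any commutative ring.

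For \ref{b_wedge_properties_i}, \Cref{b_wedge_formula} gives $b_\wedge(v_I,v_J)=0$ unless $J=I^c$, and $b_\wedge(v_I,v_{I^c}) = (-1)^{(\Sigma I)-|I|}$. From $\Sigma I + \Sigma I^c = \tfrac{n(n+1)}{2}$ and $|I|+|I^c|=n$ one gets $(\Sigma I^c)-|I^c| = \tfrac{n(n-1)}{2} - \big((\Sigma I)-|I|\big)$, hence $b_\wedge(v_{I^c},v_I) = (-1)^{n(n-1)/2}\,b_\wedge(v_I,v_{I^c})$. As $\tfrac{n(n-1)}{2}$ is even precisely when $n \equiv 0,1 \pmod 4$, this yields symmetry in those cases and skew-symmetry otherwise; and in the skew case, for $x = \sum_I \alpha_I v_I$ the contributions of the (always distinct) subsets $I$ and $I^c$ to $b_\wedge(x,x)$ cancel, so $b_\wedge(x,x)=0$ and $b_\wedge$ is alternating. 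For \ref{b_wedge_properties_ii}, the same formula shows $b_\wedge^*(v_I) = (-1)^{(\Sigma I)-|I|}\,v_{I^c}^*$ in terms of the dual basis $\{v_I^*\}$ of $(\wedge\cV)^*$; since $\pm 1 \in \cO^\times$ and $I \mapsto I^c$ permutes the subsets of $[n]$, the map $b_\wedge^*$ sends a basis to a basis and so is an isomorphism.

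For \ref{b_wedge_properties_iii}, I would note that both $\tau$ and $\Phi\circ\underline{\sigma}\circ\Phi^{-1}$ are $\cO$-linear anti-automorphisms of $\cEnd_\cO(\wedge\cV)$, so they coincide once they agree on an algebra-generating set. The subsheaf $\Phi(\HH(\cV))$ generates $\cEnd_\cO(\wedge\cV)$ (because $\HH(\cV)$ generates $\Cl(\HH(\cV),q_{2n})$ and $\Phi$ is onto), it is spanned by the $\ell_{v_i}$ and $d_{v_j^*}$, and $\underline{\sigma}$ is the identity on $\HH(\cV)$; hence it suffices to show that each $\ell_{v_i}$ and each $d_{v_j^*}$ is $\tau$-symmetric, i.e. self-adjoint for $b_\wedge$. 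For $\ell_x$ with $x \in \cV$ this is immediate from $b_\wedge(a,b)=\pi(\overline{a}\wedge b)$, $\overline{a\wedge b} = \overline{b}\wedge\overline{a}$, and $\overline{x}=x$: both $b_\wedge(a,\ell_x(b))$ and $b_\wedge(\ell_x(a),b)$ equal $\pi(\overline{a}\wedge x\wedge b)$. For $d_g$ with $g \in \cV^*$, I would restrict to homogeneous $a,b$ with $\deg a + \deg b = n+1$, since in every other bidegree both sides vanish by a degree count. Then $\overline{a}\wedge b \in \wedge^{n+1}\cV = 0$, so applying the degree $-1$ anti-derivation $d_g$ and the Leibniz rule gives $0 = d_g(\overline{a})\wedge b + (-1)^{\deg a}\,\overline{a}\wedge d_g(b)$; applying $\pi$ and substituting the monomial identity $\overline{d_g(a)} = (-1)^{\deg a + 1}\,d_g(\overline{a})$ gives $b_\wedge(a,d_g(b)) = b_\wedge(d_g(a),b)$. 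With regularity from \ref{b_wedge_properties_ii}, these force $\tau(\ell_{v_i}) = \ell_{v_i}$ and $\tau(d_{v_j^*}) = d_{v_j^*}$, hence $\tau = \Phi\circ\underline{\sigma}\circ\Phi^{-1}$.

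The main obstacle is the sign bookkeeping in \ref{b_wedge_properties_iii} for $d_g$: verifying $\overline{d_g(a)} = (-1)^{\deg a + 1}\,d_g(\overline{a})$ on monomials and keeping the Leibniz rule and the application of $\pi$ mutually consistent, while confirming that $\deg a + \deg b = n+1$ is the only bidegree needing attention. Parts \ref{b_wedge_properties_i} and \ref{b_wedge_properties_ii} are essentially arithmetic with the complement involution $I \mapsto I^c$ and the sign $(-1)^{(\Sigma I)-|I|}$, and nothing is lost in passing to an arbitrary base $S$ since every object involved is defined by the same universal formulas on the free module $\cO^n$.
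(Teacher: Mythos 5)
Your proposal is correct, and parts \ref{b_wedge_properties_i}, \ref{b_wedge_properties_ii}, and the self-adjointness of the operators $\ell_{v_i}$ in \ref{b_wedge_properties_iii} follow the paper's proof essentially verbatim: the same sign comparison under $I\mapsto I^c$ (the paper multiplies the two signs to get $(-1)^{n(n-1)/2}$, you subtract exponents, which is the same computation), the same cancellation in pairs for the alternating case, the same observation that the Gram matrix in the basis $\{v_I\}$ is a signed permutation matrix, and the same one-line identity $\pi(\overline{x}\wedge v_i\wedge y)$ for $\ell_{v_i}$. The genuine difference is your treatment of $d_g$: the paper fixes $g=v_i^*$ and verifies $b_\wedge(v_J,d_{v_i^*}(v_K))=b_\wedge(d_{v_i^*}(v_J),v_K)$ pair-by-pair on basis monomials, reducing to the subset identities $\Sigma K^c=\Sigma J-i$, $|K^c|=|J|-1$, and $i-1=(p-1)+(q-1)$, whereas you use that $d_g$ is an odd derivation together with $\overline{a}\wedge b\in\wedge^{n+1}\cV=0$ in the only relevant bidegree $\deg a+\deg b=n+1$, plus the reversal identity $\overline{d_g(a)}=(-1)^{\deg a+1}d_g(\overline{a})$; both of these auxiliary identities are true (the latter checks out on monomials, where the omitted-factor signs $(-1)^{i+1}$ and $(-1)^{k-i}$ differ by exactly $(-1)^{k+1}$), and your degree-count reduction is valid since $\pi$ kills everything outside top degree. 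Your route buys a statement that is uniform in $g\in\cV^*$ and trades the subset bookkeeping for two structural facts about the contraction operator, at the cost of having to verify the Leibniz rule and the reversal identity; the paper's route uses nothing beyond the defining formula for $d_{v_i^*}$ but is heavier on explicit signs. Both arguments correctly invoke regularity from \ref{b_wedge_properties_ii} to pass from the adjointness identities to $\tau(\ell_{v_i})=\ell_{v_i}$, $\tau(d_{v_i^*})=d_{v_i^*}$, and both reduce \ref{b_wedge_properties_iii} to generators of $\cEnd_{\cO}(\wedge\cV)$ in the same way.
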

\begin{proof}
\noindent\ref{b_wedge_properties_i}: Using \Cref{b_wedge_formula} but reversing the arguments yields
\begin{align*}
b_\wedge\left(\sum_{I\subseteq [n]}\beta_I v_I, \sum_{I\subseteq [n]}\alpha_I v_I \right) &= \sum_{I\subseteq [n]} (-1)^{(\Sigma I)-|I|} \beta_I \alpha_{I^c} \\
&= \sum_{I\subseteq [n]} (-1)^{(\Sigma I^c)-|I^c|} \alpha_I \beta_{I^c}.
\end{align*}
Hence we need to compare the coefficients $(-1)^{(\Sigma I)-|I|}$ and $(-1)^{(\Sigma I^c)-|I^c|}$. Their product is
\[
(-1)^{(\Sigma I)-|I|}\cdot (-1)^{(\Sigma I^c)-|I^c|} = (-1)^{\Sigma [n] -n} = (-1)^{\Sigma[n-1]} = (-1)^{\frac{n(n-1)}{2}}
\]
which is $1$ if $n\equiv 0,1 \pmod{4}$ and is $-1$ if $n\equiv 2,3\pmod{4}$. Therefore, when $n\equiv 0,1 \pmod{4}$ the coefficients agree and $b_\wedge$ is symmetric.

Now assume $n\equiv 2,3 \pmod{4}$. The expression
\[
b_\wedge\left(\sum_{I\subseteq [n]}\alpha_I v_I, \sum_{I\subseteq [n]}\alpha_I v_I \right) = \sum_{I\subseteq [n]} (-1)^{(\Sigma I)-|I|} \alpha_I \alpha_{I^c}
\]
will contain the terms $(-1)^{(\Sigma I)-|I|} \alpha_I \alpha_{I^c}$ and $(-1)^{(\Sigma I^c)-|I^c|} \alpha_{I^c}\alpha_I$ in pairs. By the computation above, these are negatives of one another and hence cancel. Thus we see that $b_\wedge$ is alternating in this case.

\noindent\ref{b_wedge_properties_ii}: Pick any order on the basis $\{v_I \mid I\subseteq [n]\}$ of $\wedge\cV$. Once again using the formula of \Cref{b_wedge_formula}, we see that the bilinear form $b_\wedge$ will be given by $b(x,y)=x^T B y$ where we view $x$ and $y$ as column vectors and where $B=[\beta_{IJ}]$ is the matrix with entries
\[
\beta_{IJ} = \begin{cases} 0 & J\neq I^c \\ (-1)^{\Sigma I -|I|} & J=I^c. \end{cases}
\]
This matrix $B$ is invertible, so $b_\wedge$ is regular.

\noindent\ref{b_wedge_properties_iii}: To verify that $\tau = \Phi\circ\underline{\sigma}\circ \Phi^{-1}$, we only need to show that $\tau$ fixes the elements $\ell_{v_i}, d_{v_i^*} \in \cEnd_{\cO}(\wedge \cN)$ since the defining property of $\underline{\sigma}$ is that it acts as the identity on $\HH(\cV) \subseteq \Cl(\HH(\cV),q_{2n})$. First, we see that
\[
b(x,\ell_{v_i}(y)) = \pi(\overline{x}\wedge (v_i \wedge y)) = \pi(\overline{(v_i\wedge x)}\wedge y) = b(\ell_{v_i}(x),y)
\]
and so $\tau(\ell_{v_i})=\ell_{v_i}$ as desired. Second, to verify that $\tau(d_{v_i^*})=d_{v_i^*}$ it suffices to only consider basis elements of $\wedge\cV$. Let $J,K\subseteq [n]$. If $J\cup K \neq [n]$ or $J\cap K \neq \{i\}$, then it is clear that both
\[
b_\wedge(v_J,d_{v_i^*}(v_K))=0 \text{ and } b_\wedge(d_{v_i^*}(v_J),v_K)=0.
\]
Now assume that $J\cup K = [n]$ and $J\cap K = \{i\}$. Let $J=\{j_1,\ldots,j_p=i,\ldots,j_r\}$ and $K=\{k_1,\ldots,k_q=i,\ldots,k_s\}$. Then
\begin{align*}
\overline{v_J}\wedge d_{v_i^*}(v_K) &= \overline{v_J}\wedge (-1)^{q+1}v_{J^c} = (-1)^{q+1}(-1)^{\Sigma J - |J|} v_{[n]}, \text{ and}\\
\overline{d_{v_i^*}(v_J)}\wedge v_K &= (-1)^{p+1}\overline{v_{K^c}}\wedge v_K = (-1)^{p+1}(-1)^{\Sigma K^c -|K^c|} v_{[n]}.
\end{align*}
Due to the assumptions on $J$ and $K$, we have that $\Sigma K^c = \Sigma J - i$ and $|K^c| = |J|-1$. Furthermore, by counting the number of elements of $[n]$ less than $i$ in $J$ and $K$, we get that $i-1=(p-1)+(q-1)$. Therefore,
\begin{align*}
(-1)^{p+1}(-1)^{\Sigma K^c -|K^c|} &= (-1)^{p+1}(-1)^{\Sigma J - |J| - (i - 1)} = (-1)^{(p+1)-(i-1)}(-1)^{\Sigma J - |J|}\\
&= (-1)^{q+1}(-1)^{\Sigma J - |J|}
\end{align*}
and hence the above two expressions are equal. In summary, $b_\wedge(v_J,d_{v_i^*}(v_K)) = b_\wedge(d_{v_i^*}(v_J),v_K)$ for all $J,K\subseteq [n]$, and so $\tau(d_{v_i^*})=d_{v_i^*}$. This finishes the proof.
\end{proof}

Next, let $q_\wedge \colon \wedge \cV \to \cO$ be the quadratic form defined by
\begin{equation}\label{eq_q_wedge}
q_\wedge \left(\sum_{I\subseteq [n]} \alpha_I v_I\right) = \sum_{I\subseteq [n] \atop 1\in I} (-1)^{\Sigma I -|I|} \alpha_I \alpha_{I^c}
\end{equation}
for coefficients $a_I \in \cO$. Note that $q_\wedge$ is a hyperbolic quadratic form. This can be seen, for example, with respect to the basis of $\wedge\cV$ consisting of elements $v_I$ if $1\notin I$ and $(-1)^{\Sigma I -|I|}v_I$ if $1\in I$.
 
\begin{lem}\label{q_wedge_polar}
Assume $n\equiv 0,1\pmod{4}$ or $2=0 \in \cO$. Then, the bilinear form $b_\wedge$ is the polar of the quadratic form $q_\wedge$ of \eqref{eq_q_wedge}.
\end{lem}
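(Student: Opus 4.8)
The plan is a direct computation: expand the polar form $b_{q_\wedge}$ from the definition \eqref{eq_q_wedge} and match it against the closed formula for $b_\wedge$ given in \Cref{b_wedge_formula}.

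First I would write $x = \sum_{I\subseteq[n]}\alpha_I v_I$ and $y = \sum_{I\subseteq[n]} \beta_I v_I$, substitute $\alpha_I \rightsquigarrow \alpha_I + \beta_I$ into \eqref{eq_q_wedge} to compute $q_\wedge(x+y)$, and subtract $q_\wedge(x) + q_\wedge(y)$. The pure $\alpha$-- and pure $\beta$--terms cancel, leaving
\[
b_{q_\wedge}(x,y) = \sum_{I\subseteq[n],\, 1 \in I} (-1)^{\Sigma I - |I|}\bigl(\alpha_I\beta_{I^c} + \alpha_{I^c}\beta_I\bigr).
\]
On the other side, I would split the sum $b_\wedge(x,y) = \sum_{I\subseteq[n]}(-1)^{\Sigma I - |I|}\alpha_I\beta_{I^c}$ of \Cref{b_wedge_formula} according to whether $1\in I$ or $1\notin I$; in the second part the substitution $J = I^c$ (so that $1 \in J$ and $I = J^c$) turns it into $\sum_{1\in J}(-1)^{\Sigma J^c - |J^c|}\alpha_{J^c}\beta_J$, giving
\[
b_\wedge(x,y) = \sum_{I\subseteq[n],\, 1\in I}\Bigl((-1)^{\Sigma I - |I|}\alpha_I\beta_{I^c} + (-1)^{\Sigma I^c - |I^c|}\alpha_{I^c}\beta_I\Bigr).
\]

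Comparing coefficients, the desired equality $b_{q_\wedge} = b_\wedge$ reduces to showing $(-1)^{\Sigma I^c - |I^c|} = (-1)^{\Sigma I - |I|}$ for each $I$ with $1\in I$. This is exactly where the hypothesis enters: the sign computation already performed in the proof of \Cref{b_wedge_properties} shows that the product of these two signs is $(-1)^{\Sigma[n] - n} = (-1)^{n(n-1)/2}$, which equals $1$ precisely when $n \equiv 0,1 \pmod 4$, so in that case the two signs agree term by term and the two displays coincide. In the remaining case $2 = 0 \in \cO$, every sign is trivial in $\cO$ and both displays collapse to $\sum_{1\in I}(\alpha_I\beta_{I^c} + \alpha_{I^c}\beta_I)$, so the identity again holds.

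I do not expect a genuine obstacle here; the only step requiring a little care is the reindexing $I \mapsto I^c$ in the ``$1 \notin I$'' half of $b_\wedge$ and tracking how $\Sigma I$ and $|I|$ transform under complementation, but that bookkeeping has essentially been carried out already inside \Cref{b_wedge_properties}. One should also note that each complementary pair $\{I, I^c\}$ with $1 \in I$ contributes exactly once to each of the two sums, which is immediate from the ranges of summation, so no term is double-counted.
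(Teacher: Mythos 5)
Your proof is correct and follows essentially the same route as the paper: expand the polar of $q_\wedge$ so the pure $\alpha$- and $\beta$-terms cancel, reindex the complementary half of the sum, and invoke the sign identity $(-1)^{\Sigma I - |I|} = (-1)^{\Sigma I^c - |I^c|}$ (valid when $n\equiv 0,1 \pmod 4$, or trivially when $2=0$) already established in the proof of \Cref{b_wedge_properties}\ref{b_wedge_properties_i}. The only cosmetic difference is that you massage $b_\wedge$ toward the polar form rather than the other way around.
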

\begin{proof}
We show that $b_\wedge$ is the polar of $q_\wedge$ by computation. For two generic elements $\sum_{I\subseteq [n]}\alpha_I v_I$ and $\sum_{I\subseteq [n]}\beta_I v_I$ in $\wedge\cV$, we have
\begin{align*}
&q_\wedge\left( \sum_{I\subseteq [n]}\alpha_I v_I + \sum_{I\subseteq [n]}\beta_I v_I \right) - q_\wedge \left(\sum_{I\subseteq [n]}\alpha_I v_I\right) - q_\wedge \left(\sum_{I\subseteq [n]}\beta_I v_I\right) \\
=& \sum_{I\subseteq [n] \atop 1\in I} (-1)^{\Sigma I -|I|} (\alpha_I+\beta_I)(\alpha_{I^c}+\beta_{I^c}) - \sum_{I\subseteq [n] \atop 1\in I} (-1)^{\Sigma I -|I|} \alpha_I \alpha_{I^c} - \sum_{I\subseteq [n] \atop 1\in I} (-1)^{\Sigma I -|I|} \beta_I \beta_{I^c} \\
=& \sum_{I\subseteq [n] \atop 1\in I} (-1)^{\Sigma I -|I|}(\alpha_I\beta_{I^c} + \alpha_{I^c}\beta_I) = \sum_{I\subseteq [n] \atop 1\in I}(-1)^{\Sigma I -|I|} \alpha_I\beta_{I^c} + \sum_{I\subseteq [n] \atop 1\notin I} (-1)^{\Sigma I^c -|I^c|}\alpha_I\beta_{I^c} \\
=& \sum_{I\subseteq [n]} (-1)^{\Sigma I -|I|}\alpha_I\beta_{I^c}
\end{align*}
where the last equality uses the fact from the proof of \Cref{b_wedge_properties}\ref{b_wedge_properties_i} that $(-1)^{\Sigma I -|I|}=(-1)^{\Sigma I^c -|I^c|}$ under our assumptions. This final expression is equal to
\[
b_\wedge(\sum_{I\subseteq [n]}\alpha_I v_I, \sum_{I\subseteq [n]}\beta_I v_I)
\]
by \Cref{b_wedge_formula}. Therefore $b_\wedge$ is the polar of $q_\wedge$.
\end{proof}
\end{ex}

The computations in the above example describe any Clifford algebra locally, and so they give us the following analogue of \cite[8.12]{KMRT}.
\begin{lem}\label{even_invol_type}
Let $(\cA,\sigma,f)$ be a quadratic triple of degree $2n$. Then, $\Cl(\cA,\sigma,f)$ is locally a Cartesian product of two Azumaya algebras and we have the following.
\begin{enumerate}[label={\rm(\roman*)}]
\item \label{even_invol_type_i} If $n$ is odd, the canonical involution $\underline{\sigma}$ acts non-trivially on the center of $\Cl(\cA,\sigma,f)$.
\item \label{even_invol_type_ii} If $n \equiv 0 \pmod{4}$ then $\underline{\sigma}$ is orthogonal.
\item \label{even_invol_type_iii} If $n\equiv 2 \pmod{4}$ then $\underline{\sigma}$ is symplectic.
\end{enumerate}
\end{lem}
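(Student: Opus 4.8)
The plan is to reduce every assertion to the explicit split computation of \Cref{split_Clifford}. All four claims --- the local product decomposition and parts \ref{even_invol_type_i}--\ref{even_invol_type_iii} --- may be verified after passing to an fppf cover of $S$, since the Clifford algebra, its canonical involution $\underline{\sigma}$, its centre, and the reduced trace all commute with base change. Working fppf-locally, I claim $(\cA,\sigma,f)$ becomes split: fppf-locally $\cA$ is a matrix algebra, so by the adjoint correspondence recalled in \Cref{quad_forms_and_triples} the triple $(\cA,\sigma,f)$ is locally isomorphic to the adjoint $(\cEnd_{\cO}(\cM),\sigma_q,f_q)$ of some regular quadratic module $(\cM,q)$ of rank $2n$, and after a further refinement $(\cM,q)$ may be taken hyperbolic, i.e.\ equal to $(\HH(\cV),q_{2n})$ with $\cV=\cO^n$. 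By the functoriality \eqref{eq_Clifford_functorial} together with \Cref{Clifford_iso}, we then obtain a local isomorphism of algebras with involution $(\Cl(\cA,\sigma,f),\underline{\sigma})\cong(\Cl_0(\HH(\cV),q_{2n}),\underline{\sigma}_0)$, so it suffices to treat this explicit object.

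Next I invoke the computations of \Cref{split_Clifford}. The isomorphism $\Phi_0$ of \eqref{eq_Phi_not} identifies $\Cl_0(\HH(\cV),q_{2n})$ with $\cEnd_{\cO}(\wedge_0\cV)\times\cEnd_{\cO}(\wedge_1\cV)$; as $\wedge_0\cV$ and $\wedge_1\cV$ are free $\cO$--modules (of rank $2^{n-1}$), both factors are Azumaya, which proves the local product decomposition and shows the centre is locally $\cO\times\cO$. By \Cref{b_wedge_properties}\ref{b_wedge_properties_iii} the involution $\underline{\sigma}$ corresponds under $\Phi$ to the adjoint involution $\tau$ of $b_\wedge$; since $\underline{\sigma}$ preserves the even part, $\tau$ preserves the subalgebra $\cEnd_{\cO}(\wedge_0\cV)\times\cEnd_{\cO}(\wedge_1\cV)$, and $\underline{\sigma}_0$ corresponds under $\Phi_0$ to the restriction $\tau_0$ of $\tau$ to it.

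The key point is how $b_\wedge$ interacts with the parity grading on $\wedge\cV$. By \Cref{b_wedge_formula}, $b_\wedge(v_I,v_J)\neq 0$ forces $J=I^c$, and $|I|+|I^c|=n$. If $n$ is odd, $v_I$ and $v_{I^c}$ always lie in opposite graded pieces, so $b_\wedge$ vanishes on each of $\wedge_0\cV$, $\wedge_1\cV$ and puts them into perfect duality; hence $\tau_0$ does not preserve the product decomposition but interchanges the two factors (the adjoint of a grading-preserving endomorphism $(B_0,B_1)$ has $\wedge_0$-component determined, through $b_\wedge$, by $B_1$ and $\wedge_1$-component by $B_0$), so it acts non-trivially on the centre --- this is \ref{even_invol_type_i}. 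If $n$ is even, $v_I$ and $v_{I^c}$ lie in the same graded piece, so $b_\wedge$ restricts to bilinear forms $b_{\wedge}^{0}$ on $\wedge_0\cV$ and $b_{\wedge}^{1}$ on $\wedge_1\cV$ with $\wedge_0\cV\perp\wedge_1\cV$, and these restrictions are again regular because, by the matrix description in the proof of \Cref{b_wedge_properties}\ref{b_wedge_properties_ii}, the ``generalized permutation matrix'' of $b_\wedge$ stays invertible upon restriction to even- or odd-sized index sets when $n$ is even. Then $\tau_0=\sigma_{b_{\wedge}^{0}}\times\sigma_{b_{\wedge}^{1}}$ fixes the centre, and by \Cref{b_wedge_properties}\ref{b_wedge_properties_i} the two restricted forms are symmetric when $n\equiv 0\pmod 4$ --- so the adjoint involutions are orthogonal, giving \ref{even_invol_type_ii} --- and alternating when $n\equiv 2\pmod 4$ --- so the adjoint involutions are symplectic, giving \ref{even_invol_type_iii}.

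The only genuinely delicate step is this initial reduction: that fppf-locally a quadratic triple of degree $2n$ is the adjoint of the hyperbolic module $(\HH(\cV),q_{2n})$ (equivalently, that a regular quadratic module of even rank is fppf-locally hyperbolic), together with the base-change compatibility that licenses checking the statement fppf-locally. Once one is in the split case, the rest is a direct reading-off from \Cref{split_Clifford} and \Cref{b_wedge_properties}, with the only arithmetic being the parity bookkeeping for $|I|+|I^c|=n$.
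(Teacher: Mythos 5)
Your proof is correct and follows essentially the same route as the paper: reduce fppf-locally to the split hyperbolic case and read everything off from the explicit form $b_\wedge$ of \Cref{split_Clifford}, using \Cref{b_wedge_properties} to identify $\underline{\sigma}_0$ with the adjoint involution and to determine its type from the (anti)symmetry of the restricted forms. The only difference is that you spell out the fppf-local splitting of the quadratic triple and the duality argument for part \ref{even_invol_type_i} in more detail than the paper's terse version, which instead exhibits the explicit element $(0,\Id)$ not fixed by $\tau$.
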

\begin{proof}
Locally we will be in the case of \Cref{split_Clifford} where it is clear that
\[
\Cl_0(\HH(\cV),q_{2n}) \iso \cEnd_{\cO}(\wedge_0 \cV)\times \cEnd_{\cO}(\wedge_1 \cV)
\]
is a Cartesian product of two Azumaya algebras. By \Cref{b_wedge_properties}\ref{b_wedge_properties_iii} we may prove the claims for the adjoint involution $\tau$ of $b_\wedge$ restricted to the even Clifford algebra.

\noindent\ref{even_invol_type_i}: When $n$ is odd, if the basis element $v_I$ is in $\wedge_0 \cV$, then $v_{I^c} \in \wedge_1 \cV$ and vice-versa. Let $\varphi = (0,\Id) \in \cEnd_{\cO}(\wedge_0 \cV)\times \cEnd_{\cO}(\wedge_1 \cV)$. Then for some $I\subseteq [n]$ with $|I|$ even we have $b_\wedge(v_I,v_{I^c}) = \pm 1$ and
\[
b_\wedge(v_I,v_{I^c}) = b_\wedge(v_I,\varphi(v_{I^c})) = b_\wedge((\tau(\varphi))(v_I),v_{I^c}).
\]
Therefore, $\tau(\varphi)\neq \varphi$, demonstrating that $\tau$ acts non-trivially on the center.

\noindent\ref{even_invol_type_ii}, \ref{even_invol_type_iii}: When $n$ is even, the bilinear form $b_\wedge$ restricts to a regular bilinear form on both $\wedge_0 \cV$ and $\wedge_1\cV$. These forms are symmetric or alternating when $b_\wedge$ is, and so these claims follow from \Cref{b_wedge_properties}\ref{b_wedge_properties_i}.
\end{proof}

\subsection{Algebraic Groups}
In our setting over a scheme, algebraic groups will be sheaves of groups on $\Sch_S$. In particular, we are interested in groups of type $D$. We recall the definitions of some key groups from \cite{CF}.
\begin{defn}\label{defn_GL_PGL}
Let $\cB$ be an $\cO$--algebra. The \emph{general linear group} of $\cB$ is the subsheaf of invertible elements
\begin{align*}
\GL_{\cB} \colon \Sch_S &\to \Grp \\
T &\mapsto \cB(T)^\times .
\end{align*}

The \emph{projective general linear group} is $\PGL_{\cB} = \cAut_{\cO\mathrm{\mdash alg}}(\cB)$. If $\cM$ is an $\cO$--module, we set $\GL_{\cM} = \GL_{\cEnd_{\cO}(\cM)} = \cAut_{\cO\mathrm{\mdash mod}}(\cM)$ and $\PGL_{\cM} = \PGL_{\cEnd_{\cO}(\cM)}$. We set $\GL_n = \GL_{\cO^n} \cong \GL_{\Mat_n(\cO)}$ and $\PGL_n$ similarly.
\end{defn}

\begin{defn}\label{defn_runity}
The group of $n^{\textnormal{th}}$--roots of unity is
\begin{align*}
\runity_n \colon \Sch_S &\to \Grp \\
T &\mapsto \{x \in \cO(T) \mid x^n = 1\}.
\end{align*}
\end{defn}

Our primary focus will be on groups of type $D$, which are those related to quadratic forms and quadratic triples.
\begin{defn}\label{defn_O_q}
Let $(\cM,q)$ be a regular quadratic module. The \emph{orthogonal group} is the group of automorphisms which fix $q$,
\begin{align*}
\bO_q \colon \Sch_S &\to \Grp \\
T &\mapsto \{\varphi \in \GL_{\cM}(T) \mid q|_T \circ \varphi = q|_T\}.
\end{align*}
When $(\cM,q)=(\HH(\cV),q_{2n})$ is the even rank hyperbolic quadratic form of \Cref{split_Clifford}, we write $\bO_{q_{2n}} = \bO_{2n}$.
\end{defn}

\begin{defn}\label{defn_O_PGO}
Let $(\cA,\sigma,f)$ be a quadratic triple. The \emph{orthogonal group} of $(\cA,\sigma,f)$ is
\begin{align*}
\bO_{(\cA,\sigma,f)} \colon \Sch_S &\to \Grp \\
T &\mapsto \{ a \in \cA(T) \mid a\sigma(a)=1,\; f|_T \circ \Inn(a) = f|_T \}
\end{align*}
where $\Inn(a) \colon \cA|_T \to \cA|_T$ is the inner automorphism defined by the section $a \in \cA(T)$.

The \emph{projective orthogonal group} is the group of quadratic triple automorphisms
\[
\PGO_{(\cA,\sigma,f)} = \cAut(\cA,\sigma,f).
\]
If $(\cM,q)$ is a regular quadratic module, we set $\PGO_{q} = \PGO_{(\cEnd_{\cO}(\cM),\sigma_q,f_q)}$. For $(\HH(\cV),q_{2n})$ of \Cref{split_Clifford}, we write $\PGO_{2n}$.
\end{defn}

By \cite[4.4.0.44]{CF}, for a regular quadratic module $(\cM,q)$, there is a canonical isomorphism $\bO_q \cong \bO_{(\cEnd_{\cO}(\cM),\sigma_q,f_q)}$. For a quadratic triple $(\cA,\sigma,f)$, the canonical projection
\begin{align*}
\pi_{\bO}\colon \bO_{(\cA,\sigma,f)} &\surj \PGO_{(\cA,\sigma,f)} \\
a &\mapsto \Inn(a)
\end{align*}
is a surjective map of sheaves with kernel isomorphic to $\runity_2$.

Let $(\cM,q)$ be a regular quadratic module. We have group homomorphisms
\begin{align}
\bC \colon \bO_q &\to \cAut(\Cl(\cM,q),\underline{\sigma}), \text{ and} \label{eq_Clifford_actions} \\
\bC' \colon \PGO_q &\to \cAut(\Cl_0(\cM,q),\underline{\sigma}_0). \nonumber
\end{align}
For $B\in \bO_q$, its image $\bC(B)$ is uniquely defined by $(\bC(B))(m) = B(m)$ for all $m\in \cM$. For $\psi \in \PGO_q$, its image $\bC'(\psi)$ is uniquely defined by $(\bC'(\psi))(m_1\otimes m_2) = (\varphi_{b_q}^{-1}\circ \psi \circ \varphi_{b_q})(m_1\otimes m_2)$ for $m_1,m_2 \in \cM$ using the isomorphism $\varphi_{b_q}$ of \eqref{eq_bilinear_iso}. Of course, for $\psi \in \PGO_q$, its image under $\bC'$ is also defined by the functoriality of the Clifford algebra, namely $\bC'(\psi) = \Cl(\psi)$ as in \eqref{eq_Clifford_functorial}. Note that the image of $\bC$ stabilizes the even Clifford algebra, and so by abuse of notation we will also discuss the homomorphism
\[
\bC \colon \bO_q \to \cAut(\Cl_0(\cM,q),\underline{\sigma}_0).
\]

\begin{lem}\label{action_on_Clifford}
The diagram
\[
\begin{tikzcd}
\bO_q \arrow{d}{\pi_{\bO}} \arrow{dr}{\bC} & \\
\PGO_q \arrow{r}{\bC'} & \bAut(\Cl_0(\cM,q),\underline{\sigma}_0)
\end{tikzcd}
\]
commutes.
\end{lem}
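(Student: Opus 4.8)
The plan is to check commutativity of the triangle over each $T\in\Sch_S$ and then on a generating set of the even Clifford algebra. Fix $T\in\Sch_S$ and a section $B\in\bO_q(T)$. By definition $\pi_{\bO}(B)=\Inn(B)\in\PGO_q(T)$ is the inner automorphism of $\cEnd_{\cO}(\cM)|_T$ determined by $B$, so the content of \Cref{action_on_Clifford} is that the two $\cO|_T$--algebra automorphisms $\bC(B)$ and $\bC'(\Inn(B))$ of $\Cl_0(\cM,q)|_T$ coincide; once they agree as algebra maps they automatically agree as maps of algebras with involution. First I would observe that, since the image of $\cM$ generates $\Cl(\cM,q)$ as an $\cO$--algebra, the products $m_1m_2$ with $m_1,m_2$ sections of $\cM$ generate $\Cl_0(\cM,q)$ as an $\cO$--algebra; hence it suffices to check that $\bC(B)$ and $\bC'(\Inn(B))$ agree on every such product. (If the sheafification implicit in ``generates as an algebra'' is a concern, one may first pass to an fppf cover over which $\cM$ is free and $q$ is hyperbolic, reducing to the concrete situation of \Cref{split_Clifford}; the computations below are uniform in any case.)

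On the one hand $\bC(B)$ is multiplicative and satisfies $\bC(B)(m)=B(m)$ for $m\in\cM$, so $\bC(B)(m_1m_2)=B(m_1)\,B(m_2)$. On the other hand, $\bC'(\Inn(B))$ acts on the product $m_1m_2$ as the element of $\Cl_0(\cM,q)$ obtained by applying $\varphi_{b_q}^{-1}\circ\Inn(B)\circ\varphi_{b_q}$ to $m_1\otimes m_2$ and then multiplying out in $\Cl_0(\cM,q)$, where $\varphi_{b_q}$ is the isomorphism of \eqref{eq_bilinear_iso}; this is exactly the defining property of $\bC'$ recalled after \eqref{eq_Clifford_actions}, unwound through the isomorphism $\Phi$ of \Cref{Clifford_iso}. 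The key step is then a short computation: $\varphi_{b_q}(m_1\otimes m_2)$ is the endomorphism $x\mapsto b_q(m_1,x)\,m_2$, conjugating it by $B$ gives $y\mapsto b_q(m_1,B^{-1}y)\,B(m_2)$, and since $B\in\bO_q$ fixes $q$ it fixes the polar form $b_q$, so $b_q(m_1,B^{-1}y)=b_q(B(m_1),y)$; hence the conjugated endomorphism is $\varphi_{b_q}(B(m_1)\otimes B(m_2))$. Therefore $(\varphi_{b_q}^{-1}\circ\Inn(B)\circ\varphi_{b_q})(m_1\otimes m_2)=B(m_1)\otimes B(m_2)$, which gives $\bC'(\Inn(B))(m_1m_2)=B(m_1)\,B(m_2)$.

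Comparing the two expressions, $\bC(B)$ and $\bC'(\Inn(B))$ agree on all generators $m_1m_2$ of $\Cl_0(\cM,q)|_T$, hence they are equal; this is the identity $\bC=\bC'\circ\pi_{\bO}$ asserted by the lemma. I do not expect a real obstacle here: the only point requiring attention is the bookkeeping in the previous paragraph, namely making sure that the submodule of $\Cl_0(\cM,q)$ spanned by the products $m_1m_2$ is identified with $\cM\otimes_{\cO}\cM$ in precisely the way used to set up $\bC'$ (through $\varphi_{b_q}$ and $\Phi$), so that the formula for $\bC'$ applies to these products on the nose. Since this identification is built into the definition of $\bC'$, this amounts to unwinding definitions rather than a genuine difficulty.
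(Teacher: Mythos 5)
Your proposal is correct and follows essentially the same route as the paper: the heart of both arguments is the computation $(\varphi_{b_q}^{-1}\circ\Inn(B)\circ\varphi_{b_q})(m_1\otimes m_2)=B(m_1)\otimes B(m_2)$, from which the identity $\bC=\bC'\circ\pi_{\bO}$ follows on the generators of $\Cl_0(\cM,q)$. The only cosmetic difference is that you justify $b_q(m_1,B^{-1}y)=b_q(B(m_1),y)$ by $B$-invariance of the polar form, while the paper uses the equivalent relation $\sigma_q(B)=B^{-1}$.
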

\begin{proof}
Let $B \in \bO_q$. By definition, the morphism $B$ satisfies $\sigma(B)=B^{-1}$, equivalently $B=\sigma(B^{-1})$. For $m_1\otimes m_2 \in \cM\otimes_{\cO}\cM$ we can compute
\begin{align*}
(\varphi_{b_q}^{-1}\circ \pi_{\bO}(B) \circ \varphi_{b_q})(m_1\otimes m_2) &= (\varphi_{b_q}^{-1} \circ \pi_{\bO}(B))(b_q(m_1,\und)m_2) \\
&= \varphi_{b_q}^{-1}(B\circ b_q(m_1,\und)m_2 \circ B^{-1}) \\
&= \varphi_{b_q}^{-1}(b_q(m_1,B^{-1}(\und))\cdot B(m_2)) \\
&= \varphi_{b_q}^{-1}(b_1(B(m_1),\und)\cdot B(m_2))\\
&= B(m_1)\otimes B(m_2).
\end{align*}
Therefore, the same thing holds in the Clifford algebra, justifying that $\bC = \bC' \circ \pi_{\bO}$.
\end{proof}
Since both of the homomorphisms in \eqref{eq_Clifford_actions} factor through $\cAut(\Cl_0(\cM,q),\underline{\sigma}_0)$, twisted versions of these maps also exist. In particular, for a quadratic triple $(\cA,\sigma,f)$ there is a commutative diagram
\begin{equation}\label{eq_twisted_Clifford_actions}
\begin{tikzcd}
\bO_{(\cA,\sigma,f)} \arrow{d}{\pi_{\bO}} \arrow{dr}{\bC} &  \\
\PGO_{(\cA,\sigma,f)} \arrow{r}{\bC'} & \cAut(\Cl(\cA,\sigma,f),\underline{\sigma})
\end{tikzcd}
\end{equation}
where the maps locally agree with the ones in \eqref{eq_Clifford_actions}.

\subsection{Twisting via Torsors}\label{cohomology_twisting}
Let $\bG \colon \Sch_S \to \Grp$ be any sheaf of groups. A \emph{$\bG$--torsor} is a sheaf of sets $\cP \colon \Sch_S \to \Grp$ with a right $\bG$--action, i.e., a natural transformation $\cP\times \bG \to \cP$ such that
\begin{enumerate}[label={\rm(\roman*)}]
\item \label{torsor_defn_i} for all $T \in \Sch_S$, the map $\cP(T)\times \bG(T) \to \cP(T)$ defines a simply transitive right $\bG(T)$--action on $\cP(T)$, and
\item \label{torsor_defn_ii} there exists a cover $\{T_i \to S\}_{i\in I}$ over which $\cP(T_i)\neq \O$ for all $i\in I$.
\end{enumerate}
As \ref{torsor_defn_ii} above hints, a torsor $\cP$ may have $\cP(T) = \O$ for some $T\in \Sch_S$ and in this case the map $\O \times \bG(T) \to \O$ vacuously gives a simply transitive action of $\bG(T)$ on $\O$. A morphism of $\bG$--torsors is a natural transformation of sheaves which is equivariant with respect to the right $\bG$--actions. Any morphism of torsors must be an isomorphism. The \emph{trivial torsor} is $\bG$ itself viewed as a sheaf of sets with right action coming from right multiplication.

We will use contracted products with respect to torsors. If $\cP$ is a $\bG$--torsor and $\cF$ is another sheaf of sets equipped with a left action of $\bG$, then the contracted product as defined in \cite[2.2.2.9]{CF} is
\[
\cP \wedge^{\bG} \cF = ((\cP \times \cF)/\sim)^\sharp,
\]
where the $\sharp$ denotes sheafification of the presheaf $\cP \times \cF$ modulo the equivalence relation $(p\cdot g,f)\sim (p,g\cdot f)$ for all appropriate sections $g\in \bG$, $p\in \cP$, and $f \in \cF$. We call $\cP \wedge^{\bG} \cF$ the \emph{twist of $\cF$ by $\cP$}.

Two sheaves $\cF,\cG$ on $\Sch_S$ are called \emph{twisted forms} of one another if there exists a cover $\{T_i \to S\}_{i\in I}$ such that $\cF|_{T_i} \cong \cG|_{T_i}$ for all $i\in I$. In this case, the sheaf of internal isomorphisms
\begin{align*}
\cIsom(\cF,\cG) \colon \Sch_S &\to \Sets \\
T &\mapsto \Isom(\cF|_T,\cG|_T)
\end{align*}
is an $\cAut(\cF)$--torsor. Furthermore, $\cIsom(\cF,\cG)\wedge^{\cAut(\cF)} \cF \cong \cG$. We will primarily use this in the context of quadratic triples. For example, if $(\cA,\sigma,f)$ is a quadratic triple of constant rank $2n$, then the sheaf of quadratic triple isomorphisms
\[
\cP = \cIsom((\cEnd_{\cO}(\HH(\cV)),\sigma_{q_{2n}},f_{q_{2n}}),(\cA,\sigma,f))
\]
is a $\PGO_{2n}$--torsor and the twist of $(\cEnd_{\cO}(\HH(\cV)),\sigma_{q_{2n}},f_{q_{2n}})$ by $\cP$ is $(\cA,\sigma,f)$.

\section{The Canonical Semi-Trace on Clifford Algebras}\label{Clifford_triples}
Here we extend the construction of the canonical semi-trace on the Clifford algebra of a quadratic triple from \cite{DQ21} where it was first defined over a field of characteristic $2$. Their construction relies on the existence of an element in the Azumaya algebra with trace $1$, however there may fail to be such a global section when working over a scheme. Therefore, we present an alternate construction which shares many of the same properties and which agrees with their construction over fields.

\subsection{When $\cA$ is of Degree $\geq 8$}\label{A_degree_8}
We recall the definition from \cite{DQ21}.
\begin{defn}[{\cite[3.3]{DQ21}}]
Let $(A,\sigma,f)$ be a central simple algebra of degree $2n\geq 8$ over a field $\FF$ with quadratic pair $(\sigma,f)$. Assume that $n$ is even and, in the case that $\Char(\FF)\neq 2$, that $n\equiv 0 \pmod{4}$. Given $\lambda \in A$ with $\Trd_A(\lambda)=1$, the semi trace
\begin{align*}
\underline{f} \colon \Sym(\mathrm{Cl}(A,\sigma,f),\underline{\sigma}) &\to \FF \\
s &\mapsto \Trd_{\mathrm{Cl}(A,\sigma,f)}(c(\lambda)s),
\end{align*}
where $c \colon A \to \mathrm{Cl}(A,\sigma,f)$ is the canonical mapping, does not depend on $\lambda$. It is called the \emph{canonical semi-trace} on $(\mathrm{Cl}(A,\sigma,f),\underline{\sigma})$.
\end{defn}
The assumptions on $n$ are to ensure that the involution $\underline{\sigma}$ is orthogonal, see \Cref{even_invol_type}. This definition works because, on account of the trace form $(a,b)\mapsto \Trd_A(ab)$ on $A$ being a regular symmetric bilinear form, there will be an element in $A$ with non-zero trace, and then since $\FF$ is a field the element can be scaled to have trace $1$ as required. In fact, this property is also preserved over affine schemes.
\begin{lem}\label{affine_trace_1}
Let $\cA$ be an Azumaya $\cO$--algebra. For all $U \in \Aff_S$, there exists a section $a\in \cA(U)$ such that $\Trd_{\cA}(a) = 1 \in \cO(U)$.
\end{lem}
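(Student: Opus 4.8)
The plan is to reduce to the local structure theory of Azumaya algebras and then use a partition-of-unity argument to produce the global section. First I would recall that $\cA(U)$ is an Azumaya $\cO(U)$-algebra for every affine $U \in \Aff_S$, by condition (ii) in the definition of Azumaya algebra recalled in the preliminaries, so it suffices to prove the purely ring-theoretic statement: an Azumaya algebra $A$ over a commutative ring $R$ contains an element of reduced trace $1$. Since $A$ is a finitely generated projective faithful $R$-module and locally, in the Zariski topology on $\Spec R$, isomorphic to a matrix algebra $\Mat_{n_i}(R_{\fp})$ (with $n_i > 0$, using that the rank is everywhere positive), the reduced trace $\Trd \colon A \to R$ is a locally split surjection of $R$-modules: over each point $\fp$ the matrix unit $E_{11}$, or any idempotent of rank one, maps to $1$. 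In particular $\Trd$ is surjective as a map of sheaves on $\Spec R$.

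The key step is to pass from this local surjectivity to a global section. Because $\Trd \colon A \to R$ is a surjective $R$-module homomorphism onto the free (hence projective) module $R$, it admits an $R$-linear section $s \colon R \to A$, and then $a := s(1) \in A$ satisfies $\Trd(a) = 1$. Concretely, surjectivity can be checked after localizing: for each maximal ideal $\fm$ of $R$ there is $a_\fm \in A$ with $\Trd(a_\fm) \notin \fm$; then the ideal of $R$ generated by all elements of the form $\Trd(a)$, $a \in A$, is not contained in any maximal ideal, hence equals $R$, so one can write $1 = \sum_{k} r_k \Trd(a_k) = \Trd\big(\sum_k r_k a_k\big)$ for finitely many $r_k \in R$ and $a_k \in A$, and $a = \sum_k r_k a_k$ is the desired element. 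Translating back, $a \in \cA(U)$ has $\Trd_{\cA}(a) = 1 \in \cO(U)$.

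The only point requiring care is the claim that $\Trd$ is surjective in the first place, i.e. that the reduced trace of a rank-$n$ Azumaya algebra over a local ring hits $1$. This follows because the reduced trace is compatible with base change and with the local identification $A_\fp \cong \Mat_n(R_\fp)$ under which it becomes the ordinary matrix trace, which visibly sends $E_{11} \mapsto 1$; alternatively, one invokes that the trace form $(x,y) \mapsto \Trd(xy)$ is a unimodular (regular) $R$-bilinear form on $A$, as used implicitly in the discussion preceding the lemma, which forces $\Trd$ to be a split surjection directly. I expect this surjectivity input to be the main obstacle only in the sense of citing the right fact from the theory of Azumaya algebras over rings (e.g. \cite{Ford} or \cite{Knus}); the rest is the standard ``locally-nonvanishing ideal is the unit ideal'' manoeuvre, which genuinely uses that $U$ is affine.
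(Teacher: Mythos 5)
Your overall strategy is the same as the paper's: reduce via the affineness of $U$ to the statement that an Azumaya algebra $A$ over a commutative ring $R$ contains an element of reduced trace $1$. The paper stops there and cites \cite[11.1.6]{Ford} for this ring-theoretic fact, whereas you re-prove it; your ``trace ideal is the unit ideal'' manoeuvre ($\Trd(A)$ is an ideal of $R$ not contained in any maximal ideal, hence $1=\sum_k r_k\Trd(a_k)=\Trd(\sum_k r_k a_k)$) is a correct and standard way to do this, and it is indeed where the affineness is genuinely used.

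However, your justification of the key local input is flawed as written. An Azumaya algebra is \emph{not} Zariski-locally a matrix algebra: the localizations $A_{\mathfrak{p}}$ (and even $A\otimes_R\kappa(\mathfrak{p})$) can be nonsplit, e.g.\ the Hamilton quaternions over $R=\mathbb{R}$, and splitting is only available fppf- or \'etale-locally, as in conditions (iii)--(iv) of the preliminaries. So you cannot point at $E_{11}$ in $\Mat_n(R_{\mathfrak{p}})$. Likewise, unimodularity of the trace form does not by itself make the particular functional $\Trd=\Trd(1\cdot\und)$ a (split) surjection; under the identification $A\cong A^{*}$ it merely translates the problem into showing that $1\in A$ admits a functional taking value $1$ on it, which is what is to be proved. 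The repair is easy and keeps your skeleton intact: the reduced trace commutes with base change, and for every maximal ideal $\mathfrak{m}$ the algebra $A\otimes_R\kappa(\mathfrak{m})$ is a central simple algebra over the field $\kappa(\mathfrak{m})$ whose trace form is nondegenerate (this may be checked after a further field extension splitting the algebra, where the form has Gram matrix a permutation matrix in the basis of matrix units, in every characteristic); since $1\neq 0$, nondegeneracy forces $\Trd(1\cdot\und)\neq 0$ on $A\otimes_R\kappa(\mathfrak{m})$, hence $\Trd$ is surjective modulo $\mathfrak{m}$, i.e.\ $\Trd(A)\not\subseteq\mathfrak{m}$. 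With that substitution your argument is complete and is essentially the content of the result the paper cites from \cite{Ford}.
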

\begin{proof}
Since $U$ is an affine scheme, the algebra $\cA(U)$ is an Azumaya algebra over the ring $\cO(U)$. The result is then \cite[11.1.6]{Ford}.
\end{proof}

However, if $S$ is not affine then we may fail to have a global section with trace $1$. This is demonstrated in the following example.
\begin{ex}\label{no_trace_1}
We recall the example constructed in \cite[7.1]{GNR}. There, the base scheme is an ordinary elliptic curve $E$ defined over a field $\FF$ of characteristic $2$. The authors construct a quaternion $\cO$--algebra $\cQ$, which by \cite[7.2(i)]{GNR} belongs to some quadratic triple $(\cQ,\sigma,f)$. However, by \cite[7.2(ii)]{GNR}, the global sections of this algebra are $\cQ(E) \cong \FF$. Now, let $\{T_i \to E\}_{i\in I}$ be any cover which splits $\cQ$, i.e., $\cQ|_{T_i} \cong \Mat_2(\cO|_{T_i})$ for all $i\in I$. The restrictions maps $\cQ(E) \to \cQ(T_i)$ must then be of the form
\begin{align*}
\FF &\to \Mat_2(\cO(T_i)) \\
c &\mapsto \begin{bmatrix} c & 0 \\ 0 & c \end{bmatrix}.
\end{align*}
In particular, for $c\in \cQ(E)$ we have $\Trd_{\cQ}(c|_{T_i})=2c = 0$ for all $i\in I$ since $\FF$ is characteristic $2$. But, this means that $\Trd_{\cQ}(c)=0$ globally, and so there does not exist a section in $\cQ(E)$ with trace $1$.
\end{ex}

Thus, to adapt the definition from \cite{DQ21} we must define our desired semi-trace differently. The key ingredient is the following lemma whose proof is adapted from the proof of \cite[3.2]{DQ21}. Recall that $\slLie_{\cA}$ is the kernel of the trace $\Trd_{\cA} \colon \cA \to \cO$.
\begin{lem}\label{sl_into_Alt}
Let $(\cA,\sigma,f)$ be a quadratic triple of degree $2n$ with $n\geq 3$. Then the canonical mapping $c\colon \cA \to \Cl(\cA,\sigma,f)$ of \eqref{eq_c_A_inclusion} restricts to
\[
c \colon \slLie_{\cA} \to \cAlt_{(\Cl(\cA,\sigma,f),\underline{\sigma})}.
\]
\end{lem}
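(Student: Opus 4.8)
The statement is local on $S$, since $\cAlt_{(\Cl(\cA,\sigma,f),\underline{\sigma})}$ is an image sheaf: if $c(a)$ becomes an alternating section after passing to an fppf cover, it is alternating. Moreover the canonical mapping $c$, the involution $\underline{\sigma}$, and the submodule $\slLie_{\cA}$ are all stable under base change. Hence it suffices to prove the claim in the split case of \Cref{split_Clifford}, i.e.\ for $(\cA,\sigma,f) = (\cEnd_{\cO}(\HH(\cV)),\sigma_{q_{2n}},f_{q_{2n}})$ with $\HH(\cV) = \cV \oplus \cV^*$ and $\cV = \cO^n$. Under the isomorphism $\Phi_0$ of \eqref{eq_Phi_not}, together with \Cref{Clifford_iso}, the even Clifford algebra $\Cl_0(\HH(\cV),q_{2n})$ is identified with $\Cl(\cEnd_{\cO}(\HH(\cV)),\sigma_{q_{2n}},f_{q_{2n}})$ and the canonical involution with $\underline{\sigma_{q_{2n}}}$; so I would track everything inside $\cEnd_{\cO}(\wedge\cV)$, where $\underline{\sigma}$ becomes the adjoint involution $\tau$ of the bilinear form $b_\wedge$ by \Cref{b_wedge_properties}\ref{b_wedge_properties_iii}.

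First I would spell out the canonical mapping $c$ on $\cEnd_{\cO}(\HH(\cV))$ explicitly in the basis $\{v_1,\dots,v_n,v_1^*,\dots,v_n^*\}$. Using $\varphi_{b_{q_{2n}}}$, an endomorphism $B$ corresponds to a tensor in $\HH(\cV)\otimes\HH(\cV)$, and its image under $c$ is (up to the identifications above) a degree-$\le 2$ element of $\Cl(\HH(\cV),q_{2n})$, hence an element of $\cEnd_{\cO}(\wedge\cV)$ expressed in the operators $\ell_{v_i}, d_{v_i^*}$ and their products. The rank-one endomorphisms $x\otimes y \mapsto$ (multiplication-type operator) will have image a product like $\ell_{v_i}\ell_{v_j}$, $d_{v_i^*}d_{v_j^*}$, or $\ell_{v_i}d_{v_j^*}$, plus possible scalar corrections coming from the relation $v_i v_i^* = 1 - v_i^* v_i$. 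The condition $\Trd_{\cA}(B) = 0$ translates (again via the explicit basis) into a single linear constraint on the coefficients of $B$, roughly that a certain "diagonal" sum of the $\ell_{v_i} d_{v_i^*}$-type contributions vanishes. I then compute the action of $\tau$ on each of the basis products $\ell_{v_i}\ell_{v_j}$, $d_{v_i^*}d_{v_j^*}$, $\ell_{v_i}d_{v_j^*}$; the computations in the proof of \Cref{b_wedge_properties}\ref{b_wedge_properties_iii} show $\tau$ fixes $\ell_{v_i}$ and $d_{v_i^*}$, so $\tau$ reverses products, and $\tau(\ell_{v_i}d_{v_j^*}) = d_{v_j^*}\ell_{v_i} = $ (using the Clifford relation) $\delta_{ij} - \ell_{v_i}d_{v_j^*}$ type identity. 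This is what makes the trace-zero locus land in the alternating submodule.

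The mechanism is then: an element $z \in \cEnd_{\cO}(\wedge\cV)$ lies in $\cAlt_{(\cEnd_{\cO}(\wedge\cV),\tau)} = \Img(\Id - \tau)$ iff, roughly, its "symmetric part" relative to $\tau$ vanishes; for each off-diagonal product the term and its $\tau$-image are linearly independent, so such terms automatically contribute alternating elements (namely $\ell_{v_i}\ell_{v_j} = \tfrac12(\Id-\tau)$ of something when $2$ is invertible, and more carefully $\ell_{v_i}\ell_{v_j} = \ell_{v_i}\ell_{v_j} - \tau(\cdots)$ argument that works integrally since $\ell_{v_i}\ell_{v_j} = -\ell_{v_j}\ell_{v_i}$ is itself $\tfrac12(\Id-\tau)$-free, i.e.\ one shows $\ell_{v_i}\ell_{v_j}$ equals $g - \tau(g)$ for an explicit $g$). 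The only potentially dangerous terms are the "diagonal" $\ell_{v_i}d_{v_i^*}$ pieces, where $\ell_{v_i}d_{v_i^*} + \tau(\ell_{v_i}d_{v_i^*}) = \ell_{v_i}d_{v_i^*} + d_{v_i^*}\ell_{v_i}$ is a nonzero symmetrized element (essentially $1$), and the sum of these symmetric defects is precisely controlled by $\Trd_{\cA}(B)$. So when $\Trd_{\cA}(B) = 0$ the total symmetric defect is zero and $c(B)$ is alternating. The hypothesis $n \ge 3$ enters to guarantee that $\underline\sigma$ is of the expected type on the even Clifford algebra (so that $\cAlt$ behaves as in \Cref{even_invol_type}) and, more concretely, to ensure $\wedge\cV$ has enough room that the identity operator $\Id$ itself, which is the prototypical symmetrized element, is not accidentally alternating — which would trivialize the statement in the wrong way — and that the linear relation pinning down the symmetric defect is genuinely the trace.

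\textbf{Main obstacle.} The delicate point is the integral bookkeeping of the scalar corrections from $v_i v_i^* = 1 - v_i^* v_i$: showing that the symmetrized part $(\Id + \tau)(c(B))$ equals $\Trd_{\cA}(B)$ times a fixed symmetrized element (up to alternating stuff), without dividing by $2$, so that the argument is valid over an arbitrary base including characteristic $2$. Getting the signs and the diagonal/off-diagonal split exactly right in $\cEnd_{\cO}(\wedge\cV)$ — in particular verifying that every off-diagonal $\ell d$, $\ell\ell$, $dd$ contribution lies in $\Img(\Id-\tau)$ on the nose and pinning the diagonal contribution to the reduced trace — is where the real work is; this is the scheme-theoretic analogue of \cite[3.2]{DQ21}, and I expect to follow that computation closely while replacing "choose an element of trace $1$" by "restrict to $\slLie_{\cA}$."
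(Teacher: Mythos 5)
There is a genuine gap. Your reduction to the split case and the plan to check a basis of $\slLie_{\cA}$ element-by-element match the paper's strategy, but the mechanism you propose for the crucial step is unsound. You argue that each ``off-diagonal'' term is automatically alternating because it and its $\tau$-image are linearly independent (equivalently, because it is skew-symmetric and can formally be written as $\tfrac12(\Id-\tau)$ of something); integrally, and in particular when $2=0$, this is exactly what one is \emph{not} allowed to conclude -- the whole point of the quadratic-pair formalism is that $\cAlt \subsetneq \cSkew$ in characteristic $2$. The paper itself contains the counterexample to your heuristic: for $n=2$ the trace-zero element $v_1\otimes v_2$ maps to $v_1v_2$, which is skew-symmetric under $\underline{\sigma}$ but \emph{not} alternating (\Cref{degree_4_trace_remark}, \Cref{degree_4_Alt}), so your off-diagonal argument, which makes no use of $n\geq 3$, would ``prove'' the lemma in a case where it is false. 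You do flag the integral bookkeeping as the main obstacle, but the proposal does not contain the idea that resolves it.

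That idea, in the paper's proof, is the explicit production of preimages under $\Id-\underline{\sigma}$ using a \emph{third index}: for a product $xy$ with $x,y$ among $v_i,v_j,v_i^*,v_j^*$ ($i\neq j$), one chooses $k\notin\{i,j\}$ (this is precisely where $n\geq 3$ enters), notes that $xy$ commutes with $v_k,v_k^*$, and verifies directly from the relation $v_kv_k^*=1-v_k^*v_k$ that
\[
xy \;=\; (xyv_kv_k^*)-\underline{\sigma}(xyv_kv_k^*),
\]
with an analogous identity handling the diagonal differences $xx^*-yy^*$ (which, together with products $v_iv_i$, $v_i^*v_i^*$, form the rest of a basis of $\slLie$ under $\varphi_{b_{q_{2n}}}$). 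Your proposal instead attributes the hypothesis $n\geq 3$ to the type of $\underline{\sigma}$ and to $\Id$ not being ``accidentally alternating''; neither is the issue (indeed, for $n=2$ and $2=0$ the identity \emph{is} alternating by \Cref{degree_4_Alt}), and the statement about $\underline{\sigma}$ being of a particular type is not needed for this lemma at all. Working inside $\cEnd_{\cO}(\wedge\cV)$ via $\Phi$ and $\tau$ is workable but only adds bookkeeping; the missing content is the explicit $g$ with $xy=g-\underline{\sigma}(g)$, valid without dividing by $2$, and a correct accounting of the diagonal part via a basis of trace-zero differences rather than a ``symmetric defect equals trace'' slogan.
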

\begin{proof}
It is sufficient to show that $c$ maps $\slLie_{\cA}$ into $\cAlt_{(\Cl(\cA,\sigma,f),\underline{\sigma})}$ locally, and so we may assume that $(\cA,\sigma,f)$ is the adjoint quadratic triple to $(\HH(\cV),q_{2n})$ from \Cref{split_Clifford}. Under the isomorphism $\varphi_{b_{q_{2n}}} \colon \HH(\cV)\otimes_{\cO} \HH(\cV) \iso \cEnd_{\cO}(\HH(\cV))$ of \eqref{eq_bilinear_iso}, the submodule $\slLie_{\cEnd_{\cO}(\HH(\cV))}$ has a basis consisting of elements
\begin{align*}
&v_i\otimes v_i, v_i^*\otimes v_i^*, & &\text{ for } i\in [n],\\
&v_i\otimes v_j,v_i\otimes v_j^*, v_j\otimes v_i^*, v_j^*\otimes v_i^*, & &\text{ for } i<j \in [n],\\
&v_i\otimes v_i^* - v_{i+1}\otimes v_{i+1}^*, v_n\otimes v_n^* - v_n^*\otimes v_n, v_{i+1}^*\otimes v_{i+1} - v_i^*\otimes v_i & &\text{ for } i\in [n-1].
\end{align*}
We show that these are all mapped by $c$ to alternating elements. For the first row, $c(v_i\otimes v_i)=0 $ and $c(v_i^*\otimes v_i^*) = 0$, which is of course alternating. 

For basis elements in the second row, let $x$ be the first tensor factor and $y$ be the second. Then $c(x\otimes y) = xy$ and $yx = -xy$. Furthermore, since $n\geq 3$ there exists $v_k$ with $k\neq i,j$, and therefore $xy$ commutes with both $v_k$ and $v_k^*$. Now, we can compute
\begin{align*}
(xyv_k v_k^*) - \underline{\sigma}(xy v_k v_k^*) &= xyv_k v_k^* - v_k^* v_k yx \\
&= xy v_k v_k^* + xy v_k^*v_k \\
&= xy v_k v_k^* + xy(1-v_k v_k^*) \\
&= xy
\end{align*}
which shows that $c(x\otimes y)$ is an alternating element.

For the last row, $c(v_n\otimes v_n^* - v_n^*\otimes v_n) = v_n v_n^* - v_n^* v_n = v_nv_n^* - \underline{\sigma}(v_n v_n^*)$ is clearly alternating. For the other basis elements in the last row we use a similar trick as in the second row. These elements are all mapped by $c$ to something of the form $x x^* - y y^*$ (considering $(x^*)^* = x$) and we may once again choose $k\in [n]$ such that $x,x^*,y,y^*$ all commute with $v_k$ and $v_k^*$. Then we compute
\begin{align*}
&(xx^*-yy^*)v_kv_k^* - \underline{\sigma}((xx^*-yy^*)v_kv_k^*)\\
=& (xx^*-yy^*)v_kv_k^* - v_k^*v_k(x^* x - y^* y) \\
=& (xx^*-yy^*)v_kv_k^* - (1-v_kv_k^*)(1-xx^* - 1 + yy^*) \\
=& (xx^*-yy^*)v_kv_k^* + (xx^* - yy^*) -v_kv_k^*(xx^*-yy^*) \\
=& (xx^*-yy^*)v_kv_k^* + (xx^* - yy^*) -(xx^*-yy^*)v_kv_k^* \\
=& xx^* - yy^*
\end{align*}
and therefore $c(x\otimes x^* - y\otimes y^*)$ is an alternating element. This finishes the proof.
\end{proof}

\begin{rem}\label{degree_4_trace_remark}
Unfortunately, \Cref{sl_into_Alt} is not true if $(\cA,\sigma,f)$ is of degree $4$. For example, consider the trace zero element $v_1\otimes v_2 \in \HH(\cV)\otimes_{\cO} \HH(\cV)$. We have $c(v_1\otimes v_2) = v_1v_2$, which is then mapped by the isomorphism $\Phi_0$ to the endomorphism $\ell_{v_1}\circ \ell_{v_2}$. Choosing the ordered bases $\{1,v_1\wedge v_2\}$ of $\wedge_0 \cV$ and $\{v_1,v_2\}$ of $\wedge_1 \cV$, we see that $\ell_{v_1}\circ \ell_{v_2}$ has matrix
\[
\left( \begin{bmatrix} 0 & 0 \\ 1 & 0 \end{bmatrix},\begin{bmatrix} 0 & 0 \\ 0 & 0 \end{bmatrix}\right).
\]
However, by \Cref{b_wedge_properties}\ref{b_wedge_properties_iii} the involution $\underline{\sigma}$ appears here as
\[
\left( \begin{bmatrix} a & b \\ c & d \end{bmatrix},\begin{bmatrix} a' & b' \\ c' & d' \end{bmatrix}\right) \mapsto \left( \begin{bmatrix} d & b \\ c & a \end{bmatrix},\begin{bmatrix} d' & b' \\ c' & a' \end{bmatrix}\right)
\]
and therefore the alternating elements are of the form
\[
\left( \begin{bmatrix} a-d & 0 \\ 0 & d-a \end{bmatrix},\begin{bmatrix} a'-d' & 0 \\ 0 & d'-a' \end{bmatrix}\right).
\]
Thus, $c(v_1\otimes v_2)$ is not an alternating element despite being trace zero.
\end{rem}

\Cref{sl_into_Alt} means that when we consider the commutative diagram of $\cO$--modules with exact rows
\begin{equation}\label{eq_rho_diagram}
\begin{tikzcd}[column sep=3ex]
0 \arrow{r} & \slLie_{\cA} \arrow[hookrightarrow]{r} \arrow{d}{c} & \cA \arrow{r}{\Trd_{\cA}} \arrow{d}{c} & \cO \arrow{d}{\exists ! \rho} \arrow{r} & 0 \\
0 \arrow{r} & \cAlt_{(\Cl(\cA,\sigma,f),\underline{\sigma})} \arrow[hookrightarrow]{r} & \Cl(\cA,\sigma,f) \arrow{r}{\pi} & \Cl(\cA,\sigma,f)/\cAlt_{(\Cl(\cA,\sigma,f),\underline{\sigma})} \arrow{r} & 0
\end{tikzcd}
\end{equation}
where $\pi$ is the canonical projection, then there exists a unique well defined morphism $\rho$ induced by $c$. We recall from \cite[1.6.1]{GNR} that we also have another diagram with exact rows, where we occasionally abbreviate $\Cl(\cA,\sigma,f)=\Cl(\cA)$,
\[
\begin{tikzcd}[column sep=3ex]
0 \arrow{r} & \cSkew_{(\Cl(\cA),\underline{\sigma})} \arrow[hookrightarrow]{r} \arrow{d} & \Cl(\cA,\sigma,f) \arrow{r}{\Id+\underline{\sigma}} \arrow{d}{\pi} & \cSymd_{(\Cl(\cA),\underline{\sigma})} \arrow[equals]{d} \arrow{r} & 0 \\
0 \arrow{r} & \cSkew_{(\Cl(\cA),\underline{\sigma})}/\cAlt_{(\Cl(\cA),\underline{\sigma})} \arrow[hookrightarrow]{r} & \Cl(\cA,\sigma,f)/\cAlt_{(\Cl(\cA),\underline{\sigma})} \arrow{r}{\xi} & \cSymd_{(\Cl(\cA),\underline{\sigma})} \arrow{r} & 0.
\end{tikzcd}
\]
Here, $\xi$ is the morphism induced by $\Id+\underline{\sigma}$.
\begin{lem}\label{rho_xi_compatible}
Let $\rho$ and $\xi$ be as in the diagrams above. The map
\[
\cO \xrightarrow{\rho} \Cl(\cA,\sigma,f)/\cAlt_{(\Cl(\cA,\sigma,f),\underline{\sigma})} \xrightarrow{\xi} \cSymd_{(\Cl(\cA,\sigma,f),\underline{\sigma})}
\]
sends $\alpha \mapsto \alpha\cdot 1_{\Cl(\cA,\sigma,f)}$.
\end{lem}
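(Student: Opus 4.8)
The plan is to compute $\xi(\rho(\alpha))$ directly by lifting $\alpha$ along the reduced trace to a section of $\cA$ and then unwinding the defining relations of the Clifford algebra. Since both $\xi\circ\rho$ and the map $\alpha\mapsto\alpha\cdot 1_{\Cl(\cA,\sigma,f)}$ are morphisms of fppf sheaves of $\cO$--modules, and a morphism of fppf sheaves is determined by its restriction to the affine site $\Aff_S$, it suffices to verify the identity on sections over an arbitrary $U\in\Aff_S$.

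So I would fix $U\in\Aff_S$ and $\alpha\in\cO(U)$. By \Cref{affine_trace_1} there is $a_0\in\cA(U)$ with $\Trd_{\cA}(a_0)=1$, so $a:=\alpha a_0$ satisfies $\Trd_{\cA}(a)=\alpha$. Commutativity of the right-hand square of \eqref{eq_rho_diagram} gives $\rho(\alpha)=\rho(\Trd_{\cA}(a))=\pi(c(a))$, where $\pi\colon\Cl(\cA,\sigma,f)\to\Cl(\cA,\sigma,f)/\cAlt_{(\Cl(\cA,\sigma,f),\underline{\sigma})}$ is the projection. Commutativity of the right-hand square of the second diagram above (the one recalled from \cite[1.6.1]{GNR}) gives $\xi(\pi(x))=x+\underline{\sigma}(x)$ for all $x\in\Cl(\cA,\sigma,f)$, so
\[
\xi(\rho(\alpha)) = c(a)+\underline{\sigma}(c(a)) = c(a)+c(\sigma(a)) = c\bigl(a+\sigma(a)\bigr),
\]
using the defining property $\underline{\sigma}(c(b))=c(\sigma(b))$ of the canonical involution and the $\cO$--linearity of $c$. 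Now $a+\sigma(a)\in\cSym_{(\cA,\sigma)}$, so the relation $\cJ_1(\sigma,f)$ in \Cref{defn_triple_Clifford} identifies $c(a+\sigma(a))$ with $f(a+\sigma(a))\cdot 1_{\Cl(\cA,\sigma,f)}$, and the semi-trace axiom $f(b+\sigma(b))=\Trd_{\cA}(b)$ yields $f(a+\sigma(a))=\Trd_{\cA}(a)=\alpha$. Hence $\xi(\rho(\alpha))=\alpha\cdot 1_{\Cl(\cA,\sigma,f)}$ on $U$, and the claim follows.

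This argument is essentially a chain of definitional unwindings, so I do not expect a genuine obstacle. The only point needing a (small) argument is the reduction to the affine case, which is exactly where \Cref{affine_trace_1} is used to produce the lift $a$; without it, one would instead pass to an fppf cover splitting $\Trd_{\cA}$. The one thing to watch is bookkeeping: keeping straight which square of which diagram supplies the identities $\rho\circ\Trd_{\cA}=\pi\circ c$ and $\xi\circ\pi=\Id+\underline{\sigma}$, and reading $f(s)$ as $f(s)\cdot 1_{\Cl(\cA,\sigma,f)}$ under the unital inclusion $\cO\hookrightarrow\Cl(\cA,\sigma,f)$.
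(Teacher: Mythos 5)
Your proof is correct and follows essentially the same route as the paper: lift $\alpha$ locally along $\Trd_{\cA}$, then chase the two commutative squares and compute $c(a)+\underline{\sigma}(c(a))=c(a+\sigma(a))=f(a+\sigma(a))\cdot 1=\Trd_{\cA}(a)\cdot 1=\alpha\cdot 1$, concluding globally by gluing. The only cosmetic difference is how the local lift is produced: the paper invokes surjectivity of $\Trd_{\cA}$ to get a lift on an fppf cover, while you restrict to $\Aff_S$ and scale a trace-one element from \Cref{affine_trace_1}; both are fine.
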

\begin{proof}
Let $\alpha\in \cO$. Since $\Trd_{\cA}$ is surjective, there exists a cover $\{T_i \to S\}_{i\in I}$ and elements $a_i \in \cA(T_i)$ with $\Trd_{\cA}(a_i) = \alpha|_{T_i}$ for all $i\in I$. For each of these, we have
\begin{align*}
((\Id+\underline{\sigma})\circ c)(a_i) &= c(a_i) + \underline{\sigma}(a_i) = c(a_i +\sigma(a_i)) \\
&= f(a_i+\sigma(a_i))\cdot 1 = \Trd_{\cA}(a_i)\cdot 1 = \alpha|_{T_i}\cdot 1.
\end{align*}
Since the diagrams above commute, 
\[
(\xi \circ \rho)(\alpha_{T_i}) = (\xi \circ \pi\circ c)(a_i) = ((\Id +\underline{\sigma})\circ c)(a_i) = \alpha|_{T_i}\cdot 1.
\]
Therefore $(\xi\circ \rho)(\alpha) = \alpha\cdot 1$ globally as claimed.
\end{proof}
In particular, \Cref{rho_xi_compatible} shows $\xi(\rho(1)) = 1_{\Cl(\cA,\sigma,f)} \in \cSymd_{(\Cl(\cA,\sigma,f),\underline{\sigma})}(S)$, which means that the section $\rho(1) \in \Cl(\cA,\sigma,f)/\cAlt_{(\Cl(\cA,\sigma,f),\underline{\sigma})}$ may be used to define a quadratic triple.
\begin{defn}\label{defn_canonical_triple}
Let $(\cA,\sigma,f)$ be a quadratic triple of degree $2n$ with $n\geq 4$ such that $(\Cl(\cA,\sigma,f),\underline{\sigma})$ has an orthogonal involution. Let $\rho\colon \cO \to \Cl(\cA,\sigma,f)/\cAlt_{(\Cl(\cA,\sigma,f),\underline{\sigma})}$ be the map of \eqref{eq_rho_diagram}. We define a semi-trace
\begin{align*}
\underline{f} \colon \cSym_{(\Cl(\cA,\sigma,f),\underline{\sigma})} &\to \cO \\
s &\mapsto \Trd_{\Cl(\cA,\sigma,f)}(\rho(1)\cdot s)
\end{align*}
as in \Cref{quad_forms_and_triples}. We call $\underline{f}$ the \emph{canonical semi-trace} on $(\Cl(\cA,\sigma,f),\underline{\sigma})$ and we call the triple $(\Cl(\cA,\sigma,f),\underline{\sigma},\underline{f})$ the \emph{canonical Clifford algebra with semi-trace}.
\end{defn}
\begin{rem}
The requirement that $\underline{\sigma}$ be orthogonal will be satisfied when $n\equiv 0 \pmod{4}$ by \Cref{even_invol_type}\ref{even_invol_type_ii}. However, it may also be satisfied when $n\equiv 2\pmod{4}$ if $2=0\in \cO$. By \Cref{even_invol_type}\ref{even_invol_type_iii}, $\underline{\sigma}$ will be symplectic in this case, but because $2=0$ it will be orthogonal as well.
\end{rem}

\begin{thm}\label{canonical_f_local}
Let $(\cA,\sigma,f)$ be a quadratic triple over $S$ and let $(\Cl(\cA,\sigma,f),\underline{\sigma},\underline{f})$ be the canonical Clifford algebra with semi-trace. Then, for all $T\in \Sch_S$ for which there exists a section $a\in \cA(T)$ with $\Trd_{\cA}(a)=1 \in \cO(T)$, we have
\[
\underline{f}|_T = \Trd_{\Cl(\cA,\sigma,f)}(c(a) \und )
\]
and this does not depend on the choice of $a \in \cA(T)$. In particular, this definition agrees with the canonical semi-trace of \cite[3.3]{DQ21} when $S = \Spec(\FF)$ for a field $\FF$.
\end{thm}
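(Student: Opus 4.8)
The plan is to compare the two definitions of the semi-trace on $\cSym_{(\Cl(\cA,\sigma,f),\underline{\sigma})}$ by reducing both to a statement about the class $\rho(1) \in (\Cl(\cA,\sigma,f)/\cAlt_{(\Cl(\cA,\sigma,f),\underline{\sigma})})(T)$. The canonical semi-trace $\underline{f}$ is by \Cref{defn_canonical_triple} the map $s \mapsto \Trd_{\Cl(\cA,\sigma,f)}(\rho(1_{\cO})\cdot s)$, where $\rho(1_{\cO})$ makes sense globally because $\Trd_{\cA}$ is surjective; this is well-defined on $\cSym$ precisely because $\cSym$ and $\cAlt$ are orthogonal under the trace form, as recalled after \Cref{eq_c_A_inclusion}. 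So I would first fix $T \in \Sch_S$ with a section $a \in \cA(T)$ satisfying $\Trd_{\cA}(a) = 1 \in \cO(T)$, and observe that the restriction $\rho(1_{\cO})|_T = \rho|_T(1_{\cO}|_T)$ can be computed using the diagram \eqref{eq_rho_diagram} restricted to $\Sch_T$: since $\Trd_{\cA}(a) = 1$, the section $a$ is a preimage of $1$ in the top row over $T$, so by the very construction of $\rho$ (the dashed arrow induced by $c$ on cokernels) we get $\rho(1_{\cO})|_T = \pi(c(a))$, the image of $c(a)$ in $\Cl(\cA,\sigma,f)/\cAlt_{(\Cl(\cA,\sigma,f),\underline{\sigma})}$ over $T$.

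Given that identification, the formula $\underline{f}|_T(s) = \Trd_{\Cl(\cA,\sigma,f)}(\rho(1_{\cO})|_T \cdot s) = \Trd_{\Cl(\cA,\sigma,f)}(c(a)\cdot s)$ is immediate: the trace-form pairing of a class in $\Cl(\cA,\sigma,f)/\cAlt$ against $s \in \cSym$ is computed by lifting the class arbitrarily to $\Cl(\cA,\sigma,f)$ and applying $\Trd_{\Cl(\cA,\sigma,f)}((-)\cdot s)$, and $c(a)$ is one such lift. Independence of the choice of $a$ is then automatic, since any two such $a$ differ by an element of $\slLie_{\cA}(T)$, which $c$ sends into $\cAlt_{(\Cl(\cA,\sigma,f),\underline{\sigma})}(T)$ by \Cref{sl_into_Alt} (valid since $n \geq 3$), and $\cAlt$ pairs to zero against $\cSym$; alternatively it follows formally from the fact that $\rho(1_{\cO})$ itself does not depend on any choices. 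Finally, for the comparison with \cite[3.3]{DQ21}: when $S = \Spec(\FF)$ for a field $\FF$, the trace form on $A$ is a regular symmetric bilinear form, so there exists $\lambda \in A$ with $\Trd_A(\lambda) \neq 0$, and after scaling we may assume $\Trd_A(\lambda) = 1$; then $T = S$ itself satisfies the hypothesis, and the formula just proved reads $\underline{f}(s) = \Trd_{\mathrm{Cl}(A,\sigma,f)}(c(\lambda)s)$, which is exactly the definition in \cite[3.3]{DQ21}, and the independence statement we established recovers theirs.

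The only genuinely delicate point is the first step — carefully justifying that $\rho(1_{\cO})|_T = \pi(c(a))$ from the universal property of the induced map $\rho$ in \eqref{eq_rho_diagram}. This requires checking that restriction to $\Sch_T$ is compatible with the snake-lemma-type construction of $\rho$ (which it is, since restriction is exact on the relevant sheaves and sends the canonical mapping $c$ to the canonical mapping over $T$, by functoriality of the Clifford algebra \eqref{eq_Clifford_functorial}), and then noting that over $T$ the hypothesis gives an actual global section $a$ splitting the surjection $\Trd_{\cA}$ at the point $1$, so the induced map on quotients is forced to send $1_{\cO}|_T$ to the class of $c(a)$. Everything downstream is a formal manipulation of the trace-form pairing and the orthogonality of $\cSym$ and $\cAlt$. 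I would keep the argument short and lean on \Cref{sl_into_Alt}, \Cref{rho_xi_compatible}, and the diagram \eqref{eq_rho_diagram} rather than reproving anything.
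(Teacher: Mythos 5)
Your proposal is correct and follows essentially the same route as the paper: identify $\rho(1)|_T$ with $\pi(c(a))$ via the commutativity of diagram \eqref{eq_rho_diagram}, then use that a semi-trace defined by a class in $\Cl(\cA,\sigma,f)/\cAlt_{(\Cl(\cA,\sigma,f),\underline{\sigma})}$ is computed on $\cSym$ by any lift of that class, so $\underline{f}|_T = \Trd_{\Cl(\cA,\sigma,f)}(c(a)\,\und)$; independence of $a$ and the comparison with \cite[3.3]{DQ21} follow exactly as you say. Your extra care about compatibility of restriction with the construction of $\rho$ is harmless but not needed beyond the commutativity of \eqref{eq_rho_diagram} on sections over $T$.
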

\begin{proof}
By definition of a semi-trace with respect to the element
\[
\rho(1) \in (\Cl(\cA,\sigma,f)/\cAlt_{(\Cl(\cA,\sigma,f),\underline{\sigma})})(S),
\]
whenever there exists $T\in \Sch_S$ with an element $x\in \Cl(\cA,\sigma,f)(T)$ such that $\pi(x) = \rho(1)$, then $\underline{f}|_T = \Trd_{\cA}(x\und)$. If there exists $a \in \cA(T)$ with $\Trd_{\Cl(\cA,\sigma,f)}(a) = 1$, then by commutativity of \eqref{eq_rho_diagram}, the section $c(a) \in \Cl(\cA,\sigma,f)(T)$ is such an element. This justifies the claim.
\end{proof}

The description of $\underline{f}$ given in \Cref{canonical_f_local} allows us to easily show that the definition of the canonical semi-trace is stable under base change.
\begin{cor}\label{canonical_f_stable_base_change}
Let $(\cA,\sigma,f)$ be a quadratic triple of degree $2n$ with $n\geq 4$ such that we have the canonical Clifford algebra with semi-trace $(\Cl(\cA,\sigma,f),\underline{\sigma},\underline{f})$. Let $T\in \Sch_S$ be any scheme. The restricted quadratic triple $(\cA|_T,\sigma|_T,f|_T)$ also satisfies the required assumptions and so there is a canonical $\cO|_T$--algebra with semi-trace $(\Cl(\cA|_T,\sigma|_t,f|_T),\underline{\sigma|_T},\underline{f|_T})$. Then,
\[
(\Cl(\cA|_T,\sigma|_t,f|_T),\underline{\sigma|_T},\underline{f|_T}) = (\Cl(\cA,\sigma,f),\underline{\sigma},\underline{f})|_T,
\]
i.e., the construction of the canonical semi-trace is stable under base change.
\end{cor}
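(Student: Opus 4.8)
The plan is to leverage the explicit local description of the canonical semi-trace furnished by \Cref{canonical_f_local}: this should reduce the claimed equality of semi-traces to a comparison, over affine schemes in $\Sch_T$, of two evident formulas, exploiting that \Cref{affine_trace_1} always provides a trace-one section over an affine scheme.

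First I would observe that the required hypotheses are automatically inherited by $(\cA|_T,\sigma|_T,f|_T)$: the degree is unchanged, if $2 = 0 \in \cO(S)$ then $2 = 0 \in \cO(T)$, and in every case in which $\underline{\sigma}$ is orthogonal over $S$ the restriction $\underline{\sigma|_T}$ is orthogonal over $T$ by \Cref{even_invol_type} together with the fact that a symplectic involution is also orthogonal when $2 = 0$; hence the canonical Clifford algebra with semi-trace $(\Cl(\cA|_T,\sigma|_T,f|_T),\underline{\sigma|_T},\underline{f|_T})$ is indeed defined. Next I would recall that the formation of the Clifford algebra commutes with restriction to a smaller site: the tensor algebra $\cT(\underline{\cA})$, the defining two-sided ideals, the canonical mapping $c$, the canonical involution $\underline{\sigma}$, the reduced trace, and the subsheaves $\cSym$ and $\cAlt$ are all built from $\sigma$, $f$, $\Trd_{\cA}$, and $\cAlt_{(\cA,\sigma)}$ by formulas making no reference to the ambient site. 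Consequently $(\Cl(\cA,\sigma,f),\underline{\sigma})|_T = (\Cl(\cA|_T,\sigma|_T,f|_T),\underline{\sigma|_T})$ as algebras with involution, compatibly with $c$ and $\Trd$, and in particular $\cSym_{(\Cl(\cA,\sigma,f),\underline{\sigma})}|_T = \cSym_{(\Cl(\cA|_T,\sigma|_T,f|_T),\underline{\sigma|_T})}$, so that $\underline{f}|_T$ and $\underline{f|_T}$ are two $\cO|_T$-linear maps out of one common sheaf and it makes sense to compare them. (Should one prefer not to cite this compatibility, it can be checked affine-locally via \cite[4.2.0.7]{CF}.)

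A morphism of sheaves on $\Sch_T$ is determined by its restriction to the big affine fppf site $\Aff_T$, so it suffices to show that $\underline{f}|_T$ and $\underline{f|_T}$ agree on sections over each $U \in \Aff_T$. Fix such a $U$. Since $U$ is affine and carries a structure morphism to $S$, it also lies in $\Aff_S$, so \Cref{affine_trace_1} provides $a \in \cA(U)$ with $\Trd_{\cA}(a) = 1 \in \cO(U)$. Applying \Cref{canonical_f_local} to the quadratic triple $(\cA,\sigma,f)$ over $S$ gives, on sections over $U$,
\[
\underline{f}|_U = \Trd_{\Cl(\cA,\sigma,f)}(c(a)\cdot\und),
\]
while applying \Cref{canonical_f_local} to $(\cA|_T,\sigma|_T,f|_T)$ over the base $T$, using the same $a$ now regarded as an element of $(\cA|_T)(U) = \cA(U)$, gives $\underline{f|_T}|_U = \Trd_{\Cl(\cA|_T,\sigma|_T,f|_T)}(c(a)\cdot\und)$. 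By the restriction-compatibility of $c$ and $\Trd$ recorded above, these describe the same map over $U$; as $U \in \Aff_T$ was arbitrary, $\underline{f}|_T = \underline{f|_T}$, which is the asserted identity, and the algebra-with-involution parts already match.

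I expect the one genuine obstacle to be the bookkeeping in the second paragraph: carefully pinning down that $\Cl$, $c$, $\Trd$, and the subsheaves $\cSym$, $\cAlt$ all commute with restriction, so that the ``same $a$'' truly yields the ``same formula'' on each side. Once that is granted the argument is entirely formal, and in particular the map $\rho$ and the diagram \eqref{eq_rho_diagram} need not reappear — although alternatively one could rerun the whole construction under base change, invoking naturality of \eqref{eq_rho_diagram}, and reach the same conclusion.
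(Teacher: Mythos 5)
Your proposal is correct and follows essentially the same route as the paper: both arguments reduce to affine schemes over $T$, invoke \Cref{affine_trace_1} to produce a trace-one section there, and use the explicit local formula of \Cref{canonical_f_local} to see that $\underline{f}|_T$ and $\underline{f|_T}$ agree locally, hence globally. The extra bookkeeping you flag (compatibility of $\Cl$, $c$, $\Trd$ with restriction) is exactly what the paper dispenses with in one sentence, so there is no substantive difference.
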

\begin{proof}
The construction of the Clifford algebra and its canonical involution are stable under base change, so we only need to verify that $\underline{f|_T} = \underline{f}|_T$. Let $\{U_i \to T\}_{i\in I}$ be an affine cover of $T$. By \Cref{affine_trace_1}, since each $U_i$ is affine there will be sections $a_i\in \cA|_T(U_i) = \cA(U_i)$ with $\Trd_{\cA}(a_i) = 1$. Therefore by applying \Cref{canonical_f_local} we have that
\[
(\underline{f|_T})|_{U_i} = \Trd_{\Cl(\cA,\sigma,f)}(c(a_i) \und ) = \underline{f}|_{U_i} = (\underline{f}|_T)|_{U_i}.
\]
Hence, since they are equal locally on a cover we conclude that $\underline{f|_T} = \underline{f}|_T$ over $T$ as desired.
\end{proof}

The next proposition shows that, in the split case, the canonical semi-trace aligns with the quadratic form defined in \Cref{split_Clifford}.
\begin{prop}\label{canonical_Phi_correspondence}
Let $(\HH(\cV),q_{2n})$ be as in \Cref{split_Clifford} with $n\geq 4$ such that $(\Cl_0(\HH(\cV),q_{2n}),\underline{\sigma}_0)$ has an orthogonal involution. Then, the isomorphism $\Phi_0$ of \eqref{eq_Phi_not} extends to an isomorphism of algebras with semi-trace
\[
\Phi_0 \colon (\Cl_0(\HH(\cV),q_{2n}),\underline{\sigma}_0,\underline{f}) \iso (\cEnd_{\cO}(\wedge_0 \cV),\sigma_{\wedge 0},f_{\wedge 0})\times (\cEnd_{\cO}(\wedge_1 \cV),\sigma_{\wedge 1},f_{\wedge 1}),
\]
where $\underline{f}$ is the canonical semi-trace and $(\cEnd_{\cO}(\wedge_i \cV),\sigma_{\wedge i},f_{\wedge i})$ are the adjoint quadratic triples associated to the hyperbolic quadratic forms $q_{\wedge}|_{\wedge_i \cV}$.
\end{prop}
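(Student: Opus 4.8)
The plan is to reduce everything to one explicit exterior-algebra computation, since the part of the statement about the involution is already at hand. Because $n$ is even, the form $b_\wedge$ never pairs $\wedge_0\cV$ with $\wedge_1\cV$ — the basis vectors $v_I$ and $v_{I^c}$ always lie in the same homogeneous component — so by \Cref{b_wedge_properties} it restricts to a regular bilinear form on each $\wedge_i\cV$, which is symmetric under our hypotheses on $n$ and, by \Cref{q_wedge_polar}, is the polar of $q_\wedge|_{\wedge_i\cV}$. Hence the adjoint of $b_\wedge|_{\wedge_i\cV}$ is exactly the $\sigma_{\wedge i}$ of the adjoint triple, and \Cref{b_wedge_properties}\ref{b_wedge_properties_iii}, together with the fact that $\Phi$ carries $\Cl_0(\HH(\cV),q_{2n})$ onto $\cEnd_{\cO}(\wedge_0\cV)\times\cEnd_{\cO}(\wedge_1\cV)$, shows that $\Phi_0$ already identifies $\underline{\sigma}_0$ with $\sigma_{\wedge 0}\times\sigma_{\wedge 1}$. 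What remains is to match the semi-traces.

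For that, I would transport the formula of \Cref{canonical_f_local} across the two isomorphisms in play. The algebra $\cEnd_{\cO}(\HH(\cV))$ has the global section $a=\varphi_{b_{q_{2n}}}(v_1\otimes v_1^*)$ with $\Trd(a)=b_{q_{2n}}(v_1,v_1^*)=1$, so \Cref{canonical_f_local} gives $\underline{f}=\Trd(c(a)\cdot\und)$ globally. Since $\Phi$ of \Cref{Clifford_iso} is induced by $\varphi_{b_{q_{2n}}}$ we have $\Phi(v_1v_1^*)=c(a)$, and since $\Phi_0$ of \Cref{split_Clifford} sends $v_1v_1^*\mapsto\ell_{v_1}\circ d_{v_1^*}$, the canonical semi-trace becomes, on each factor of the product, the linear form $T\mapsto\Trd((\ell_{v_1}d_{v_1^*})|_{\wedge_i\cV}\cdot T)$. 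I would then check this really is a semi-trace for $\sigma_{\wedge i}$: the Clifford relation $v_1v_1^*=1-v_1^*v_1$ gives $\ell_{v_1}d_{v_1^*}+d_{v_1^*}\ell_{v_1}=\Id$, and $\tau$ fixes both $\ell_{v_1}$ and $d_{v_1^*}$ by \Cref{b_wedge_properties}\ref{b_wedge_properties_iii}, so $\ell_{v_1}d_{v_1^*}+\tau(\ell_{v_1}d_{v_1^*})=\Id$, which is precisely the condition making $\Trd(\ell_{v_1}d_{v_1^*}\cdot\und)$ a semi-trace.

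Now $f_{\wedge i}$ and this transported form are both semi-traces for the single involution $\sigma_{\wedge i}$, so by the bijection between semi-traces and quadratic forms with a fixed polar recalled in \Cref{quad_forms_and_triples} it suffices to check the two attached quadratic forms on $\wedge_i\cV$ coincide. Using $\Tr(T\circ\varphi_{b_\wedge}(w\otimes w))=b_\wedge(w,T(w))$, the form attached to the transported semi-trace sends $w\mapsto b_\wedge(w,(\ell_{v_1}d_{v_1^*})(w))$. As $(\ell_{v_1}d_{v_1^*})(v_I)$ equals $v_I$ when $1\in I$ and $0$ otherwise, writing $w=\sum_{I\subseteq[n]}\alpha_I v_I$ and feeding this into \Cref{b_wedge_formula} produces $\sum_{I\colon 1\in I^c}(-1)^{\Sigma I-|I|}\alpha_I\alpha_{I^c}$; the substitution $I\leftrightarrow I^c$ together with $(-1)^{\Sigma I-|I|}=(-1)^{\Sigma I^c-|I^c|}$ (valid under our hypotheses on $n$, as in the proof of \Cref{b_wedge_properties}\ref{b_wedge_properties_i}) rewrites this as $q_\wedge(w)$ from \eqref{eq_q_wedge}. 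Thus the quadratic forms, hence the semi-traces, agree on each factor, and $\Phi_0$ carries $\underline{f}$ onto $(f_{\wedge 0},f_{\wedge 1})$.

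The genuinely routine part is that final sign bookkeeping; the step that most needs care is the earlier one of tracking $\underline{f}$ correctly through $\Phi$ and $\Phi_0$ — being sure that the trace-$1$ element goes through $\Phi^{-1}$ to $v_1v_1^*$ and through $\Phi_0$ to $\ell_{v_1}d_{v_1^*}$ — plus the small check that the form it produces on $\cEnd_{\cO}(\wedge_i\cV)$ is a semi-trace for $\sigma_{\wedge i}$. Everything else is assembling results already in \Cref{prelim} and \Cref{A_degree_8}.
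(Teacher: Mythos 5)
Your proposal is correct and follows essentially the same route as the paper: it transports the same trace-$1$ element (your $\varphi_{b_{q_{2n}}}(v_1\otimes v_1^*)$ is the paper's $E_{2n,2n}$) through $\Phi$ and $\Phi_0$ to $\ell_{v_1}\circ d_{v_1^*}$, and then identifies the transported semi-trace with $f_{\wedge i}$ by checking the associated quadratic form equals $q_\wedge$ via the same sign computation. The only cosmetic difference is that you package the trace evaluation as $b_\wedge(w,(\ell_{v_1}d_{v_1^*})(w))$ rather than expanding $\varphi_{b_{\wedge,0}}(x\otimes x)$ directly, which changes nothing of substance.
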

\begin{proof}
The assumption that $\underline{\sigma}_0$ be orthogonal means that $n \equiv 0 \pmod{4}$ or both $n\equiv 2 \pmod{4}$ and $2=0\in \cO$. Therefore, by \Cref{even_invol_type} and \Cref{q_wedge_polar}, the isomorphism $\Phi_0$ of \eqref{eq_Phi_not} extends to an isomorphism of algebras with involution
\[
\Phi_0 \colon (\Cl_0(\HH(\cV),q_{2n}),\underline{\sigma}_0) \iso (\cEnd_{\cO}(\wedge_0 \cV),\sigma_{\wedge, 0})\times (\cEnd_{\cO}(\wedge_1 \cV),\sigma_{\wedge,1}).
\]
Therefore, we only need to show that the semi-traces also correspond via this isomorphism. For computations, we choose the trace $1$ element $E_{2n,2n} \in \Mat_{2n}(\cO)$, which has image $c(E_{2n,2n})=v_1v_1^*$ in the Clifford algebra. Under $\Phi_0$ this maps to the endomorphism $\ell_{v_1}\circ d_{v_1^*}$. For $I\subseteq [n]$, we have
\[
(\ell_{v_1}\circ d_{v_1^*})(v_I) = \begin{cases} v_I & 1\in I \\ 0 & 1 \notin I. \end{cases}
\]
The semi-trace corresponding to $\underline{f}$ via $\Phi_0$ is $\Trd_{\cEnd_{\cO}(\wedge_0 \cV)\times \cEnd_{\cO}(\wedge_1 \cV)}((\ell_{v_1}\circ d_{v_1^*})\und)$.

Consider the isomorphism
\[
\varphi_{b_{\wedge,0}}\colon \wedge_0\cV \otimes_{\cO} \wedge_0 \cV \iso \cEnd_{\cO}(\wedge_0 \cV)
\]
of \eqref{eq_bilinear_iso}. Let $x = \displaystyle\sum_{I \subseteq [n] \atop |I| \text{ even}} a_I v_I$ be a generic element of $\wedge_0 \cV$. Then,
\[
\varphi_{b_{\wedge,0}}(x\otimes x) = \sum_{I,J\subseteq [n] \atop |I|,|J| \text{ even}} a_I a_J b_\wedge(v_I,\und)\cdot v_J.
\]
The trace on $\Cl_0(\HH(\cV),q_{2n})$ is the restriction of the reduced trace on $\Cl(\HH(\cV),q_{2n})$ where, using \Cref{b_wedge_formula}, we have
\[
\Trd_{\Cl(\HH(\cV),q_{2n})}(b_\wedge(v_I,\und)v_J) = \begin{cases} (-1)^{\Sigma I -|I|} & J= I^c \\ 0 & J\neq I^c. \end{cases}
\]
Furthermore,
\[
\ell_{v_1}\circ d_{v_1^*} \circ b_\wedge(v_I,\und)v_J = \begin{cases} b_\wedge(v_I,\und)\cdot v_J & 1\in J \\ 0 & 1 \notin J. \end{cases}
\]
Assembling these facts and using that $n$ is even, we compute that
\begin{align*}
(\Trd((\ell_{v_1}\circ d_{v_1^*})\und)\circ \varphi_{b_{\wedge,0}})(x\otimes x) &= \sum_{I \subseteq [n] \atop |I| \text{ even, } 1 \in I^c} (-1)^{\Sigma I - |I|}a_I a_{I^c} \\
&= \sum_{I \subseteq [n] \atop |I| \text{ even, } 1 \in I} (-1)^{\Sigma I^c - |I^c|}a_{I^c} a_{I} \\
&= \sum_{I \subseteq [n] \atop |I| \text{ even, } 1 \in I} (-1)^{\Sigma I - |I|}a_{I}a_{I^c}
\end{align*}
where in the last equality we us the fact that $(-1)^{\Sigma I - |I|}=(-1)^{\Sigma I^c - |I^c|}$ trivially if $2=0 \in \cO$ and they are equal by the argument in the proof of \Cref{b_wedge_properties}\ref{b_wedge_properties_i} if $n\equiv 0 \mod{4}$. However, this final expression is exactly $q_\wedge(x) = q_{\wedge 0}(x)$, and therefore by \cite[4.18]{GNR} the restriction of the canonical semi-trace to $\cEnd_{\cO}(\wedge_0 \cV)$ agrees with the semi-trace $f_{\wedge 0}$ from the quadratic triple adjoint to $q_{\wedge 0}$. The calculations for $\wedge_1 \cV$ and $q_{\wedge 1}$ are analogous and we leave them to the reader.
\end{proof}

This definition of the canonical semi-trace is also compatible with the functoriality of the Clifford algebra construction for isomorphisms of the underlying quadratic triples. Over fields this was shown in \cite[3.5]{DQ21}.
\begin{lem}\label{canonical_functorial}
Let $(\cA,\sigma,f)$ and $(\cA',\sigma',f')$ be two quadratic triples with canonical maps $c$ and $c'$ into their respective Clifford algebras. Let $\varphi \colon (\cA,\sigma,f)\iso (\cA',\sigma',f')$ be an isomorphism of quadratic triples. Then the morphism of \eqref{eq_Clifford_functorial} is also an isomorphism of algebras with semi-trace
\[
\Cl(\varphi) \colon (\Cl(\cA,\sigma,f),\underline{\sigma},\underline{f}) \to (\Cl(\cA',\sigma',f'),\underline{\sigma'},\underline{f'}).
\]
\end{lem}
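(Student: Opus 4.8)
The plan is to reduce the claim to the already-established local description of the canonical semi-trace in \Cref{canonical_f_local}, using that the property ``$\Cl(\varphi)$ intertwines $\underline{f}$ and $\underline{f'}$'' is local on $S$ and that, locally on affine charts, $\underline{f}$ is computed via a trace-$1$ element by \Cref{affine_trace_1}. More precisely, I would first recall that $\Cl(\varphi)$ is already known to be an isomorphism of algebras with involution from \eqref{eq_Clifford_functorial}, so the only thing to verify is the compatibility with semi-traces, namely $\underline{f'}\circ \Cl(\varphi)|_{\cSym} = \underline{f}$ on $\cSym_{(\Cl(\cA,\sigma,f),\underline{\sigma})}$. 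Since both sides are $\cO$-module morphisms between sheaves, it suffices to check this equality after restricting to an fppf cover of $S$, and in fact it suffices to check it over an affine cover $\{U_i \to S\}_{i\in I}$.

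Next I would carry out the verification over a fixed affine $U = U_i$. By \Cref{affine_trace_1} there is a section $a \in \cA(U)$ with $\Trd_{\cA}(a) = 1 \in \cO(U)$; since $\varphi$ is an isomorphism of quadratic triples, $a' := \varphi(a) \in \cA'(U)$ satisfies $\Trd_{\cA'}(a') = \Trd_{\cA}(a) = 1$ because $\varphi$ commutes with the reduced trace (reduced trace is intrinsic, or: $\varphi$ is locally an algebra isomorphism hence preserves the trace of matrices). By \Cref{canonical_f_local} we then have $\underline{f}|_U = \Trd_{\Cl(\cA,\sigma,f)}(c(a)\cdot\und)$ and $\underline{f'}|_U = \Trd_{\Cl(\cA',\sigma',f')}(c'(a')\cdot\und)$. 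The defining property of $\Cl(\varphi)$ is that $\Cl(\varphi)(c(b)) = c'(\varphi(b))$ for all $b \in \cA$; in particular $\Cl(\varphi)(c(a)) = c'(a')$. Moreover, $\Cl(\varphi)$ being an algebra isomorphism preserves reduced traces of the (locally Cartesian product of Azumaya) Clifford algebras, so for any $s \in \cSym_{(\Cl(\cA,\sigma,f),\underline{\sigma})}(U')$ with $U' \to U$ we get
\[
(\underline{f'}\circ \Cl(\varphi))(s) = \Trd_{\Cl(\cA',\sigma',f')}\!\big(c'(a')\cdot \Cl(\varphi)(s)\big) = \Trd_{\Cl(\cA',\sigma',f')}\!\big(\Cl(\varphi)(c(a)\cdot s)\big) = \Trd_{\Cl(\cA,\sigma,f)}(c(a)\cdot s) = \underline{f}(s),
\]
using multiplicativity of $\Cl(\varphi)$ in the second step and trace-preservation in the third. (One should also note $\Cl(\varphi)$ carries $\cSym_{(\Cl(\cA,\sigma,f),\underline{\sigma})}$ onto $\cSym_{(\Cl(\cA',\sigma',f'),\underline{\sigma'})}$ since it intertwines $\underline{\sigma}$ and $\underline{\sigma'}$, so the composite is well-defined.) This shows $\underline{f'}\circ \Cl(\varphi)|_U = \underline{f}|_U$ for each $i$.

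Finally, since the two $\cO$-module maps $\underline{f'}\circ \Cl(\varphi)$ and $\underline{f}$ agree on the affine cover $\{U_i \to S\}$, they agree globally as morphisms of sheaves on $\Sch_S$, which gives the desired compatibility and completes the proof that $\Cl(\varphi)$ is an isomorphism of algebras with semi-trace. I expect the only genuinely delicate point to be the bookkeeping that $\varphi$ and $\Cl(\varphi)$ preserve the relevant reduced traces — for $\varphi$ this is because it is (locally) an isomorphism of Azumaya algebras, and for $\Cl(\varphi)$ one invokes \Cref{even_invol_type} to know $\Cl(\cA,\sigma,f)$ is locally a Cartesian product of Azumaya algebras so that $\Trd_{\Cl(-)}$ makes sense and is preserved by algebra isomorphisms — but this is routine rather than substantive. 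Everything else is a direct chase through the definitions already set up in \Cref{canonical_f_local} and \Cref{affine_trace_1}.
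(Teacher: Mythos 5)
Your proposal is correct and follows essentially the same route as the paper: reduce to the semi-trace compatibility, localize to a cover where a trace-$1$ section exists (the paper uses such a section $a'\in\cA'(T)$ and pulls it back via $\varphi^{-1}$, you push forward $a\in\cA(U)$ from an affine chart via \Cref{affine_trace_1}), and then conclude from \Cref{canonical_f_local} together with the fact that $\Cl(\varphi)$ is an algebra isomorphism preserving reduced traces. The differences are purely cosmetic, so no further comment is needed.
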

\begin{proof}
Since $\Cl(\varphi)$ is already an isomorphism of algebras with involution, it only remains to check that $\underline{f'}\circ \Cl(\varphi) = \underline{f}$. We do so locally. Let $T\in \Sch_S$ be such that there exists $a'\in \cA'(T)$ with $\Trd_{\cA'}(a')=1$. By \Cref{canonical_f_local}, such an element describes $\underline{f'}|_T$. Since $\varphi$ is an isomorphism, $\Trd_{\cA}(\varphi^{-1}(a'))=1$ and since
\begin{align*}
\Trd_{\Cl(\cA',\sigma',f')}(c(a')\cdot \Cl(\varphi)(\und)) &= \Trd_{\Cl(\cA,\sigma,f)}(\Cl(\varphi)^{-1}(c(a'))\cdot \und)\\
&= \Trd_{\Cl(\cA,\sigma,f)}(c(\varphi^{-1}(a'))\cdot \und)
\end{align*}
this shows that $\underline{f'}|_T \circ \Cl(\varphi)|_T = \underline{f}_T$. Since there exists an open cover over which this holds, we conclude that $\underline{f'}\circ \Cl(\varphi) = \underline{f}$ globally as desired.
\end{proof}

Since the canonical semi-trace is stable under base change, the above result extends to a homomorphism of group sheaves.
\begin{lem}\label{canonical_group_sheaf_morphism}
Let $(\cA,\sigma,f)$ be a quadratic triple such that we have the canonical Clifford algebra with semi-trace $(\Cl(\cA,\sigma,f),\underline{\sigma},\underline{f})$. The morphism $\bC' \colon \PGO_{(\cA,\sigma,f)} \to \bAut(\Cl(\cA,\sigma,f),\underline{\sigma})$ factors through $\bAut(\Cl(\cA,\sigma,f),\underline{\sigma},\underline{f})$. In particular we have a commutative diagram
\begin{equation}\label{eq_action_on_canonical}
\begin{tikzcd}
\bO_{(\cA,\sigma,f)} \arrow{d}{\pi_{\bO}} \arrow{dr}{\bC} &  \\
\PGO_{(\cA,\sigma,f)} \arrow{r}{\bC'} & \cAut(\Cl(\cA,\sigma,f),\underline{\sigma},\underline{f}).
\end{tikzcd}
\end{equation}
\end{lem}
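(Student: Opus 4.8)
The plan is to deduce this entirely from \Cref{canonical_functorial} together with the base change compatibility of \Cref{canonical_f_stable_base_change}. Recall that a section of $\PGO_{(\cA,\sigma,f)}$ over a scheme $T\in\Sch_S$ is precisely an automorphism $\psi$ of the quadratic triple $(\cA|_T,\sigma|_T,f|_T)$, and that by the functoriality \eqref{eq_Clifford_functorial} its image $\bC'(\psi)$ is the induced automorphism $\Cl(\psi)$ of $(\Cl(\cA,\sigma,f)|_T,\underline{\sigma}|_T)$. So the content to be checked is that $\Cl(\psi)$ additionally preserves the canonical semi-trace.

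First I would invoke \Cref{canonical_f_stable_base_change}, applied to the scheme $T$, which identifies $(\Cl(\cA,\sigma,f),\underline{\sigma},\underline{f})|_T$ with the canonical Clifford algebra with semi-trace of the restricted quadratic triple $(\cA|_T,\sigma|_T,f|_T)$; in particular $\underline{f}|_T=\underline{f|_T}$. Next, applying \Cref{canonical_functorial} over the base $T$ to the quadratic-triple isomorphism $\psi\colon(\cA|_T,\sigma|_T,f|_T)\iso(\cA|_T,\sigma|_T,f|_T)$ shows that $\Cl(\psi)$ is an isomorphism of algebras with semi-trace, i.e.\ $\underline{f|_T}\circ\Cl(\psi)=\underline{f|_T}$. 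Combining these two facts gives $\underline{f}|_T\circ\bC'(\psi)=\underline{f}|_T$, which says exactly that $\bC'(\psi)\in\bAut(\Cl(\cA,\sigma,f),\underline{\sigma},\underline{f})(T)$. Since membership in $\bAut(\Cl(\cA,\sigma,f),\underline{\sigma},\underline{f})$ is a condition that can be tested over each $T$, and $\bC'$ is natural in $T$, this shows that $\bC'$ factors through the subsheaf $\bAut(\Cl(\cA,\sigma,f),\underline{\sigma},\underline{f})\hookrightarrow\bAut(\Cl(\cA,\sigma,f),\underline{\sigma})$. The commutativity of \eqref{eq_action_on_canonical} is then just the commutativity of the already-established triangle \eqref{eq_twisted_Clifford_actions} with its codomain corestricted along the monomorphism $\bAut(\Cl(\cA,\sigma,f),\underline{\sigma},\underline{f})\hookrightarrow\bAut(\Cl(\cA,\sigma,f),\underline{\sigma})$, so nothing further needs to be verified.

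There is essentially no remaining obstacle: the substantive work — that the canonical semi-trace transforms correctly under isomorphisms of quadratic triples, and that its formation commutes with base change — has already been done in \Cref{canonical_functorial} and \Cref{canonical_f_stable_base_change}. The only point that needs a moment's care is that \Cref{canonical_functorial} must be applied over the varying base $T$ rather than over $S$, which is legitimate precisely because \Cref{canonical_f_stable_base_change} guarantees that the restricted triple again carries its own canonical semi-trace compatibly; once that identification $\underline{f}|_T=\underline{f|_T}$ is in hand, the argument is formal.
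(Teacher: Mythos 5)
Your proposal is correct and follows essentially the same route as the paper: both proofs combine \Cref{canonical_functorial} (applied over the varying base via its global-sections statement) with the base change compatibility of \Cref{canonical_f_stable_base_change} to conclude that $\bC'(T)$ lands in $\bAut(\Cl(\cA,\sigma,f),\underline{\sigma},\underline{f})(T)$ for every $T$. The remark that the commutativity of \eqref{eq_action_on_canonical} is inherited from \eqref{eq_twisted_Clifford_actions} by corestriction is a fine, if implicit in the paper, closing observation.
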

\begin{proof}
\Cref{canonical_functorial} above shows that on global sections the map $\bC'(S)$ factors through $\bAut(\Cl(\cA,\sigma,f),\underline{\sigma},\underline{f})(S)$. Combining this with \Cref{canonical_f_stable_base_change} we see that for any $T\in \Sch_S$, the homomorphism $\bC'(T)$ factors through
\begin{align*}
\Aut(\Cl(\cA|_T,\sigma|_T,f|_T),\underline{\sigma|_T},\underline{f|_T}) &= \Aut((\Cl(\cA,\sigma,f),\underline{\sigma},\underline{f})|_T) \\
&= \bAut(\Cl(\cA,\sigma,f),\underline{\sigma},\underline{f})(T).
\end{align*}
This justifies the claim.
\end{proof}

We may now use the map $\bC' \colon \PGO_{2n} \to \cAut(\Cl_0(\HH(\cV),q_{2n}),\underline{\sigma_{2n}}_0,\underline{f_{2n}})$ in the split case to twist the Clifford algebra by $\PGO_{2n}$--torsors.
\begin{prop}\label{Clifford_twisting}
Let $(\HH(\cV),q_{2n})$ be as in \Cref{split_Clifford} with $n\geq 4$ such that we have the canonical Clifford algebra with semi-trace $\cC_0 = (\Cl_0(\HH(\cV),q_{2n}),\underline{\sigma_{2n}}_0,\underline{f_{2n}})$. Give this Clifford algebra a left action of $\PGO_{2n}$ via the map $\bC'$ of \eqref{eq_action_on_canonical}. For a quadratic triple $(\cA,\sigma,f)$ of degree $2n$, consider the $\PGO_{2n}$--torsor
\[
\cP=\cIsom\big((\cEnd_{\cO}(\HH(\cV)),\sigma_{2n},f_{2n}),(\cA,\sigma,f)\big).
\]
Then, we have a natural isomorphism
\[
\cP \wedge^{\PGO_{2n}} \cC_0 \iso (\Cl(\cA,\sigma,f),\underline{\sigma},\underline{f}).
\]
\end{prop}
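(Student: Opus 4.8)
The strategy is to recognise both sides as twisted forms of $\cC_0$ by the torsor $\cP$ and to check that the twisting data coincide. Throughout, via the identification of \Cref{Clifford_iso} and \Cref{canonical_group_sheaf_morphism}, I regard $\cC_0$ as the canonical Clifford algebra with semi-trace of the split quadratic triple $(\cEnd_{\cO}(\HH(\cV)),\sigma_{2n},f_{2n})$, and a section $p\in\cP(T)$ as an isomorphism of quadratic triples $p\colon(\cEnd_{\cO}(\HH(\cV)),\sigma_{2n},f_{2n})|_T\iso(\cA,\sigma,f)|_T$. By the torsor axioms there is an fppf cover $\{T_i\to S\}$ with a section $p_i\in\cP(T_i)$ for each $i$. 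Note first that $\cD:=(\Cl(\cA,\sigma,f),\underline{\sigma},\underline{f})$ is defined: the conditions $n\geq 4$ and ``$\underline{\sigma}$ orthogonal'' depend only on $n\bmod 4$ and on whether $2=0\in\cO$ (by \Cref{even_invol_type}), so they are inherited from the hypotheses placed on the split case.

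First I would show that $\cD$ is a twisted form of $\cC_0$. Applying the functoriality \eqref{eq_Clifford_functorial} to each $p_i$ gives isomorphisms of algebras with involution $\Cl(p_i)\colon\cC_0|_{T_i}\iso\cD|_{T_i}$, and by \Cref{canonical_functorial} together with the base-change stability of \Cref{canonical_f_stable_base_change} these also carry $\underline{f_{2n}}$ to $\underline{f}$; so $\cC_0$ and $\cD$ are locally isomorphic as algebras with semi-trace. Consequently $\cIsom(\cC_0,\cD)$ is an $\bAut(\cC_0)$-torsor and, by the twisting formalism recalled in \Cref{cohomology_twisting}, there is a canonical isomorphism $\cIsom(\cC_0,\cD)\wedge^{\bAut(\cC_0)}\cC_0\iso\cD$. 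Next I would produce a morphism of sheaves $\Theta\colon\cP\to\cIsom(\cC_0,\cD)$, $\Theta(p)=\Cl(p)$, which is well posed exactly because \Cref{canonical_functorial} and \Cref{canonical_f_stable_base_change} ensure that $\Cl(p)$ is an isomorphism of algebras with semi-trace over each base $T$. The functoriality identity $\Cl(p\circ g)=\Cl(p)\circ\Cl(g)$ together with $\bC'(g)=\Cl(g)$ (the discussion around \eqref{eq_twisted_Clifford_actions}) shows that $\Theta(p\cdot g)=\Theta(p)\cdot\bC'(g)$, i.e. $\Theta$ is equivariant along $\bC'\colon\PGO_{2n}\to\bAut(\cC_0)$. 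Since the left $\PGO_{2n}$-action on $\cC_0$ is, by definition, $\bC'$ followed by the tautological $\bAut(\cC_0)$-action, $\Theta$ induces a morphism of contracted products
\[
\cP\wedge^{\PGO_{2n}}\cC_0\longrightarrow\cIsom(\cC_0,\cD)\wedge^{\bAut(\cC_0)}\cC_0\iso\cD,\qquad [p,x]\longmapsto\Cl(p)(x),
\]
the last arrow being the isomorphism found above.

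Finally I would check that this composite is an isomorphism, which may be done over the cover $\{T_i\}$. Over $T_i$ the section $p_i$ trivialises $\cP$, so every local section of $\cP\wedge^{\PGO_{2n}}\cC_0$ is uniquely of the form $[p_i,x]$ and is sent to $\Cl(p_i)(x)$; as $\Cl(p_i)$ is an isomorphism, so is the composite over $T_i$, hence globally. The rule $[p,x]\mapsto\Cl(p)(x)$ is well defined on the presheaf $\cP\times\cC_0$, and thus on its sheafification, by the identity $\Cl(p\cdot g)(x)=\Cl(p)(\bC'(g)(x))$, so the resulting isomorphism is independent of the cover and of the chosen sections, and naturality follows. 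The one point to handle carefully throughout is to keep the two actions --- that of $\PGO_{2n}$ and that of $\bAut(\cC_0)$, intertwined by $\bC'$ --- and the sheafification built into $\wedge$ properly aligned; I expect no genuine obstacle beyond this bookkeeping, since the substantive input (functoriality and base-change stability of the canonical semi-trace) has already been isolated in \Cref{canonical_functorial} and \Cref{canonical_f_stable_base_change}.
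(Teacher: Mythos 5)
Your proposal is correct and follows essentially the same route as the paper: the underlying map is exactly $[p,x]\mapsto\Cl(p)(x)$, checked to be well defined via $\bC'(g)=\Cl(g)$ and functoriality, and shown to be an isomorphism locally using a trivializing section and the simply transitive $\PGO_{2n}$-action. Your extra factorization through $\cIsom(\cC_0,\cD)\wedge^{\bAut(\cC_0)}\cC_0$ is harmless repackaging of the same argument, with the same substantive inputs (\Cref{canonical_functorial} and \Cref{canonical_f_stable_base_change}).
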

\begin{proof}
To show that we have an isomorphism as claimed, we define a map from the presheaf $(\cP \times \cC_0)/\sim$ of \Cref{cohomology_twisting} into $(\Cl(\cA,\sigma,f),\underline{\sigma},\underline{f})$ which is bijective locally and therefore induces an isomorphism of sheaves as claimed.

The natural map we consider is
\begin{align*}
(\cP \times \cC_0)/\sim &\to (\Cl(\cA,\sigma,f),\underline{\sigma},\underline{f}) \\
(\varphi,x) &\mapsto \Cl(\varphi)(x)
\end{align*}
We check that this is well defined. Let $\psi \in \PGO_{2n}$, then
\[
(\varphi\cdot \psi,x)= (\varphi\circ \psi,x) \mapsto \Cl(\varphi \circ \psi)(x)
\]
while
\begin{align*}
(\varphi,\psi\cdot x) &= (\varphi,\bC'(\psi)(x)) = (\varphi,\Cl(\psi)(x)) \\
&\mapsto (\Cl(\varphi)\circ \Cl(\psi))(x) = \Cl(\varphi \circ \psi)(x)
\end{align*}
which agrees with above and so we have a well-defined map. To see it is bijective locally, let $T\in \Sch_S$ such that $\cP(T)\neq \O$. Fix some $\varphi_0 \in \cP(T)$ and then because the action of $\PGO_{2n}(T)$ is simply transitive, all elements of $((\cP \times \cC_0)/\sim)(T)$ may be written uniquely as $(\varphi_0,x)$ for some $x\in \cC_0(T)$. But then it is clear that we have a bijection since it is simply the map
\[
(\varphi_0,x) \mapsto \Cl(\varphi_0)(x)
\]
where $\Cl(\varphi_0)$ is an isomorphism. This finishes the proof.
\end{proof}

\subsection{When $\cA$ is of Degree $4$}\label{A_degree_4}
If $(\cA,\sigma,f)$ is a quadratic triple of degree $4$, i.e., degree $2n$ when $n=2$, then \Cref{even_invol_type} says that the canonical involution $\underline{\sigma}$ on $\Cl(\cA,\sigma,f)$ is symplectic. If in addition we assume that $2=0 \in \cO$, then $(\Cl(\cA,\sigma,f),\underline{\sigma})$ will also be an algebra with orthogonal involution and will be eligible to be equipped with a semi-trace. Therefore, we assume $2=0\in \cO$ throughout this section. However, we will show that there does not exist a canonical choice of semi-trace on $(\Cl(\cA,\sigma,f),\underline{\sigma})$, i.e., one for which the canonical morphism $\PGO_{(\cA,\sigma,f)} \to \bAut(\Cl(\cA,\sigma,f),\underline{\sigma})$ factors through the automorphisms of the Clifford algebra with semi-trace. Intuitively, this happens for the same reason \Cref{sl_into_Alt} fails to hold when $n=2$, as pointed out in \Cref{degree_4_trace_remark}, namely that $(\Cl(\HH(\cV),q_4),\underline{\sigma_4})$ does not have enough alternating elements. These are computed in the following lemma.

\begin{lem}\label{degree_4_Alt}
Let $n=2$ and consider the split hyperbolic quadratic form $(\HH(\cV),q_4)$ of \Cref{split_Clifford}. Then,
\[
\cAlt_{(\Cl_0(\HH(\cV),q_4),\underline{\sigma_4}_0)} = \{a_0 + a_1(v_1v_1^* + v_2v_2^*) \mid a_0,a_1 \in \cO\}.
\]
\end{lem}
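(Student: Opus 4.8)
The plan is to transport the problem across the isomorphism $\Phi_0$ of \eqref{eq_Phi_not}, which identifies $\Cl_0(\HH(\cV),q_4)$ with $\cEnd_{\cO}(\wedge_0\cV)\times\cEnd_{\cO}(\wedge_1\cV)$. Fixing the ordered bases $\{1,v_1\wedge v_2\}$ of $\wedge_0\cV$ and $\{v_1,v_2\}$ of $\wedge_1\cV$ as in \Cref{degree_4_trace_remark}, each factor becomes $\Mat_2(\cO)$, and by \Cref{b_wedge_properties}\ref{b_wedge_properties_iii} — as recorded in \Cref{degree_4_trace_remark}, using the hypothesis $2=0$ — the canonical involution $\underline{\sigma}_0$ acts on each factor by $\begin{bmatrix} a & b\\ c & d\end{bmatrix}\mapsto\begin{bmatrix} d & b\\ c & a\end{bmatrix}$.

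First I would compute $\cAlt$ in the target. Applying $\Id-\underline{\sigma}_0$ to $\begin{bmatrix} a & b\\ c & d\end{bmatrix}$ in one factor yields $\begin{bmatrix} a-d & 0\\ 0 & a-d\end{bmatrix}$ (using $d-a=a-d$ since $2=0$), so the image of $\Id-\underline{\sigma}_0$ on each factor is precisely $\cO\cdot I_2$, the $\cO$-span of the identity matrix. Hence $\Phi_0$ carries $\cAlt_{(\Cl_0(\HH(\cV),q_4),\underline{\sigma}_0)}$ onto $\cO\cdot I_{\wedge_0\cV}\times\cO\cdot I_{\wedge_1\cV}$; and since on each component this map factors through the split surjection $M\mapsto a-d$ followed by the split injection $t\mapsto tI_2$, its image presheaf is already a sheaf and no sheafification subtlety intervenes.

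Next I would identify the preimage explicitly. One has $\Phi_0(1)=(I_{\wedge_0\cV},I_{\wedge_1\cV})$, and to handle $v_1v_1^*+v_2v_2^*$ I would evaluate $\ell_{v_1}\circ d_{v_1^*}+\ell_{v_2}\circ d_{v_2^*}$ on the four basis vectors $1,v_1,v_2,v_1\wedge v_2$ of $\wedge\cV$. As in the proof of \Cref{canonical_Phi_correspondence}, $\ell_{v_i}\circ d_{v_i^*}$ is the identity on those $v_I$ with $i\in I$ and is zero otherwise; therefore the sum annihilates $1$, fixes $v_1$ and $v_2$, and sends $v_1\wedge v_2\mapsto 2\,v_1\wedge v_2=0$. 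Thus $\Phi_0(v_1v_1^*+v_2v_2^*)=(0,I_{\wedge_1\cV})$, so $a_0+a_1(v_1v_1^*+v_2v_2^*)\mapsto\big(a_0 I_{\wedge_0\cV},(a_0+a_1)I_{\wedge_1\cV}\big)$. As $(a_0,a_1)$ runs over $\cO^2$ the pair $(a_0,a_0+a_1)$ runs over all of $\cO^2$, so $\Phi_0$ maps the right-hand side of the asserted identity bijectively onto $\cO\cdot I_{\wedge_0\cV}\times\cO\cdot I_{\wedge_1\cV}=\Phi_0\big(\cAlt_{(\Cl_0(\HH(\cV),q_4),\underline{\sigma}_0)}\big)$, which gives the claim.

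Everything here reduces to bookkeeping; the two points that need care are reading off the explicit matrix form of $\underline{\sigma}_0$ in the chosen bases — which is exactly \Cref{degree_4_trace_remark} — and the characteristic-two cancellation $2\,v_1\wedge v_2=0$, which is what forces $v_1v_1^*+v_2v_2^*$ to act as $0$ on $\wedge_0\cV$; absent the hypothesis $2=0$ this element would fail to be alternating at all, mirroring the failure of \Cref{sl_into_Alt} in degree $4$.
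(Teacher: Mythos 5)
Your proof is correct, but it takes a genuinely different route from the paper. The paper never leaves the Clifford algebra: it writes a generic element of $\Cl_0(\HH(\cV),q_4)$ in the $8$-element monomial basis, applies the relations $\underline{\sigma}(v_iv_i^*)=1+v_iv_i^*$ and $\underline{\sigma}(v_1v_2v_2^*v_1^*)=1+v_1v_1^*+v_2v_2^*+v_1v_2v_2^*v_1^*$ to compute $x+\underline{\sigma}(x)=(a_3+a_4+a_7)+a_7(v_1v_1^*+v_2v_2^*)$ directly, and then disposes of the sheafification issue by noting that in a free module a section that is locally of this form is globally of this form. You instead transport the whole problem through $\Phi_0$ into $\Mat_2(\cO)\times\Mat_2(\cO)$, read off the involution from \Cref{degree_4_trace_remark}, identify the image of $\Id-\underline{\sigma}_0$ as the scalar matrices $\cO\cdot I_2$ in each factor (using $2=0$), and then compute $\Phi_0(v_1v_1^*+v_2v_2^*)=(0,I_{\wedge_1\cV})$ by evaluating $\ell_{v_1}\circ d_{v_1^*}+\ell_{v_2}\circ d_{v_2^*}$ on the basis of $\wedge\cV$, so that $a_0+a_1(v_1v_1^*+v_2v_2^*)\mapsto(a_0I,(a_0+a_1)I)$ sweeps out exactly $\cO\cdot I\times\cO\cdot I$. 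Both arguments handle the image-sheaf subtlety adequately (your observation that the presheaf image is already the subsheaf $\cO\cdot I\times\cO\cdot I$ plays the role of the paper's free-module coefficient argument; splitness of the maps is not even needed, presheaf surjectivity onto that subsheaf suffices). What your route buys is structural transparency — it makes visible at a glance that $\cAlt$ is free of rank $2$ and consists of the "scalars in each factor," which is also the picture underlying \Cref{no_stable_l} — at the cost of relying on \Cref{b_wedge_properties}\ref{b_wedge_properties_iii} and the matrix form of the involution; the paper's computation is more self-contained within the Clifford presentation and is the same style of calculation it reuses immediately afterwards in \Cref{no_stable_l}.
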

\begin{proof}
This follows directly from computation. Since $\HH(\cV)$ has basis $\{v_1,v_2,v_2^*,v_1^*\}$, a generic element of $\Cl_0(\HH(\cV),q_4)$ is of the form
\[
x = a_0 + a_1v_1v_2 + a_2v_1v_2^* + a_3 v_1v_1^* + a_4 v_2v_2^* + a_5v_2v_1^* + a_6v_2^*v_1^* + a_7v_1v_2v_2^*v_1^*.
\]
Since we have assumed $2=0 \in \cO$, most elements of $\Cl(\HH(\cV),q_4)$ will commute. For example, $\underline{\sigma}(v_1v_2) = v_2v_1 = -v_1v_2 = v_1v_2$. We will still have some non-trivial commutation relations, in particular $\underline{\sigma}(v_iv_i^*) = v_i^*v_i = 1+ v_iv_i^*$, and therefore one can compute that
\[
\underline{\sigma}(v_1v_2v_2^*v_1^*) = v_1^*v_2^*v_2v_1 = 1+v_1v_1^* + v_2v_2^* + v_1v_2v_2^*v_1^*.
\]
Using these relations, we obtain
\[
x-\underline{\sigma}(x) = x+\underline{\sigma}(x) = (a_3+a_4+a_7)+a_7(v_1v_1^* + v_2v_2^*).
\]
Finally, a section $a\in \cAlt_{(\Cl_0(\HH(\cV),q_4),\underline{\sigma_4}_0)}$ must be locally of the form above. Because $\Cl_0(\HH(\cV),q_4)$ is a free $\cO$--module, the restriction maps will only act on the coefficients and therefore $a$ itself is of the form above also. This justifies the claim.
\end{proof}

This relatively small submodule of alternating elements leads to the next lemma.
\begin{lem}\label{no_stable_l}
Let $n=2$ and consider the split hyperbolic quadratic form $(\HH(\cV),q_4)$ of \Cref{split_Clifford}. Assume that there exists $t\in \cO(S)$ with $t^2\neq t$. Then, for any $\ell \in \Cl_0(\HH(\cV),q_4)(S)$ with $\ell+\underline{\sigma}(\ell) = 1$, the class
\[
[\ell] \in (\Cl_0(\HH(\cV),q_4)/\cAlt_{(\Cl_0(\HH(\cV),q_4),\underline{\sigma_4}_0)})(S)
\]
is not stabilized by $\PGO_4(S)$.
\end{lem}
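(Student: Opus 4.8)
The plan is to prove the contrapositive: if the class $[\ell]$ were fixed by all of $\PGO_4(S)$, then $s^2 = s$ would hold for \emph{every} $s \in \cO(S)$, contradicting the existence of $t$ with $t^2 \neq t$. Since the hypothesis $t^2\neq t$ provides no useful units, the obvious diagonal torus is unavailable, so instead I will use the whole one-parameter family of ``Eichler'' transformations $g_s \in \bO_4(S)$, indexed by $s \in \cO(S)$, that fix $v_1$ and $v_2$ and send $v_1^* \mapsto v_1^* + s v_2$ and $v_2^* \mapsto v_2^* + s v_1$. A one-line check shows each $g_s$ preserves $q_4$ --- the only cross term is $2s\beta_1\beta_2$, which vanishes because $2 = 0 \in \cO$ --- so each $\psi_s := \pi_{\bO}(g_s)$ lies in $\PGO_4(S)$.

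First I would set up the bookkeeping. Write $\cAlt_0$ for $\cAlt_{(\Cl_0(\HH(\cV),q_4),\underline{\sigma_4}_0)}$. By \Cref{degree_4_Alt}, $\Cl_0(\HH(\cV),q_4)$ is $\cO$-free on the eight elements $1$, $v_1v_2$, $v_1v_2^*$, $v_1v_1^*$, $v_2v_2^*$, $v_2v_1^*$, $v_2^*v_1^*$, $v_1v_2v_2^*v_1^*$, and $\cAlt_0$ is the free rank-$2$ submodule spanned by $1$ and $v_1v_1^* + v_2v_2^*$; hence the quotient is again free, $(\Cl_0/\cAlt_0)(S) = \Cl_0(S)/\cAlt_0(S)$, and since $\bC'(\psi)$ commutes with $\underline{\sigma_4}_0$ it preserves $\cAlt_0$ and so acts on the quotient, with $\psi_s \cdot [\ell] = [\bC'(\psi_s)(\ell)] = [\bC(g_s)(\ell)]$ by \Cref{action_on_Clifford}. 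Next I would compute $\bC(g_s)$ on the eight basis elements using the Clifford relations of \Cref{split_Clifford}, in particular $v_i^* v_i = 1 + v_i v_i^*$ (valid since $2=0$), which is the source of the constant and $v_1v_1^*+v_2v_2^*$ terms below: one finds that $\bC(g_s)$ fixes $1$, $v_1v_2$, $v_1v_2^*$, $v_2v_1^*$; sends each of $v_1v_1^*$, $v_2v_2^*$, $v_1v_2v_2^*v_1^*$ to itself plus $s\,v_1v_2$; and sends $v_2^*v_1^*$ to $v_2^*v_1^* + s + s(v_1v_1^*+v_2v_2^*) + s^2 v_1v_2$.

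Then, writing $\ell = a_0 + a_1 v_1v_2 + a_2 v_1v_2^* + a_3 v_1v_1^* + a_4 v_2v_2^* + a_5 v_2v_1^* + a_6 v_2^*v_1^* + a_7 v_1v_2v_2^*v_1^*$, the hypothesis $\ell + \underline{\sigma}(\ell) = 1$ forces $a_3 + a_4 + a_7 = 1$, since it is the constant coefficient in the identity $x + \underline{\sigma}(x) = (a_3+a_4+a_7) + a_7(v_1v_1^*+v_2v_2^*)$ established in the proof of \Cref{degree_4_Alt}. Substituting the basis computations then gives
\[
(\bC(g_s) - \Id)(\ell) = a_6 s + a_6 s(v_1v_1^* + v_2v_2^*) + (s + a_6 s^2)\,v_1v_2,
\]
which lies in $\cAlt_0(S)$ exactly when $s + a_6 s^2 = 0$. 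If $[\ell]$ were stabilized by $\PGO_4(S)$ it would be fixed by every $\psi_s$, forcing $s + a_6 s^2 = 0$ for all $s \in \cO(S)$; taking $s = 1$ yields $a_6 = 1$, and then $s^2 = s$ for all $s \in \cO(S)$, contradicting $t^2 \neq t$. The only nontrivial step is the eight-term Clifford computation in the second paragraph; everything else is linear bookkeeping, and the hypothesis $t^2\neq t$ is used only in the final line.
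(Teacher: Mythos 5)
Your proof is correct and follows essentially the same strategy as the paper: an explicit one-parameter unipotent family in $\bO_4(S)$, the computation of its action on the Clifford algebra via \Cref{action_on_Clifford}, the criterion from \Cref{degree_4_Alt} reducing stability of $[\ell]$ to the vanishing of a single coefficient $s + a_\bullet s^2$, and the specialization $s=1$ to force $s^2=s$ for all $s$, contradicting $t^2\neq t$. The only difference is the choice of family (your $g_s$ fixes $v_1,v_2$ and moves $v_1^*,v_2^*$, making the obstruction coefficient $a_6$, whereas the paper's $B_t$ fixes $v_1,v_2^*$ and produces the coefficient $a_5$), which is immaterial.
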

\begin{proof}
Towards a contradiction, let $\ell \in \Cl_0(\HH(\cV),q_4)(S)$ with $\ell+\underline{\sigma}(\ell)=1$ and assume that $\ell-\Cl(\varphi)(\ell)$ is alternating for all $\varphi \in \PGO_4(S)$. Using the calculation of $x+\underline{\sigma}(x)$ from the proof of \Cref{degree_4_Alt} above, we see that if
\[
1 = \ell+\underline{\sigma}(\ell) = (a_3+a_4+a_7)+a_7(v_1v_1^* + v_2v_2^*)
\]
then $a_7 = 0$ and $a_3+a_4 = 1$. Hence,
\[
\ell = a_0 + a_1v_1v_2 + a_2v_1v_2^* + a_3 v_1v_1^* + (1+a_3)v_2v_2^* + a_5v_2v_1^* + a_6v_2^*v_1^*.
\]
Identify $\cEnd_{\cO}(\HH(\cV)) \cong \Mat_4(\cO)$ using the ordered basis $\{v_1,v_2,v_2^*,v_1^*\}$ of $\HH(\cV)$ and consider the inner automorphism $\varphi = \Inn(B)$ with
\[
B = \begin{bmatrix} 1 & t & & \\ & 1 & & \\ & & 1 & t \\ & & & 1 \end{bmatrix} \in \bO_4(S)
\]
where $t\in \cO(S)$. For $v,w \in \HH(\cV)$, the induced automorphism on the Clifford algebra will act as $\Cl(\varphi)(vw)=(Bv)(Bw)$ due to \Cref{action_on_Clifford}, and therefore
\begin{align*}
\Cl(\varphi)(\ell) =& a_0 + a_1v_1(tv_1+v_2) + a_2v_1v_2^* + a_3v_1(tv_2^*+v_1^*) + (1+a_3)(tv_1+v_2)v_2^*\\
&+ a_5(tv_1+v_2)(tv_2^*+v_1^*) + a_6v_2^*(tv_2^*+v_1^*) \\
=& a_0 + a_1v_1v_2 + (a_2+t+t^2a_5)v_1v_2^* + (a_3+ta_5)v_1v_1^* + (1+a_3+ta_5)v_2v_2^*\\
&+ a_5v_2v_1^* + a_6v_2^*v_1^*.
\end{align*}
Thus, we obtain that
\[
\ell-\Cl(\varphi)(\ell) = (t+t^2a_5)v_1v_2^* + ta_5(v_1v_1^* + v_2v_2^*).
\]
According to \Cref{degree_4_Alt}, this element is alternating if and only if $t+t^2a_5 = 0$ for all $t\in \cO$. Choosing $t=1$ yields that $a_5=1$, and therefore $t=t^2$ for all $t \in \cO(S)$. This contradicts our assumption and hence the proof is complete.
\end{proof}

\begin{thm}\label{degree_4_no_canonical}
Let $(\cA,\sigma,f)$ be a quadratic triple of degree $4$. There does not exist a canonical semi-trace on the Clifford algebra $(\Cl(\cA,\sigma,f),\underline{\sigma})$. Precisely, we mean that for any semi-trace
\[
f' \colon \cSym_{(\Cl(\cA,\sigma,f),\underline{\sigma})} \to \cO,
\]
the group homomorphism $\bC' \colon \PGO_{(\cA,\sigma,f)} \to \bAut(\Cl(\cA,\sigma,f),\underline{\sigma})$ of \eqref{eq_twisted_Clifford_actions} does not factor through $\bAut(\Cl(\cA,\sigma,f),\underline{\sigma},f')$.
\end{thm}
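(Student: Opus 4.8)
The plan is to reduce the statement, by base change, to the split quadratic triple over a base ring carrying a non-idempotent element, and then to quote \Cref{no_stable_l}; the degree-$4$ pathology that drives the theorem (too few alternating elements) is already isolated in \Cref{degree_4_Alt} and \Cref{no_stable_l}, so the work here is the reduction together with the dictionary between semi-traces and classes in the quotient module.

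First I would restate what must be proved. As explained at the end of \Cref{quad_forms_and_triples}, the construction of \cite[4.18]{GNR} applies to any algebra that is locally a Cartesian product of Azumaya algebras, so a semi-trace $f'$ on $(\Cl(\cA,\sigma,f),\underline{\sigma})$ is of the form $f' = \Trd_{\Cl(\cA,\sigma,f)}(\lambda_{f'}\cdot\und)$ for a unique class $\lambda_{f'}\in(\Cl(\cA,\sigma,f)/\cAlt_{(\Cl(\cA,\sigma,f),\underline{\sigma})})(S)$, which necessarily satisfies $\xi(\lambda_{f'}) = 1_{\Cl(\cA,\sigma,f)}$. Any algebra automorphism $\psi$ commuting with $\underline{\sigma}$ preserves the reduced trace, hence the trace form, and $\cSym$ and $\cAlt$ are mutually orthogonal for that form by \cite[3.6(iii)]{GNR}; thus $f'\circ\psi = \Trd_{\Cl(\cA,\sigma,f)}(\psi^{-1}(\lambda_{f'})\cdot\und)$ on symmetric elements, and the uniqueness in \cite[4.18]{GNR} gives $f'\circ\psi = f'$ if and only if $\psi$ fixes the class $\lambda_{f'}$. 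So ``$\bC'$ factors through $\bAut(\Cl(\cA,\sigma,f),\underline{\sigma},f')$'' is equivalent to the assertion that $\bC'(\varphi)$ fixes $\lambda_{f'}|_T$ for every $T\in\Sch_S$ and every $\varphi\in\PGO_{(\cA,\sigma,f)}(T)$, and I would assume this and derive a contradiction.

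Next I would pass to the split case over a convenient $T$. We may assume $S$ is non-empty, so the $\PGO_4$-torsor $\cP = \cIsom\big((\cEnd_{\cO}(\HH(\cV)),\sigma_{q_4},f_{q_4}),(\cA,\sigma,f)\big)$ of \Cref{cohomology_twisting} has a section over some non-empty member $T_0$ of an fppf cover of $S$, i.e.\ there is an isomorphism of quadratic triples $g\colon(\cEnd_{\cO}(\HH(\cV)),\sigma_{q_4},f_{q_4})|_{T_0}\iso(\cA,\sigma,f)|_{T_0}$. Put $T = \mathbb A^1_{T_0}$: then $2 = 0\in\cO(T)$, the coordinate $u\in\cO(T) = \cO(T_0)[u]$ satisfies $u^2\neq u$ since $\cO(T_0)\neq 0$, and $g$ base-changes to $T$. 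Transporting $f'|_T$ along $g$ and along the isomorphism $\Phi$ of \Cref{Clifford_iso} — which identifies $\Cl(\cEnd_{\cO}(\HH(\cV)),\sigma_{q_4},f_{q_4})$ with $\Cl_0(\HH(\cV),q_4)$ as algebras with involution — yields a semi-trace $f''$ on $(\Cl_0(\HH(\cV),q_4),\underline{\sigma_4}_0)|_T$. Because $\bC' = \Cl(\und)$ is functorial (\eqref{eq_Clifford_functorial}), hence compatible with conjugation by $g$, the assumption of the previous paragraph applied over $T$ forces the class $\lambda_{f''}$ to be fixed by $\bC'(\varphi)$ for every $\varphi\in\PGO_4(T)$; in particular it is fixed by $\Cl(\Inn(B))$ for every $B\in\bO_4(T)$, by \Cref{action_on_Clifford}.

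Finally I would invoke \Cref{no_stable_l}. From $\xi(\lambda_{f''}) = 1$ and the commutative diagram defining $\xi$, any lift $\ell\in\Cl_0(\HH(\cV),q_4)(T)$ of $\lambda_{f''}$ satisfies $\ell + \underline{\sigma_4}_0(\ell) = 1$. The proof of \Cref{no_stable_l} uses only that $2 = 0$, that there is a non-idempotent element, and the description of the alternating elements in \Cref{degree_4_Alt}, so it applies over $T$ verbatim and exhibits an explicit $\varphi = \Inn(B)$ with $B\in\bO_4(T)$ for which $\ell - \Cl(\varphi)(\ell)$ is not alternating, i.e.\ $\bC'(\varphi)$ does not fix $\lambda_{f''}$ — contradicting the previous paragraph, and hence no such $f'$ exists. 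The only delicate points I anticipate are the two bookkeeping steps — checking that the dictionary ``$\psi$ preserves $f'$'' $\Leftrightarrow$ ``$\psi$ fixes $\lambda_{f'}$'' and the transport of $\bC'$ along $g$ remain valid for an algebra that is merely \emph{locally} a Cartesian product of Azumaya algebras — since the genuine obstruction is entirely packaged in \Cref{no_stable_l}, and the passage to $T = \mathbb A^1_{T_0}$ is just the device that makes that lemma applicable over an arbitrary base with $2 = 0$, including ones such as $\Spec(\mathbb F_2)$ that carry no non-idempotent of their own.
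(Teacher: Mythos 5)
Your proposal follows essentially the same route as the paper: assume the factorization, pass to a split situation over a base where $2=0$ and a non-idempotent section exists, translate ``$\bC'(\varphi)$ preserves the semi-trace'' into ``$\bC'(\varphi)$ stabilizes the class of an element $\ell$ with $\ell+\underline{\sigma}(\ell)=1$'', and then contradict \Cref{no_stable_l}. The only substantive differences are cosmetic: the paper chooses an \emph{affine} splitting scheme $U$ and, if every section of $\cO(U)$ is idempotent, adjoins a root of $x^2-1$ (i.e.\ passes to $U'=\Spec(\cO(U)[x]/\langle x^2-1\rangle)$), whereas you pass to the affine line over a trivializing scheme $T_0$; and the paper gets the element $\ell'$ directly from \cite[4.12]{GNR}, whereas you argue through the class $\lambda_{f'}$ of \cite[4.18]{GNR} and the orthogonality of $\cSym$ and $\cAlt$.

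One point needs patching. \Cref{no_stable_l} takes as input an actual section $\ell\in\Cl_0(\HH(\cV),q_4)(T)$ with $\ell+\underline{\sigma}(\ell)=1$, but your argument only produces the class $\lambda_{f''}$ in the quotient sheaf $\Cl_0/\cAlt$, and you then speak of ``any lift $\ell$'' without ensuring one exists: since $\cAlt$ is an image sheaf, a global section of the quotient need not lift over an arbitrary $T$, and your $T=\mathbb{A}^1_{T_0}$ need not be affine because $T_0$ was only taken to be some member of an fppf cover. This is exactly why the paper insists on an affine base and cites \cite[4.12]{GNR} there. The repair is one line: refine the splitting cover so that $T_0$ (hence $T$) is affine, or restrict to an affine open of $T$ on which $t^2-t$ remains nonzero; after that your argument coincides with the paper's proof.
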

\begin{proof}
Towards a contradiction, assume there does exist a semi-trace $f'$ for which $\bC'$ factors through $\bAut(\Cl(\cA,\sigma,f),\underline{\sigma},f')$. In particular, this means that for all $T\in \Sch_S$ we have a group sheaf morphism
\[
\bC'|_T \colon  \PGO_{(\cA,\sigma,f)}|_T \to \bAut(\Cl(\cA,\sigma,f),\underline{\sigma},f')|_T.
\]
Choose an affine $U \in \Sch_S$ such that $(\cA,\sigma,f)|_U \cong (\Mat_4(\cO|_U),q_4|_U,f_4|_U)$ is split. If $t^2=t$ for all $t\in \cO(U)$, then we instead restrict to the affine scheme
\[
U' = \Spec(\cO(U)[x]/\langle x^2 -1 \rangle)
\]
which has $x\in \cO(U')$ with $x^2 \neq x$. Hence, we may assume $\cO(U)$ contains a section $t$ for which $t^2 \neq t$. However, since $U$ splits $(\cA,\sigma,f)$, the morphism $\bC'|_U$ is of the form
\[
\bC'|_U \colon \PGO_4|_U \to \bAut(\Cl_0(\HH(\cV),q_4)|_U,\underline{\sigma}|_U,f'|_U).
\]
Since $U$ is affine, $f'|_U=\Trd_{\Cl_0(\HH(\cV),q_4)|_U}(\ell' \und)$ for some $\ell' \in \Cl_0(\HH(\cV),q_4)(U)$ with $\ell' + \underline{\sigma}(\ell') = 1$ by \cite[4.12]{GNR}. The fact that $\bC'|_U$ factors through automorphisms fixing $f'|_U$ implies that $\PGO_4(U)$ stabilizes the class
\[
[\ell'] \in (\Cl_0(\HH(\cV),q_4)/\cAlt_{(\Cl_0(\HH(\cV),q_4),\underline{\sigma})})(U).
\]
However, by applying \Cref{no_stable_l} with $S=U$, this is not possible. Hence no such $f'$ exists. 
\end{proof}


\begin{thebibliography}{AAA000}
\bibitem[CF15]{CF} B. Calm\`es, J. Fasel, {\it Groupes classiques}, Autour des sch\'emas en groupes, vol II, Panoramas et Synth\`eses {\bf 46}, 1-133 (2015).

\bibitem[DQ21]{DQ21} A.~Dolphin, A.~Qu\'eguiner-Mathieu, \emph{The Canonical Quadratic Pair on a Clifford Algebra and Triality}, Isr. J. Math. 242, No. 1, 171-213 (2021).

\bibitem[For17]{Ford} T.~J.~Ford, {\it Separable Algebras}, Graduate Studies in Mathematics {\bf 183}, Amer.~Math.~Soc, Provident, RI, (2017).

\bibitem[GNR]{GNR} P.~Gille, E.~Neher, C.~Ruether, \emph{Azumaya Algebras and Obstructions to Quadratic Pairs over a Scheme}, Preprint, version 3 \url{https://arxiv.org/abs/2209.07107} (2023).

\bibitem[Gro65]{Gro} A.~Grothendieck, \emph{Le Groupe de Brauer I}, S\'em. Bourbaki, 1964/65, no 290.

\bibitem[KMRT]{KMRT} M.-A.~Knus, A.~Merkurjev, M.~Rost, J.-P.~Tignol, \emph{The Book of Involutions}, Colloquium Publications. American Mathematical Society (AMS). 44. Providence, RI: AMS. xxi, 593 p. (1998).

\bibitem[Knu91]{Knus} M.-A.~Knus, \emph{Quadratic and Hermitian Forms over Rings}, Grundlehren der Mathematischen Wissenschaften. 294. Berlin etc.: Springer-Verlag. xi, 524 p. (1991).

\bibitem[Stacks]{Stacks} The Stacks Project Authors, \emph{Stacks Project}, \url{https://stacks.math.columbia.edu}.
\end{thebibliography}
\end{document}